\documentclass{article}
\usepackage[a4paper,top=3cm,bottom=2cm,left=3.5cm,right=3.5cm,marginparwidth=4cm]{geometry}
\usepackage{graphicx} 
\usepackage{amsmath}
\usepackage{amsthm}
\usepackage{todonotes}
\usepackage{amssymb}
\usepackage{mathabx}
\usepackage{tikz}
\usepackage{subcaption}
\usepackage{forest}
\usepackage[style=numeric, sorting = nyt,backend=biber, maxbibnames=99]{biblatex}
\usepackage{framed}
\usepackage{tabularx}
\usepackage{xspace}
\usepackage{thm-restate}
\usepackage{authblk}
\usepackage{hyperref}
\usepackage{cleveref}

\newcommand{\yuki}[1]{\textcolor{black}{#1}}
\newcommand{\taka}[1]{\textcolor{black}{#1}}
\newcommand{\balint}[1]{\textcolor{black}{#1}}

\usepackage{tikz}

\newcommand{\treePreliminaries}[1]{
\begin{forest}
for tree={l sep=4mm*#1, s sep=6mm*#1, edge={-}, anchor=center}
[{\,}, circle, draw=black, fill=black, inner sep=0pt, minimum size=6pt
    [{\,}, circle, draw=black, fill=black, inner sep=0pt, minimum size=6pt
        [{\,}, circle, draw=black, fill=black, inner sep=0pt, minimum size=6pt, label=below:1, tier=leaf] 
        [{\,}, circle, draw=black, fill=black, inner sep=0pt, minimum size=6pt, label=below:2, tier=leaf] 
    ]
    [{\,}, circle, draw=black, fill=black, inner sep=0pt, minimum size=6pt
        [{\,}, circle, draw=black, fill=black, inner sep=0pt, minimum size=6pt
            [{\,}, circle, draw=black, fill=black, inner sep=0pt, minimum size=6pt, label=below:3, tier=leaf] 
            [{\,}, circle, draw=black, fill=black, inner sep=0pt, minimum size=6pt, label=below:4, tier=leaf] 
        ]
        [{\,}, circle, draw=black, fill=black, inner sep=0pt, minimum size=6pt, label=below:5, tier=leaf] 
    ]
]
\end{forest}
}

\newcommand{\cherry}[3]{
\begin{forest}
for tree={l sep=4mm*#1, s sep=6mm*#1, edge={-}, anchor=center}
[{\,}, circle, draw=black, fill=black, inner sep=0pt, minimum size=6pt
    [{\,}, circle, draw=black, fill=black, inner sep=0pt, minimum size=6pt, label=below:#2, tier=leaf] 
    [{\,}, circle, draw=black, fill=black, inner sep=0pt, minimum size=6pt, label=below:#3, tier=leaf] 
]
\end{forest}
}

\newcommand{\treeMultiplicationLemma}[1]{
\begin{forest}
for tree={l sep=4mm*#1, s sep=10mm*#1, edge={-}, anchor=center}
[{\,}, circle, draw=black, fill=black, inner sep=0pt, minimum size=6pt
    [{\,}, circle, draw=black, fill=black, inner sep=0pt, minimum size=6pt
        [{\,}, circle, draw=black, fill=black, inner sep=0pt, minimum size=6pt, label=right:$v$
            [{\,}, circle, draw=black, fill=black, inner sep=0pt, minimum size=6pt, label=below:1, tier=leaf]
            [{\,}, circle, draw=black, fill=black, inner sep=0pt, minimum size=6pt, label=below:2, tier=leaf]
        ]
        [{\,}, circle, draw=black, fill=black, inner sep=0pt, minimum size=6pt, tier=leaf, label=below:3]
    ]
    [{\,}, circle, draw=black, fill=black, inner sep=0pt, minimum size=6pt
        [{\,}, circle, draw=black, fill=black, inner sep=0pt, minimum size=6pt, label=below:4, tier=leaf]
        [{\,}, circle, draw=black, fill=black, inner sep=0pt, minimum size=6pt, label=below:5, tier=leaf]
    ]
]
\end{forest}
}

\newcommand{\treeMultiplicationA}[1]{
\begin{forest}
for tree={l sep=4mm*#1, s sep=10mm*#1, edge={-}, anchor=center}
[{\,}, circle, draw=black, fill=black, inner sep=0pt, minimum size=6pt
    [{\,}, circle, draw=black, fill=black, inner sep=0pt, minimum size=6pt
        [{\,}, circle, draw=black, fill=black, inner sep=0pt, minimum size=6pt, label=below:1, tier=leaf]
        [{\,}, circle, draw=black, fill=black, inner sep=0pt, minimum size=6pt, label=below:3, tier=leaf]
    ]
    [{\,}, circle, draw=black, fill=black, inner sep=0pt, minimum size=6pt, label=below:4, tier=leaf]
]
\end{forest}
}

\newcommand{\treeMultiplicationB}[1]{
\begin{forest}
for tree={l sep=4mm*#1, s sep=10mm*#1, edge={-}, anchor=center}
[{\,}, circle, draw=black, fill=black, inner sep=0pt, minimum size=6pt
    [{\,}, circle, draw=black, fill=black, inner sep=0pt, minimum size=6pt, label=below:2, tier=leaf]
    [{\,}, circle, draw=black, fill=black, inner sep=0pt, minimum size=6pt, label=below:4, tier=leaf]
]
\end{forest}
}

\newcommand{\treeSequence}[1]{
\begin{forest}
for tree={l sep=4mm*#1, s sep=8mm*#1, edge={-}, anchor=center}
[{\,}, circle, draw=black, fill=black, inner sep=0pt, minimum size=6pt
    [{\,}, circle, draw=black, fill=black, inner sep=0pt, minimum size=6pt
        [{\,}, circle, draw=black, fill=black, inner sep=0pt, minimum size=6pt
            [{\,}, circle, draw=black, fill=black, inner sep=0pt, minimum size=6pt, label=below:1, tier=leaf]
            [{\,}, circle, draw=black, fill=black, inner sep=0pt, minimum size=6pt, label=below:2, tier=leaf]
        ]
        [{\,}, circle, draw=black, fill=black, inner sep=0pt, minimum size=6pt, tier=leaf, label=below:3]
    ]
    [{\,}, circle, draw=black, fill=black, inner sep=0pt, minimum size=6pt
        [{\,}, circle, draw=black, fill=black, inner sep=0pt, minimum size=6pt, label=below:4, tier=leaf]
        [{\,}, circle, draw=black, fill=black, inner sep=0pt, minimum size=6pt, label=below:5, tier=leaf]
    ]
]
\end{forest}
}

\newcommand{\binaryRefinementOriginal}[1]{
\begin{forest}
for tree={l sep=2mm*#1, s sep=8mm*#1, edge={-}, anchor=center}
[{\,}, circle, draw=black, fill=black, inner sep=0pt, minimum size=6pt, label=$\rho$
    [{\,}, circle, draw=black, fill=black, inner sep=0pt, minimum size=6pt, label =$a$
        [{\,}, circle, draw=black, fill=black, inner sep=0pt, minimum size=6pt, label=below:1, tier=leaf]
        [{\,}, circle, draw=black, fill=black, inner sep=0pt, minimum size=6pt, label=below:2, tier=leaf]
    ]
    [{\,}, circle, draw=black, fill=black, inner sep=0pt, minimum size=6pt, label=below:3, tier=leaf]
    [{\,}, circle, draw=black, fill=black, inner sep=0pt, minimum size=6pt, label=$b$
        [{\,}, circle, draw=black, fill=black, inner sep=0pt, minimum size=6pt, label=below:4, tier=leaf]
        [{\,}, circle, draw=black, fill=black, inner sep=0pt, minimum size=6pt, label=below:5, tier=leaf]
    ]
    [{\,}, circle, draw=black, fill=black, inner sep=0pt, minimum size=6pt, label=below:6, tier=leaf]
    [{\,}, circle, draw=black, fill=black, inner sep=0pt, minimum size=6pt, label = $c$
        [{\,}, circle, draw=black, fill=black, inner sep=0pt, minimum size=6pt, label=below:7, tier=leaf]
        [{\,}, circle, draw=black, fill=black, inner sep=0pt, minimum size=6pt, label=below:8, tier=leaf]
    ]
]
\end{forest}
}

\newcommand{\biniaryRefinementOptimal}[1]{
\begin{forest}
for tree={l sep=4mm*#1, s sep=8mm*#1, edge={-}, anchor=center}
[{\,}, circle, draw=black, fill=black, inner sep=0pt, minimum size=6pt, label=left:$\rho_1$
    [{\,}, circle, draw=black, fill=black, inner sep=0pt, minimum size=6pt, label=left:$\rho_2$
        [{\,}, circle, draw=black, fill=black, inner sep=0pt, minimum size=6pt, label=left:$\rho_3$
            [{\,}, circle, draw=black, fill=black, inner sep=0pt, minimum size=6pt, label=left:$\rho_4$
                [{\,}, circle, draw=black, fill=black, inner sep=0pt, minimum size=6pt
                    [{\,}, circle, draw=black, fill=black, inner sep=0pt, minimum size=6pt, label=below:1, tier=leaf]
                    [{\,}, circle, draw=black, fill=black, inner sep=0pt, minimum size=6pt, label=below:2, tier=leaf]
                ]
                [{\,}, circle, draw=black, fill=black, inner sep=0pt, minimum size=6pt
                    [{\,}, circle, draw=black, fill=black, inner sep=0pt, minimum size=6pt, label=below:4, tier=leaf]
                    [{\,}, circle, draw=black, fill=black, inner sep=0pt, minimum size=6pt, label=below:5, tier=leaf]
                ]
            ]
            [{\,}, circle, draw=black, fill=black, inner sep=0pt, minimum size=6pt
                [{\,}, circle, draw=black, fill=black, inner sep=0pt, minimum size=6pt, label=below:7, tier=leaf]
                [{\,}, circle, draw=black, fill=black, inner sep=0pt, minimum size=6pt, label=below:8, tier=leaf]
            ]
        ]
        [{\,}, circle, draw=black, fill=black, inner sep=0pt, minimum size=6pt, label=below:3, tier=leaf]
    ]
    [{\,}, circle, draw=black, fill=black, inner sep=0pt, minimum size=6pt, label=below:6, tier=leaf]
]
\end{forest}
}

\newcommand{\binaryRefinementSuboptimal}[1]{
\begin{forest}
for tree={l sep=4mm*#1, s sep=8mm*#1, edge={-}, anchor=center}
[{\,}, circle, draw=black, fill=black, inner sep=0pt, minimum size=6pt
    [{\,}, circle, draw=black, fill=black, inner sep=0pt, minimum size=6pt
        [{\,}, circle, draw=black, fill=black, inner sep=0pt, minimum size=6pt
            [{\,}, circle, draw=black, fill=black, inner sep=0pt, minimum size=6pt
                [{\,}, circle, draw=black, fill=black, inner sep=0pt, minimum size=6pt
                    [{\,}, circle, draw=black, fill=black, inner sep=0pt, minimum size=6pt, label=below:1, tier=leaf]
                    [{\,}, circle, draw=black, fill=black, inner sep=0pt, minimum size=6pt, label=below:2, tier=leaf]
                ]
                [{\,}, circle, draw=black, fill=black, inner sep=0pt, minimum size=6pt, label=below:3, tier=leaf]
            ]
            [{\,}, circle, draw=black, fill=black, inner sep=0pt, minimum size=6pt, label=below:4, tier=leaf]
        ]
        [{\,}, circle, draw=black, fill=black, inner sep=0pt, minimum size=6pt
            [{\,}, circle, draw=black, fill=black, inner sep=0pt, minimum size=6pt, label=below:5, tier=leaf]
            [{\,}, circle, draw=black, fill=black, inner sep=0pt, minimum size=6pt, label=below:6, tier=leaf]
        ]
    ]
    [{\,}, circle, draw=black, fill=black, inner sep=0pt, minimum size=6pt
        [{\,}, circle, draw=black, fill=black, inner sep=0pt, minimum size=6pt, label=below:7, tier=leaf]
        [{\,}, circle, draw=black, fill=black, inner sep=0pt, minimum size=6pt, label=below:8, tier=leaf]
    ]
]
\end{forest}
}

\newcommand{\doubleStarSix}[7]{%
\begin{tikzpicture}[
    scale=#7,
    every node/.style={circle,draw=black,fill=black,minimum size=4pt,inner sep=0pt}
]

\node[label={[yshift=-2pt]below:{#1}}] (A) at (0,0) {};
\node[label={[yshift=-2pt]below:{#4}}] (D) at (1.5,0) {};

\node[label={[yshift=-2pt]below:{#2}}] (B) at (-0.9,0.9) {};
\node[label={[yshift=-2pt]below:{#3}}] (C) at (-0.9,-0.9)  {};

\node[label={[yshift=-2pt]below:{#5}}] (E) at (2.4,0.9)  {};
\node[label={[yshift=-2pt]below:{#6}}] (F) at (2.4,-0.9)  {};

\draw (B)--(A);
\draw (C)--(A);
\draw (A)--(D);
\draw (D)--(E);
\draw (D)--(F);

\end{tikzpicture}
}

\newcommand{\pathFour}[5]{%
\begin{tikzpicture}[
    scale=#5,
    every node/.style={circle,draw=black,fill=black,minimum size=4pt,inner sep=0pt}
]

\node[label={[yshift=-2pt]below:{#1}}] (A) at (0,0) {};
\node[label={[yshift=-2pt]below:{#2}}] (B) at (1,0) {};

\node[label={[yshift=-2pt]below:{#3}}] (C) at (2,0) {};
\node[label={[yshift=-2pt]below:{#4}}] (D) at (3,0)  {};

\draw (A)--(B);
\draw (B)--(C);
\draw (C)--(D);

\end{tikzpicture}
}

\newcommand{\subconFive}[6]{%
\begin{tikzpicture}[
    scale=#6,
    every node/.style={circle,draw=black,fill=black,minimum size=4pt,inner sep=0pt}
]

\node[label={[yshift=-2pt]below:{#1}}] (A) at (0,0) {};
\node[label={[yshift=-2pt]below:{#2}}] (B) at (1,0) {};

\node[label={[yshift=-2pt]below:{#3}}] (C) at (2,0) {};
\node[label={[yshift=-2pt]below:{#4}}] (D) at (3,0)  {};

\node[label={[yshift=-2pt]below:{#5}}] (E) at (4,0) {};

\draw (A)--(B);
\draw (B)--(C);
\draw (C)--(D);
\draw (D)--(E);

\end{tikzpicture}
}

\newcommand{\starTreeFive}[1]{
\begin{forest}
for tree={l sep=4mm*#1, s sep=8mm*#1, edge={-}, anchor=center}
[{\,}, circle, draw=black, fill=black, inner sep=0pt, minimum size=4pt, label=above:5
    [{\,}, circle, draw=black, fill=black, inner sep=0pt, minimum size=4pt, label=below:1]
    [{\,}, circle, draw=black, fill=black, inner sep=0pt, minimum size=4pt, label=below:2]
    [{\,}, circle, draw=black, fill=black, inner sep=0pt, minimum size=4pt, label=below:3]
    [{\,}, circle, draw=black, fill=black, inner sep=0pt, minimum size=4pt, label=below:4]
]
\end{forest}
}

\newcommand{\coveringSequencesOriginal}{%
\begin{center}
\begin{tabularx}{\linewidth}{
|>{\centering\arraybackslash\hsize=.6\hsize}X
|>{\centering\arraybackslash\hsize=1.4\hsize}X|}
\hline
$S_1$ & $(2,1)(3,1)$ \\
$S_2$ & $(1,2)(3,2)$ \\
$S_3$ & $(1,2)(2,3)$ \\
$R_1$ & $(5,4)$ \\
$R_2$ & $(4,5)$ \\
\hline
\end{tabularx}
\end{center}
}

\newcommand{\coveringSequencesCombined}{%
\begin{center}
\begin{tabularx}{\linewidth}{
|>{\centering\arraybackslash\hsize=.4\hsize}X
|>{\centering\arraybackslash\hsize=1.6\hsize}X|}
\hline
$D_{1,1}$ & $(2,1)(3,1)(5,4)(1,4)$ \\
$D_{1,2}$ & $(2,1)(3,1)(4,5)(5,1)$ \\
$D_{2,1}$ & $(1,2)(3,2)(5,4)(4,2)$ \\
$D_{2,2}$ & $(1,2)(3,2)(4,5)(2,5)$ \\
$D_{3,1}$ & $(1,2)(2,3)(5,4)(4,3)$ \\
$D_{3,2}$ & $(1,2)(2,3)(4,5)(5,3)$ \\
\hline
\end{tabularx}
\end{center}
}

\newcommand{\tcnCounterexample}{%
\begin{figure}
    \centering
    \begin{subfigure}[b]{0.49\textwidth}
    \centering
    \begin{tikzpicture}[every node/.style = {draw, circle, fill, inner sep = 0pt, minimum size = 2mm},
square/.style = {regular polygon, regular polygon sides = 4, minimum size = 3 mm}]
    \tikzset {edge/.style = {very thick, shorten >= -0.5 pt}}

        \node[] (9) at (-1,-1.5) {};
        \node[] (10) at (-1.25,-2.0) {};

        \node[draw=none, fill=none, left = 5mm of 9] {\large{$T$}};

        \node[] (1) at (-1.5,-2.5) {};
        \node[draw=none, fill=none, below=1mm of 1] (leaf_1) {\large $1$};
        \node[] (2) at (-1.0,-2.5) {};
        \node[draw=none, fill=none, below=1mm of 2] (leaf_2) {\large $2$};
        \node[] (3) at (-0.5,-2.5) {};
        \node[draw=none, fill=none, below=1mm of 3] (leaf_3) {\large $3$};

        \draw[edge] (10) edge (1);
        \draw[edge] (9) edge (10);
        \draw[edge] (10) edge (2);
        \draw[edge] (9) edge (3);
    \end{tikzpicture}
    \end{subfigure}%
    \hfill
    \begin{subfigure}[b]{0.49\textwidth}
    \centering
    \begin{tikzpicture}[every node/.style = {draw, circle, fill, inner sep = 0pt, minimum size = 2mm},
square/.style = {regular polygon, regular polygon sides = 4, minimum size = 3 mm}]
    \tikzset {edge/.style = {very thick, shorten >= -0.5 pt}}

        \node[] (8) at (-0.5,-1.0) {};
        \node[] (9) at (-0.75,-1.5) {};
        \node[] (11) at (-0.25,-1.5) {};
        \node[] (10) at (-1,-2.25) {};
        \node[square] (13) at (0,-2.75) {};
        \node[] (14) at (0,-3.25) {};
        \node[square] (15) at (-1.25,-3.5) {};
        \node[] (16) at (-1.2, -2.8) {};

        \node[draw=none, fill=none, left = 5mm of 8] {\large{$N$}};

        \node[] (1) at (-1.5,-1.75) {};
        \node[draw=none, fill=none, below=1mm of 1] (leaf_1) {\large $1$};
        \node[] (2) at (-1.25,-4.0) {};
        \node[draw=none, fill=none, below=1mm of 2] (leaf_2) {\large $2$};
        \node[] (3) at (0,-4.0) {};
        \node[draw=none, fill=none, below=1mm of 3] (leaf_3) {\large $3$};
        \node[] (4) at (0.5,-1.75) {};
        \node[draw=none, fill=none, below=1mm of 4] (leaf_4) {\large $4$};
        \node[] (5) at (-2,-4.0) {};
        \node[draw=none, fill=none, below=1mm of 5] (leaf_5) {\large $5$};

        \draw[edge, dotted] (8) edge (9);
        \draw[edge, dotted] (8) edge (11);
        \draw[edge, dotted] (9) edge (1);
        \draw[edge, dotted] (9) edge (10);
        \draw[edge, dotted] (10) edge (16);
        \draw[edge] (10) edge (13);
        \draw[edge, dotted] (15) edge (2);
        \draw[edge, dotted] (13) edge (14);
        \draw[edge] (14) edge (15);
        \draw[edge, dotted] (16) edge (15);
        \draw[edge, dotted] (14) edge (3);
        \draw[edge] (11) edge (4);
        \draw[edge] (16) edge (5);
        \draw[edge, dotted] (11) edge (13);
    \end{tikzpicture}
    \end{subfigure}
    \caption{The tree~$T$ is a subnetwork of a tree-child network~$N$, and one embedding of $T$ into $N$ is shown (dotted edges).
    There exists no minimal tree-child sequence of $N$ that reduces $T$ to a single leaf (\Cref{obs:ctc(N)infinite}). For a full explanation, see \Cref{sec:TCOrch}.}
    \label{fig:TCNTreeContainmentCounterExample}
\end{figure}
}

\newcommand{\cpnCounterexample}{
\begin{figure}
    \centering
    \begin{subfigure}[b]{0.49\textwidth}
    \centering
    \begin{tikzpicture}[every node/.style = {draw, circle, fill, inner sep = 0pt, minimum size = 2mm},
square/.style = {regular polygon, regular polygon sides = 4, minimum size = 3 mm}]
    \tikzset {edge/.style = {very thick, shorten >= -0.5 pt}}

        \node[] (0) at (0.0,-0.5) {};
        \node[] (8) at (-0.5,-1.0) {};
        \node[] (11) at (0.5,-1.0) {};
        \node[] (9) at (-1,-1.5) {};
        \node[] (12) at (0.5,-1.5) {};
        \node[] (10) at (-0.75,-2.0) {};

        \node[draw=none, fill=none, left = 5mm of 0] {\large{$T$}};

        \node[] (1) at (-1.5,-2.5) {};
        \node[draw=none, fill=none, below=1mm of 1] (leaf_1) {\large $1$};
        \node[] (2) at (-1.0,-2.5) {};
        \node[draw=none, fill=none, below=1mm of 2] (leaf_2) {\large $2$};
        \node[] (3) at (-0.5,-2.5) {};
        \node[draw=none, fill=none, below=1mm of 3] (leaf_3) {\large $3$};
        \node[] (4) at (0.0,-2.5) {};
        \node[draw=none, fill=none, below=1mm of 4] (leaf_4) {\large $4$};
        \node[] (5) at (0.5,-2.5) {};
        \node[draw=none, fill=none, below=1mm of 5] (leaf_5) {\large $5$};
        \node[] (6) at (1.0,-2.5) {};
        \node[draw=none, fill=none, below=1mm of 6] (leaf_6) {\large $6$};
        \node[] (7) at (1.5,-2.5) {};
        \node[draw=none, fill=none, below=1mm of 7] (leaf_7) {\large $7$};

        \draw[edge] (0) edge (8);
        \draw[edge] (0) edge (11);
        \draw[edge] (8) edge (9);
        \draw[edge] (8) edge (4);
        \draw[edge] (9) edge (1);
        \draw[edge] (9) edge (10);
        \draw[edge] (10) edge (2);
        \draw[edge] (10) edge (3);
        \draw[edge] (11) edge (12);
        \draw[edge] (11) edge (7);
        \draw[edge] (12) edge (5);
        \draw[edge] (12) edge (6);
    \end{tikzpicture}
    \end{subfigure}%
    \hfill
    \begin{subfigure}[b]{0.49\textwidth}
    \centering
    \begin{tikzpicture}[every node/.style = {draw, circle, fill, inner sep = 0pt, minimum size = 2mm},
square/.style = {regular polygon, regular polygon sides = 4, minimum size = 3 mm}]
    \tikzset {edge/.style = {very thick, shorten >= -0.5 pt}}

        \node[] (0) at (0.0,-0.5) {};
        \node[] (8) at (-0.5,-1.0) {};
        \node[] (16) at (0.5,-1.0) {};
        \node[] (9) at (-0.75,-1.5) {};
        \node[] (11) at (-0.25,-1.5) {};
        \node[] (17) at (1,-2.25) {};
        \node[] (10) at (-1,-2.25) {};
        \node[] (12) at (0,-2.25) {};
        \node[square] (13) at (-0.5,-2.75) {};
        \node[square] (18) at (0.5,-2.75) {};
        \node[] (14) at (-0.5,-3.25) {};
        \node[] (19) at (0.5,-3.25) {};
        \node[square] (15) at (-1.25,-3.5) {};
        \node[square] (20) at (1.25,-3.5) {};

        \node[draw=none, fill=none, left = 5mm of 0] {\large{$N$}};

        \node[] (1) at (-2.0,-4.0) {};
        \node[draw=none, fill=none, below=1mm of 1] (leaf_1) {\large $1$};
        \node[] (2) at (-1.25,-4.0) {};
        \node[draw=none, fill=none, below=1mm of 2] (leaf_2) {\large $2$};
        \node[] (3) at (-0.5,-4.0) {};
        \node[draw=none, fill=none, below=1mm of 3] (leaf_3) {\large $3$};
        \node[] (4) at (0.25,-1.75) {};
        \node[draw=none, fill=none, below=1mm of 4] (leaf_4) {\large $4$};
        \node[] (5) at (0.5,-4.0) {};
        \node[draw=none, fill=none, below=1mm of 5] (leaf_5) {\large $5$};
        \node[] (6) at (1.25,-4.0) {};
        \node[draw=none, fill=none, below=1mm of 6] (leaf_6) {\large $6$};
        \node[] (7) at (2.0,-4.0) {};
        \node[draw=none, fill=none, below=1mm of 7] (leaf_7) {\large $7$};

        \draw[edge, dotted] (0) edge (8);
        \draw[edge, dotted] (0) edge (16);
        \draw[edge, dotted] (8) edge (9);
        \draw[edge, dotted] (8) edge (11);
        \draw[edge, bend right, dotted] (9) edge (1);
        \draw[edge, dotted] (9) edge (10);
        \draw[edge] (10) edge (15);
        \draw[edge, dotted] (10) edge (13);
        \draw[edge, dotted] (15) edge (2);
        \draw[edge, dotted] (13) edge (14);
        \draw[edge, dotted] (14) edge (15);
        \draw[edge, dotted] (14) edge (3);
        \draw[edge, dotted] (11) edge (4);
        \draw[edge] (11) edge (12);
        \draw[edge] (12) edge (13);
        \draw[edge] (12) edge (18);
        \draw[edge, dotted] (18) edge (19);
        \draw[edge] (19) edge (20);
        \draw[edge, dotted] (19) edge (5);
        \draw[edge, dotted] (20) edge (6);
        \draw[edge, dotted] (16) edge (17);
        \draw[edge, bend left, dotted] (16) edge (7);
        \draw[edge, dotted] (17) edge (18);
        \draw[edge, dotted] (17) edge (20);
    \end{tikzpicture}
    \end{subfigure}
    \caption{This is a reproduction of Figure~13 in~\cite{cherry_picking_and_network_containment}. The tree~$T$ is a subnetwork of an orchard network~$N$, and one embedding of $T$ into $N$ is shown (dotted edges).
    There exists no minimal sequence of $N$ that reduces $T$ to a single leaf (\Cref{obs:cor(N)infinite}). For a full explanation, see \Cref{sec:TCOrch}.}
    \label{fig:CPNTreeContainmentCounterExample}
\end{figure}
}

\DeclareMathOperator{\cD}{\mathcal{D}}
\DeclareMathOperator{\LCA}{LCA}

\newtheorem{theorem}{Theorem}[section]
\newtheorem{corollary}[theorem]{Corollary}
\newtheorem{lemma}[theorem]{Lemma}

\newtheorem{observation}[theorem]{Observation}

\theoremstyle{definition}

\title{
How many cherry-picking sequences are needed to reduce all subtrees of a phylogenetic tree?\footnote{The third author was supported by the Waseda University Grant for Special Research Projects (Project number: 2026C-088)}
}
\author[1]{Bálint Kollmann\thanks{B.Kollmann@student.tudelft.nl}}
\author[1]{Yukihiro Murakami\thanks{Y.Murakami@tudelft.nl}}
\author[2]{Takatora Suzuki\thanks{takatora.szk@fuji.waseda.jp}}

\affil[1]{Delft Institute of Applied Mathematics, TU Delft, the Netherlands}
\affil[2]{Department of Applied Mathematics, Faculty of Science and Engineering, Waseda University, Japan}
\date{August 20, 2026}

\begin{document}

\maketitle

\begin{abstract}
Phylogenetic networks are graphs that represent the evolutionary history of species. 
Recently, the class of orchard phylogenetic networks, which can be reduced by so-called \emph{cherry-picking sequences}, has gained attention for its computational and biological aspects.
In this paper, we study a fundamental question on orchards and their cherry-picking sequences by considering the \textsc{CoveringNumber} problem: given an orchard network~$N$, how many cherry-picking sequences are needed to reduce all subnetworks of~$N$?
We initiate this study by considering the problem for trees.  
We then show that the covering number can be computed for binary trees recursively using a similar but more fine-grained notion of \emph{survival covering number}.
We also give a recursive formula for the survival covering number of non-binary trees. 
However, computing the covering number for non-binary trees appears to be considerably more challenging.
For this case, we show that the covering number of star trees (whose root is adjacent to all leaves) is equivalent to the so-called {\sc SubsetConnectivity} problem, which we introduce in this paper. 
Finally, we show that if there is no restriction on the sequence length, a single sequence of minimum length~$\binom{n}{2}$ suffices to reduce all subtrees of a tree on~$n$ leaves.
\end{abstract}

\section{Introduction}

Phylogenetic trees have been used to represent the evolutionary history of species since Darwin~\cite{darwin1859origin}. These trees can represent speciation events (one lineage splits into two or more lineages); however, they fail to capture the merging of lineages due to reticulation events such as hybridization and horizontal gene transfer~\cite{Fundamental_work_2013}.
To bridge this gap, a group of mathematicians, computer scientists, and biologists introduced phylogenetic networks (see e.g. ~\cite{huson2010phylogenetic}).

\paragraph{Motivation.}
It is usually the case that biologists can only construct the trees for small stretches of DNA, but not the entire evolutionary history of genomes and species~\cite{gene_trees_vs_species_trees}. Therefore, the evolutionary history of genomes must be inferred by combining the trees of the DNA sequence.
This naturally gives rise to the \textsc{TreeContainment} problem~\cite{cherry_picking_and_network_containment}, which asks whether a phylogenetic tree is displayed by a phylogenetic network. \textsc{TreeContainment} is known to be NP-hard in general~\cite{np_hard}, but it can be solved in polynomial-time for certain classes of networks~\cite{algorithm_geneticly_stable, algorithm_nearly_stable, cherry_picking_and_network_containment}.
For example, \textsc{TreeContainment}, among other techniques, can be solved in linear time for tree-child networks using recently developed cherry-picking techniques \cite{cherry_picking_and_network_containment}. 
Roughly speaking, one iteratively \emph{picks} a lowest subgraph called a \emph{cherry}, until the entire network has been reduced.
Cherry-picking has been studied in the context of edge-coverings~\cite{van2021unifying}, biologically motivated vertex-labellings~\cite{van2022orchard}, and certain linear extensions of topological orderings~\cite{counting_linear_extensions}. 
Cherry-picking techniques scale well with input size when tackling the \textsc{HybridizationNumber} problem (find network with minimum number of reticulations that display all trees) \cite{humphries2013cherry,linz2019attaching,van2022practical}.

The linear-time algorithm in \cite{cherry_picking_and_network_containment} exploits the fact that a tree-child network contains a tree on the same leaf set if and only if any tree-child sequence of the network reduces the tree.
However, if the leaf sets of the tree-child network and the tree are different, then \textsc{TreeContainment} is NP-hard\footnote{by a reduction from general \textsc{TreeContainment}, by adding leaves until the parent network is tree-child. This addition can be done in polynomial-time~\cite{van2023making}.}.
In this case, can we still use cherry-picking?
The reason for writing this paper is to initiate this study by considering the following problem.

\medskip
\begin{center}
\noindent\fbox{\parbox{0.95\linewidth}{
{\sc CoveringNumber}\\
{\bf Input:} A non-binary phylogenetic network $N$ on $n$ leaves and an integer $k$.\\
{\bf Decide:} Does there exist a set of cherry-picking sequences of size at most $k$ that reduces all subtrees of $N$?}}
\end{center}
\medskip

The first step in tackling \textsc{CoveringNumber} is to check the following: Given a network~$N$ and a subtree~$T$, does there exist a sequence of~$N$ which reduces~$T$? 
For some instances, the answer is no.
See for example the orchard network in \Cref{fig:CPNTreeContainmentCounterExample} and the tree-child network in \Cref{fig:TCNTreeContainmentCounterExample} (see \Cref{sec:TCOrch} for elaboration).
This naturally motivates studying the problem first for trees.



\cpnCounterexample
\tcnCounterexample

The \textsc{CoveringNumber} problem was first introduced for trees by Dee \cite{covering_number_conj}. 
In that thesis, Dee solved the problem for caterpillars (i.e., phylogenetic trees where all leaves are adjacent to a central path) and so-called double caterpillars.
Until now, this problem has remained open for more general trees.

\paragraph{Our contributions.}
We resolve the \textsc{CoveringNumber} problem for binary trees.
We give a recursive formula to compute this number (\Cref{thm:CoveringNumber}) --- which requires a notion of the \emph{survival covering number} and can be computed in linear-time in the input size. Survival covering number is similar to covering number, but it ensures that all subtrees under a vertex are propagated upwards in the tree, so these subtrees can be combined with other subtrees and the combined subtrees can be still reduced.
We show that the survival covering number can also be computed in linear-time for non-binary inputs (\Cref{thm:nonbin_surviving_covering_number}).
The proofs for~\Cref{thm:CoveringNumber} and~\Cref{thm:nonbin_surviving_covering_number} are constructive, so they also suggest a way to construct the sequences which (survivably) reduce all subtrees. 
Interestingly, the covering number problem for a specific input non-binary tree (the star) is equivalent to a combinatorial optimization problem, which we call \textsc{SubsetConnectivity} (\Cref{thm:Equiv}).
Finally, we consider the following question: given a tree on~$n$ leaves, find a shortest cherry-picking sequence which reduces all subtrees.
We show that such a sequence is of length~$\binom{n}{2}$, by selecting all the cherries in an order which, roughly speaking, respects the topological ordering (\Cref{thm:OHSBound}).

\paragraph{Layout of the paper.} 
In this article, we first introduce the notations and definitions in~\Cref{sec:preliminaries} that we use in the paper, such as cherry-picking sequences, covering number, and survival covering number. 
In~\Cref{sec:covering-num-binary}, 
we give a recursive formula to compute the covering number of binary trees (\Cref{thm:CoveringNumber}). 
In~\Cref{sec:surviving-covering-num-non-bin}, we derive a recursive formula for the \textit{survival covering number} of non-binary trees (\Cref{thm:nonbin_surviving_covering_number}).
In~\Cref{sec:NonBinCover}, we prove the equivalence between \textsc{CoveringNumber} of non-binary stars and \textsc{SubsetConnectivity}, and give bounds for the \textsc{SubsetConnectivity} problem.
In~\Cref{sec:Hybrid>cover}, we consider a variant of \textsc{CoveringNumber} where we only allow for one sequence of any length.
In \Cref{sec:Discussion}, we give suggestions on future research.
In \Cref{sec:TCOrch}, we consider \textsc{CoveringNumber} for tree-child and orchard networks.

\section{Preliminaries}\label{sec:preliminaries} 

In this section, we introduce definitions and notation used in the paper.
For an integer~$n\in\mathbb{N}$, we write $[n]:=\{1,\ldots, n\}$.
Let~$X$ be a non-empty set. 
By a \emph{phylogenetic network} on~$X$, we mean an acyclic directed graph $(V,E)$ with 
\begin{itemize}
    \item a unique \emph{root} of indegree-$0$;
    \item \emph{tree vertices} of indegree-$1$ and outdegree at least $2$;
    \item \emph{reticulations} of indegree at least~$2$ and outdegree-$1$;
    \item \emph{leaves} of indegree-1 labelled bijectively by~$X$.
\end{itemize}
For brevity, we shall write network to mean a phylogenetic network. 
We say that a vertex is \emph{binary} if it has outdegree-$2$ or indegree-$2$.
We say that a vertex is \emph{non-binary} if it has outdegree at least~$3$ or indegree at least~$3$.
We say that a network is \emph{binary} if the root and all tree vertices have outdegree-2 and all reticulations have indegree-2 (i.e., it contains no non-binary vertices).
We say that a network is \emph{non-binary}, otherwise, to mean that it is `not necessarily binary'.
A network without reticulations is called a \emph{phylogenetic tree}. 
See \Cref{fig:PhyloTree} for an example of a phylogenetic tree on~$\{1,2,3,4,5\}$, together with illustrations of other definitions in this section. 
Again, we call these objects trees (without the phylogenetic), if there is no ambiguity.
In particular, in \Cref{sec:NonBinCover}, we deal with phylogenetic trees and undirected trees simultaneously; there, the distinction will be made clear between the two objects.
Given an edge~$uv$ in a network, we say that~$u$ is the \emph{parent} of~$v$ and that~$v$ is the \emph{child} of~$u$.
For two distinct vertices $u, v$ of a network, we say that $v$ is \emph{below} $u$ if there exists a path from $u$ to $v$; we say \emph{strictly below} if this path is non-trivial.
In this case, we also say that~$u$ is \emph{above} and \emph{strictly above}~$v$, respectively.
\yuki{Two vertices are \emph{incomparable} if neither vertex is above the other.}

\begin{figure}
\centering
\treePreliminaries{1}
\cherry{1}{1}{3}
\cherry{1}{3}{4}
\cherry{1}{2}{5}
\caption{
A phylogenetic tree~$T$ on~$\{1,2,3,4,5\}$, and three of its subtrees $T|_{\{1,3\}}$, $T|_{\{3,4\}}$ and $T|_{\{2,5\}}$.
The subtree~$T|_{\{3,4,5\}}$ is a pendant subtree of~$T$ rooted at the parent of~$5$.
The subtree~$T|_{\{1,3\}}$ is not a pendant subtree. The subtrees $T|_{\{3,4\}}$ and $T|_{\{2,5\}}$ are pairwise survivably irreducible. The subtrees $T|_{\{1,3\}}$ and $T|_{\{2,5\}}$ are pairwise irreducible (and hence pairwise survivably irreducible).
}
\label{fig:PhyloTree}
\end{figure}

For the rest of the section, let~$T=(V,E)$ be a tree on~$X$.
Let~$v\in V$.
We write $L[v]$ to denote all leaves below a vertex $v$; let $s(v) = |L[v]|$.
Let $L[T]$ denote the set of all leaves of tree $T$.
Let $X'$ be a subset of $X$. 
The tree~$T|_{X'}$ is obtained from $T$ by removing all leaves from $T$ whose labels are in $X \setminus X'$, and repeating the following two procedures as long as it is applicable.
\begin{itemize}
    \item Remove a vertex with indegree-$1$ and outdegree-$1$ (\emph{suppress}), and add an edge between its parent and child.
    \item Remove leaves that are unlabeled.
\end{itemize}
We say that the \emph{subtree} $T|_{X'}$ is \emph{induced} by $X'$ (see \Cref{fig:PhyloTree}). 
\taka{We denote the set of all subtrees of a tree $T$ by $\mathcal{T}_T$.}
Consider two sets $P,Q \subseteq \mathcal{T}_T$.
We write $P \times Q = \{T|_{L[T_1] \cup L[T_2]}: T_1\in P, T_2\in Q\}$ to be the set of subtrees obtained by joining the elements of~$P$ and~$Q$.
We call $T[v] := T|_{L[v]}$ the \emph{subtree of $T$ rooted at $v$}.
Such subtrees are also called \emph{pendant subtrees} of $T$ (See \Cref{fig:PhyloTree}).

Let~$V' \subseteq V$ be a set of vertices in~$T$.
The \emph{lowest common ancestor} of $V'$ is the lowest vertex which is above every vertex in $V'$. We denote the lowest common ancestor of a set of vertices $V'$ in $T$ by $\LCA_T(V')$ and denote the lowest common ancestor of vertices $a$ and $b$ in tree $T$ by $\LCA_T(a,b)$.

Let~$T_1,T_2$ be trees on~$X$.
We say~$T_1=(V_1,E_1)$ and~$T_2=(V_2,E_2)$ are \emph{isomorphic}, denoted by~$T_1\cong T_2$, if there is a leaf-label-preserving bijection~$f:V_1\rightarrow V_2$ where~$uv\in E_1$ if and only if~$f(u)f(v)\in E_2$.




\paragraph{Cherry-picking sequences.}

We say that two leaves~$x,y$ form a \emph{cherry}~$(x,y)$ if they have the same parent.
Note that a tree always has at least one cherry.
We \emph{reduce} a cherry~$(x,y)$ in~$T$ by deleting the leaf~$x$ and suppressing any degree-2 vertices in~$T$.
Note that suppression may not occur when considering non-binary networks, e.g., when a vertex is a parent of three leaves~$x,y,z$.
The resulting tree is denoted by~$T(x,y)$.
If~$(x,y)$ is not a cherry of~$T$, then we say that~$(x,y)$ \emph{does not affect}~$T$ and we have~$T(x,y)\cong T$.

Let~$S$ be a sequence of ordered pairs on distinct elements from~$X$.
We say that~$S$ is a \emph{cherry-picking sequence} if the second coordinate of every pair appears either as the first coordinate of a later pair or as the second coordinate of the final pair.
A \emph{subsequence} of $S$ is a sequence obtained from $S$ by deleting zero or more pairs, keeping the remaining pairs in their original order. 
In the rest of the paper, we refer to cherry-picking sequences as \emph{sequences}.

Then~$TS$ is the tree obtained by iteratively applying the elements of~$S$ to~$T$. 
\taka{We say that $S$ \emph{reduces} $T$ if $TS$ is a tree with a single leaf.}
We denote the length of $S$ by $|S|$.
Observe that the shortest cherry-picking sequences that reduce a tree~$T$ contain an element as a first coordinate at most once, and it has length~$|L[T]|-1$ (for more general sequences, see \Cref{sec:TCOrch}).
If $S$ is a cherry-picking sequence of length $|L[T]| - 1$ that reduces $T$, we say that $S$ is a \emph{sequence of} $T$.

\paragraph{Covering Number.}
A set of sequences $\mathcal{S}$ is a \emph{covering set} of a set $\mathcal{T}$ of subtrees of $T$ if each element of $\mathcal{S}$ is a sequence of $T$, and every subtree in $\mathcal{T}$ is reduced by at least one sequence in $\mathcal{S}$.
In this case, we also say that $\mathcal{S}$ \emph{covers} $\mathcal{T}$.
The \emph{covering number} of $T$, written~$c(T)$, is the size of the smallest covering set of $\mathcal{T}_T$. The smallest covering set is called \emph{optimal}.
We write $c_T(v)$ to denote the covering number of the subtree of $T$ rooted at $v$, and simply write~$c(v)$ if there is no ambiguity in the reference tree. 
In particular, $c(T) = c(\rho)$ where $\rho$ is the root of $T$.

We call two subtrees of $T$ \emph{pairwise irreducible}, if there is no sequence of $T$ that reduces both of them. 
We call a set of subtrees of~$T$ a \emph{pairwise irreducible subtree set} if all pairs of trees from the set are pairwise irreducible. 
By definition, a set that contains only one subtree or a set that is empty are also considered to be pairwise irreducible subtree sets.

\medskip
\begin{center}
\noindent\fbox{\parbox{0.95\linewidth}{
{\sc CoveringNumber}\\
{\bf Input:} A phylogenetic tree $T$ on $n$ leaves and an integer $k$.\\
{\bf Decide:} Does there exist a set of cherry-picking sequences of~$T$ of size at most $k$ that covers $T$?}}
\end{center}

\paragraph{Survival Covering Number.}
Let $S$ be a sequence of $T$ and let~$T'$ be a pendant subtree of~$T$. 
Let~$(x_k,y_k)$ be the last element in~$S$ which reduces a cherry in~$T'$.
We call~$y_k$ the \emph{surviving leaf} of~$S$ with respect to~$T'$.
Let $T''$ be a subtree of $T'$ that is reduced by $S$. 
We say that $T''$ is a \emph{surviving subtree} with respect to $S$ and~$T'$, if the remaining leaf in $T''S$ is~$y_k$.
In this case, we say that~$T''$ is \emph{survivably reducible} with respect to~$S$ and~$T'$, or $S$ \emph{survivably reduces} $T''$ with respect to~$T'$.


A set of sequences $\mathcal{S}$ is a \emph{survival covering set} of a set $\mathcal{T}$ of subtrees of $T$ if each element of $\mathcal{S}$ is a sequence of $T$, and every subtree in $\mathcal{T}$ is survivably reducible with respect to at least one sequence in $\mathcal{S}$ and $T$.
In this case, we also say that $\mathcal{S}$ \emph{survivably covers} $\mathcal{T}$.
The \emph{survival covering number} of $T$, written $c'(T)$, is the size of the smallest survival covering set of $\mathcal{T}_T$. The smallest survival covering set is called \emph{optimal}.
We write $c'_T(v)$ to denote the survival covering number of the subtree of $T$ rooted at $v$, and simply write $c'(v)$ if there is no ambiguity in the reference tree.
In particular, $c'(T) = c'(\rho)$ where $\rho$ is the root of $T$.

\balint{We call two subtrees $T_1$ and $T_2$ of $T$ \emph{pairwise survivably irreducible}, if there is no sequence $S$ of $T$ such that $T_1$ and $T_2$ are both surviving subtrees with respect to $S$ and $T$. 
We call a set of subtrees of $T$ a \emph{pairwise survivably irreducible subtree set}, if all pairs of subtrees from the set are pairwise survivably irreducible.
By definition, a set that contains only one subtree or a set that is empty are also considered to be pairwise survivably irreducible subtree sets.
Let $v \in V$.
We write $\psi_v$ to be a pairwise survivably irreducible subtree set of maximum size of the subtrees in $\mathcal{T}_{T[v]}$.
}

Note that if two subtrees~$T_1,T_2$ of~$T$ are pairwise irreducible, then they are also pairwise survivably irreducible.




\section{Covering number of a binary tree}\label{sec:covering-num-binary}

In this section, we give a recursive formula to calculate the covering number of a binary tree.
The main goal of this section is to prove \Cref{thm:CoveringNumber}. 

\begin{restatable}{theorem}{thmCoveringNumber}\label{thm:CoveringNumber}
Let $T$ be a binary tree on $X$ rooted at $\rho$ with children $a$ and $b$, such that $s(a) \ge s(b)$. Then 
\[ c(\rho) = \begin{cases} 
      c'(a)c'(b) & \text{if } s(b) \ge 2 \\
      c'(a) & \text{if } s(b) = 1 \\
      1 & \text{if } s(a) = 1. \\
   \end{cases}
\]
\end{restatable}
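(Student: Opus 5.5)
The plan is to prove matching upper and lower bounds in each case. The case $s(a)=1$ is trivial: $T$ is a cherry, its single sequence reduces every subtree, and $c(\rho)=1$.

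\textbf{Upper bound.} We build covering sets by combining optimal survival covering sets. Let $\mathcal{S}_a$ and $\mathcal{S}_b$ be optimal survival covering sets of $T[a]$ and $T[b]$. For $S_a\in\mathcal{S}_a$ with surviving leaf $y_a$ and $S_b\in\mathcal{S}_b$ with surviving leaf $y_b$, we form the sequence $S_aS_b(y_a,y_b)$, or $S_aS_b(y_b,y_a)$. This is a sequence of $T$ of length $|X|-1$.

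Now take any subtree $T'=T|_{X'}$. If $X'\subseteq L[a]$, choose $S_a$ that survivably reduces $T'|_{L[a]}$; pairs of $S_b$ and the final pair do not affect it, so any combined sequence using $S_a$ reduces $T'$. The case $X'\subseteq L[b]$ is symmetric. If $X'$ meets both sides, pick $S_a$ and $S_b$ that survivably reduce the two restrictions. After the combined sequence runs, $T'$ becomes the cherry on $\{y_a,y_b\}$, and the last pair reduces it.

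This gives $c(\rho)\le c'(a)c'(b)$ when $s(b)\ge2$. When $s(b)=1$, $\mathcal{S}_b$ is the empty sequence with surviving leaf $b$, so $c(\rho)\le c'(a)$.

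\textbf{Lower bound.} When $s(b)=1$, the leaf $b$ is a single leaf. Every sequence $S$ of $T$ induces, by restriction to pairs within $L[a]$, a sequence of $T[a]$. If $S$ reduces $T|_{X''\cup\{b\}}$ for $X''\subseteq L[a]$, then the leaf of $X''$ that finally pairs with $b$ must be the surviving leaf of $S$ with respect to $T[a]$. The reason is that $b$ can only enter a cherry with the last remaining leaf of $L[a]$.

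So $T|_{X''\cup\{b\}}$ is reduced exactly when $T|_{X''}$ is survivably reduced with respect to the restricted sequence and $T[a]$. I need to check this carefully, because elements of $L[a]$ may be picked by $b$ in the middle of the sequence. A covering set of $T$ therefore yields a survival covering set of $T[a]$ of the same size, giving $c(\rho)\ge c'(a)$.

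For $s(b)\ge2$, I would take maximum pairwise survivably irreducible sets $\psi_a$ and $\psi_b$, presumably with $|\psi_v|=c'(v)$ proven earlier or alongside. I then consider the products $T_1\times T_2$ for $T_1\in\psi_a$ and $T_2\in\psi_b$.

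The claim is that these $c'(a)c'(b)$ subtrees are pairwise irreducible in $T$. If one sequence reduced both $T_1\times T_2$ and $T_1'\times T_2'$, then, as above, it would survivably reduce both $T_1$ and $T_1'$ with respect to the restriction to $L[a]$, and likewise on the $b$ side. Since the two pairs differ in at least one coordinate, this contradicts pairwise survivable irreducibility.

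\textbf{Main obstacle.} The main obstacle is the restriction argument. I must show precisely that a sequence of $T$ reducing $T_1\times T_2$ forces $T_1$ to be a surviving subtree with respect to the restricted sequence, with the leaf of $L[a]$ that meets $L[b]$ being the global survivor of $L[a]$. I must also establish $c'(v)=|\psi_v|$, or at least $c'(v)\le|\psi_v|$. If that equality is not available from earlier results, I would prove it by induction on the same recursion, in the style of the non-binary survival formula.
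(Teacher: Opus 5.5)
Your proposal is correct. For the upper bound, and for the lower bound when $s(b)\ge 2$, it follows the paper's argument. Covers are built from products of optimal survival covering sets; the paper reuses the $D_{i,j}$ of \Cref{lem:upper_bound}, and you rightly note that for plain covering the orientation of the final pair is irrelevant, so the $Ord$ bookkeeping can be dropped. The lower bound comes from the $c'(a)c'(b)$ pairwise irreducible products $\psi_a\times\psi_b$ together with $c'(v)=|\psi_v|$, which the paper has already proved as \Cref{thm:surviving_covering_number}.

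The genuine difference is the lower bound for $s(b)=1$. The paper (\Cref{lem:trees_need_to_survive}) again argues by packing: it attaches a leaf of $L[b]$ to each member of $\psi_a$ and invokes \Cref{lem:non-reducible-concat} and \Cref{thm:surviving_covering_number}. You instead map any covering set of $T$ directly to a survival covering set of $T[a]$ by deleting the last pair. This is more elementary and avoids the min--max equality in that case. The paper's route has the merit of being uniform with $s(b)\ge2$. There, your restriction trick alone would only show that the restricted pairs $(S'_a,S'_b)$ jointly cover all products of subtrees of $T[a]$ and $T[b]$. That does not force $c'(a)c'(b)$ sequences without the duality $c'=|\psi|$, so you were right to switch to $\psi$ there.

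Finally, the worry you flag does not arise. A sequence of $T$ has length $|X|-1$, so every pair reduces a cherry of the current tree, and $b$ lies in a cherry only once $T[a]$ has been reduced to one leaf. Hence $b$ occurs only in the final pair. Likewise, for $s(b)\ge2$ the only pair mixing $L[a]$ and $L[b]$ is the last one, which is exactly what justifies your restriction step.
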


To achieve this, we first show characterizations for constructing pairwise survivably irreducible subtree sets in \Cref{subsec:ConstPISubtrees} (\Cref{lem:non-reducible-intersect,lem:non-reducible-concat}). 
In \Cref{subsec:SCNBounds}, we use these pairwise survivably irreducible subtree sets to prove a lower bound for the maximum size of such a set (which is also a lower bound for the survival covering number) in \Cref{lem:lower_bound}. We then show an upper bound for the survival covering number and prove that the lower bound and the upper bound are equal. Finally, we prove the recursive formula for the 
covering number. 

While \Cref{thm:CoveringNumber} is a result on binary trees, we prove some of the lemmas for non-binary trees to be as general as possible.
These more general lemmas will be useful in \Cref{sec:surviving-covering-num-non-bin}.
For the rest of the section, let~$T=(V,E)$ be a tree on~$X$, where~$|X|=n$.


\subsection{Constructing Pairwise Survivably Irreducible Subtrees}\label{subsec:ConstPISubtrees}

We first show that subtrees with no common leaves are not survivably reducible by the same sequence.

\begin{lemma}\label{lem:non-reducible-intersect}
    Let~$T$ be a non-binary tree on $X$.
    Let~$P,Q\subseteq X$ such that~$P\cap Q=\emptyset$.
    Then $T|_P$ and $T|_Q$ are pairwise survivably irreducible.
\end{lemma}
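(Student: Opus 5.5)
The plan is to argue by contradiction, working directly from the definition of a surviving subtree. Suppose some sequence $S$ of $T$ makes both $T|_P$ and $T|_Q$ surviving subtrees with respect to $S$ and $T$. Take $T' = T$, viewed as a pendant subtree of itself (rooted at $\rho$), and let $(x_k,y_k)$ be the last element of $S$ that reduces a cherry of $T$. The surviving leaf is then $y_k$. Being surviving subtrees means that $T|_P S$ has the single remaining leaf $y_k$, and likewise $T|_Q S$ has the single remaining leaf $y_k$.

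The key step is that when a sequence is applied to a subtree $T|_{P}$, the leaf remaining at the end always belongs to $P$. Applying a pair $(x,y)$ to a tree either leaves it unchanged (when the pair does not affect it) or deletes the leaf $x$ and suppresses vertices. So the leaf set of $T|_P S$ is a subset of $P$, and no leaf outside $P$ is ever created. I would state this explicitly, perhaps with a one-line induction on the length of $S$.

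Applying this to both subtrees gives $y_k \in P$ and $y_k \in Q$, so $y_k \in P\cap Q = \emptyset$, a contradiction. Hence no such $S$ exists, and $T|_P$ and $T|_Q$ are pairwise survivably irreducible.

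One subtlety is the degenerate case where $T$ has a single leaf, so that no element of $S$ reduces a cherry and the surviving leaf is undefined. In that case $P$ and $Q$ cannot both be nonempty and disjoint, since $X$ has only one element. A subtree on an empty leaf set is not a genuine tree and can be excluded or handled vacuously. I do not expect a real obstacle; the only care needed is in pinning down that the remaining leaf lies in the original leaf set, and that surviving means equal to the single leaf $y_k$.
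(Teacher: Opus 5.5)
Your proposal is correct and follows the same approach as the paper: the surviving leaf $y_k$ of $S$ with respect to $T$ is unique, and disjointness of $P$ and $Q$ prevents both $T|_P$ and $T|_Q$ from ending at it. The only difference is that you spell out why the remaining leaf of $T|_P S$ must lie in $P$, which the paper leaves implicit.
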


\begin{proof}
    By definition, every sequence with respect to~$T$ has exactly one surviving leaf.
    Since $T|_P$ and $T|_Q$ have no common leaf, the claim follows.
\end{proof}

We now consider two subtrees which may contain common leaves.
Roughly speaking, we show that if they contain subtrees which are pairwise irreducible, then the two original subtrees are pairwise irreducible (\Cref{lem:non-reducible-concat}).
We prove a series of helper lemmas to prove this result.

\begin{lemma}\label{lem:Subsequence}
Let~$T$ be a non-binary tree on $X$, and let~$A\subseteq [n]$, where~$|A|\ge 2$.
Let $S$ be a sequence that reduces $T$.
Then $S$ reduces $T|_A$
if and only if $S$ has a subsequence
$S'$ of size $|A|-1$ that contains only pairs of elements of $A$.
\end{lemma}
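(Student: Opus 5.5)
Both directions will be proved by following how $S$ acts on $T|_A$ step by step. The key tool is a simulation claim: for a pair $(x,y)$ and $A'\subseteq X$ with $x,y\in A'$, if $(x,y)$ is a cherry of $T'$ and $T'|_{A'}$ is the restriction, then $(x,y)$ is a cherry of $T'|_{A'}$ and $(T'(x,y))|_{A'\setminus\{x\}} \cong (T'|_{A'})(x,y)$. The reason is that restriction commutes with leaf deletion and suppression, and two leaves sharing a parent in $T'$ still share a parent after restricting. For pairs $(x,y)$ with $x\notin A'$, deleting $x$ from $T'$ does not change the restriction to $A'$, so $(T'(x,y))|_{A'} \cong T'|_{A'}$.

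With this claim I track the sequence of trees $T_0=T$, $T_i = T_{i-1}(x_i,y_i)$. Let $A_i$ be the set of leaves of $A$ still present in $T_i$, so $A_i = A\setminus\{x_j : j\le i\}$, using that $S$ reduces $T$ and each $x_j$ actually deletes a leaf. I then prove by induction that $T_i|_{A_i} \cong (T|_A)(S_{\le i}')$, where $S'_{\le i}$ consists of the pairs of $S$ up to index $i$ with both coordinates in $A$.
\begin{itemize}
\item If $x_i\notin A_{i-1}$, the second part of the claim shows the restriction is unchanged.
\item If $x_i,y_i\in A_{i-1}$, the first part shows both sides apply the same cherry reduction.
\item If $x_i\in A_{i-1}$ but $y_i\notin A_{i-1}$, then $|A_i|=|A_{i-1}|-1$ while $T|_A$ is not reduced by a pair from $A$. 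I must handle this case separately.
\end{itemize}

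For the forward direction, assume $S$ reduces $T|_A$, i.e.\ $(T|_A)S$ has one leaf. The pairs of $S$ that actually affect the evolving copy of $T|_A$ are cherries of that tree. Each deletes one leaf of $A$, so there are exactly $|A|-1$ of them, all with both coordinates in $A$. These form the required subsequence $S'$. This direction only uses the definition of applying $S$ to $T|_A$ directly, since pairs that are not cherries do not affect it.

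For the converse, assume $S'$ is a subsequence of $|A|-1$ pairs with both coordinates in $A$. Because $S$ is a shortest sequence of $T$, each leaf appears as a first coordinate at most once. Hence the $|A|-1$ first coordinates of $S'$ are distinct elements of $A$, and exactly one element $z$ of $A$ is never deleted by $S'$.

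The main obstacle is showing that each pair of $S'$ is actually a cherry of the current copy of $T|_A$ when it is applied. I plan to use the simulation in $T$. When $(x_i,y_i)\in S'$ is applied, it is a cherry of $T_i$ with both leaves in $A$, and $y_i$ has not been deleted yet. So by the claim, and using the induction hypothesis that all earlier $A$-deletions came from $S'$ (they are exactly the first coordinates in $S'$, since each leaf is deleted once), the pair is a cherry of the corresponding restriction and is reduced there. The third case above cannot occur for the pairs in question: every leaf of $A$ other than $z$ is deleted by a pair of $S'$, and deletion happens only once. So no deletion of an $A$-leaf comes from a pair whose second coordinate lies outside $A$. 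After all of $S'$, only $z$ remains, so $S$ reduces $T|_A$. Throughout, I need care that ``cherry'' in the non-binary setting means sharing a parent, and that restriction preserves this property; that is the point to verify carefully.
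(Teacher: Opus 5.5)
Your forward direction is the same as the paper's: the pairs that actually act on the evolving copy of $T|_A$ form the shortest reducing subsequence. Your converse is organised differently.

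\textbf{Comparison with the paper's converse.} The paper inducts on $|A|$ and shows that the first pair $(x_1,y_1)$ of $S'$ is a cherry of $T|_A$ by contradiction. A leaf $z\in A$ separating $x_1,y_1$ in $T|_A$ also separates them in $T$, so it must have been picked earlier via some $(z,w)$. Fact~2 forces $w\in A$, and then $(z,w)S'$ contradicts Fact~1. The paper then recurses on $A\setminus\{x_1\}$.

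You instead run $S$ forward on $T$ and maintain $T_i|_{A_i}\cong (T|_A)S'_{\le i}$. The tool is that restricting to a set containing both leaves of a cherry keeps it a cherry and commutes with picking it. Both proofs use the same counting input: your remark that the first coordinates of $S'$ are exactly $A\setminus\{z\}$ is the paper's Fact~2. Your version replaces the LCA contradiction and the recursion by a single commutation principle that could be reused elsewhere in the paper, and it makes the surviving leaf of $T|_A$ explicit. The paper's version stays with local LCA reasoning. You are also right to state explicitly that $S$ is a shortest sequence of $T$, so that every pair picks a leaf and each leaf is a first coordinate at most once. The paper uses this only implicitly, in Fact~1.

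\textbf{The step that needs repair.} To exclude the third case, you argue that every leaf of $A$ other than $z$ is picked by a pair of $S'$. From this you conclude that no $A$-leaf is ever picked towards a leaf outside $A$. That argument covers only $A\setminus\{z\}$, and the conclusion is false for $z$. Take the star tree on $\{1,2,3\}$, $A=\{1,2\}$ and $S=(1,2)(2,3)$: then $z=2$, and it is picked by $(2,3)$. Your invariant needs $z$ not to be picked before the last pair of $S'$. Without that, your final claim that only $z$ remains after $S'$ is unsupported.

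The fix is one line. The second coordinate $y_k$ of the last pair of $S'$ is still present when that pair is applied. It is not a first coordinate of any pair of $S'$: an earlier such pair would have deleted it, and the last pair has distinct coordinates. Hence $y_k=z$, and $z$ survives all of $S'$. Any later pick of $z$ is harmless, because $(T|_A)S'$ is already the single leaf $z$. This is exactly why the paper's Fact~2 exempts $y_k$.
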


\begin{proof}
    Suppose first that $S$ reduces $T|_A$.
    Consider the shortest subsequence $S'$ of $S$ that reduces $T|_A$. 
    Each time we reduce a cherry, we remove exactly one leaf. 
    As $T|_AS'$ is a tree on one leaf,~$S'$ removes $|A| - 1$ leaves from $T|_A$, and so $|S'| = |A| - 1$.
    We now show that $S'$ contains only the elements of $A$.
    Note that if there is an element $(x,y)$ in $S'$ such that $|\{x,y\} \cap A| \le 1$, then when applying $S'$ to $T|_A$, the element $(x,y)$ does not affect $T|_A$.  
    Let~$S'\setminus (x,y)$ denote the sequence~$S'$ with the first occurrence of~$(x,y)$ removed.
    But then $S' \setminus (x,y)$ is a shorter subsequence of $S$ that also reduces $T|_A$, which contradicts our choice of $S'$.
    Therefore,~$S'$ contains only pairs of elements of $A$.
    Since $|S'| = |A|-1$ and $S'$ only contains pairs of elements of $A$, the claim follows. 
    \medskip

    For the other direction, suppose now that $S$ has a subsequence $S'$ of size $|A|-1$ that contains only pairs of elements of $A$.
    We prove by induction on the size of $A$, \yuki{that~$S$ reduces~$T|_A$.}
    For the base case, take the set $A = \{x,y\}$.
    By assumption, either $(x,y)$ or $(y,x)$ is an element of $S$. Both of these elements reduce the single cherry $T|_A$.
    
    For the induction step, suppose that the claim holds for all sets $A\subseteq [n]$ with $2 \le|A| \le k$, for $k \ge 2$. We show that the claim is true for any set $A\subseteq [n]$ of size $k+1$.
    By assumption, let $S' = (x_1,y_1)(x_2,y_2)\ldots(x_k,y_k)$ \yuki{be a subsequence of~$S$ of size $|A|-1 = k$ containing only pairs of elements of~$A$.}
    \yuki{We now prove the following facts which will be used in the rest of the proof}.
    
    \paragraph{Fact 1:} There is no sequence $S''$ which contains only pairs of elements of $A$ and $|S''| > |S'|$. 
    Indeed, if one of the elements appears as a first coordinate in $S$, it cannot appear later anywhere in $S$, so the maximum possible length of a sequence that contains only pairs of elements of $A$ is $|A|-1$. 
    
    \paragraph{Fact 2:} There is no pair $(a,b) \in S$ such that $a \in A \setminus \{y_k\}$ and $b \notin A$. Otherwise, there would be at most $|A|-2$ elements that could appear as a first coordinate in $S'$ and this would mean that $|S'| \le |A|-2$.
    \medskip
    
    We claim $(x_1, y_1)$ is a cherry in $T|_A$.
    Suppose $(x_1, y_1)$ is not a cherry in $T|_A$. Then there must exist a leaf $z\in A$ such that $\LCA_{T|_A}(x_1,z)$ or $\LCA_{T|_A}(y_1,z)$ is strictly below $\LCA_{T|_A}(x_1,y_1)$. And because of this and the fact that $T|_A$ is a subtree of $T$, $\LCA_T(x_1,z)$ or $\LCA_T(y_1,z)$ is strictly below $\LCA_T(x_1,y_1)$.
    And since $S$ reduces $T$, $z$ must appear as a first coordinate before the pair $(x_1,y_1)$, because otherwise $(x_1,y_1)$ could not become a cherry in $T$. 
    \yuki{Let~$(z,w)$ denote this element.}
    Note that $z \ne y_k$, \yuki{as~$y_k$ appears in the final element of~$S'$.} 
    By \textbf{Fact 2}, $w \in A$.
    So $(z,w)S'$ is also a sequence that contains only pairs of elements of $A$ and $|(z,w)S'| = |A|$. But this is a contradiction as $|(z,w)S'| \le |A|-1$  by \textbf{Fact 1}.
    Therefore, $(x_1,y_1)$ must be a cherry in $T|_A$.
    
    Let $S'' = S' \setminus (x_1,y_1)$.
    By the induction hypothesis, $S''$ reduces $T|_{A \setminus\{x_1\}}$. And since $(x_1,y_1)$ is a cherry in $T|_A$, $(x_1,y_1)S'' = S'$ reduces $T|_A$.
    \yuki{It follows that~$S$ also reduces~$T|_A$.}
    
    So both directions of the claim are true.
\end{proof}


\begin{lemma}\label{lem:reduce-subtrees}
    Let~$\hat{T}$ be a non-binary tree on $X$ and let~$T$ be a subtree of~$\hat{T}$. 
    Let~$S$ be a sequence of~$\hat{T}$ which reduces~$T$.
    Then $S$ reduces all pendant subtrees of $T$.
\end{lemma}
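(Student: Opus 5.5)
Let $T=\hat{T}|_A$ for some $A\subseteq X$, and let $T'=T[v]$ be a pendant subtree of $T$ with leaf set $B=L[v]\subseteq A$. If $|B|=1$ the claim is trivial, since a single leaf is already reduced. So assume $|B|\ge 2$. The natural tool is \Cref{lem:Subsequence}. Applied to $T$ itself, it gives a subsequence $S'$ of $S$ with $|A|-1$ pairs, all on elements of $A$. By the same lemma, it then suffices to find a subsequence of $S$ with exactly $|B|-1$ pairs, all on elements of $B$. Note also that $\hat{T}|_B = T|_B = T'$, because restriction is transitive and $B=L[v]$ is the leaf set of a pendant subtree. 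So reducing $\hat{T}|_B$ is the same as reducing $T'$.

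To find this subsequence, I will apply $S'$ to $T$ step by step. By the proof of \Cref{lem:Subsequence}, each pair of $S'$ is a cherry of the current tree $T|_{A_i}$ at the moment it is applied, where $A_i$ is the set of leaves still present. The key observation concerns a cherry $(x,y)$ of a restriction $T|_{A_i}$ with $x\in B$: then either $y\in B$, or $B\cap A_i=\{x\}$. Indeed, $B$ is a cluster of $T$, so $B\cap A_i$ is a cluster of $T|_{A_i}$ (or is empty). If $x$ and $y$ share a parent and $y\notin B$, then that cluster cannot contain $x$ together with any other leaf.

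Now let $S''$ consist of the pairs $(x,y)$ of $S'$ with $x\in B$ that are applied while $|B\cap A_i|\ge 2$. By the observation, each such pair has $y\in B$. Each one deletes exactly one leaf of $B$, and the process continues until a single leaf of $B$ remains. Hence $S''$ contains exactly $|B|-1$ pairs, all on elements of $B$, and it is a subsequence of $S'$, and therefore of $S$. \Cref{lem:Subsequence} then gives that $S$ reduces $T|_B=T'$.

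The main obstacle is the cluster observation in the non-binary setting, together with checking that the subtree order of the lemma's induction is what we use. Rather than rely on that ordering, I can avoid it: \Cref{lem:Subsequence} only requires the existence of a subsequence. Every leaf of $B$ except one must appear as a first coordinate in $S'$, since $S'$ deletes all leaves of $A$ but one. It therefore remains only to argue that its partner lies in $B$ while at least two leaves of $B$ remain, which is exactly the cherry argument above.
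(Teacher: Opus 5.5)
Your proof is correct, but it takes a different route from the paper.

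\textbf{The paper's route.} The paper argues by induction on $|L[T]|$. It writes the sequence as $(x,y)S$, with $(x,y)$ a cherry of $T$ and $S$ reducing $T(x,y)$. For a vertex $v$ of $T$, it then applies the induction hypothesis to $T|_{L[v]}$ when $x\notin L[v]$, or to $T|_{L[v]}(x,y)$ when $x\in L[v]$; both are pendant subtrees of $T(x,y)$.

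\textbf{Your route.} You give a direct counting argument instead. You isolate the $|A|-1$ pairs of $S$ that act on $T$ and track $B\cap A_i$. The key step is the cluster fact: a cherry $(x,y)$ of $T|_{A_i}$ with $x\in B$ has $y\in B$ unless $B\cap A_i=\{x\}$. This yields $|B|-1$ pairs on $B$, and the converse direction of \Cref{lem:Subsequence} finishes.

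\textbf{What each buys.} Your cluster fact is exactly what the paper's Case~2 uses without comment: it needs $y\in L[v]$ for $(x,y)$ to be a cherry of $T|_{L[v]}$. Routing through \Cref{lem:Subsequence} also handles the pairs of $S$ that do not touch $T$. The paper's induction, by contrast, tacitly assumes the sequence begins with a cherry of $T$. On the other hand, the paper's argument is self-contained and does not depend on \Cref{lem:Subsequence}.

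\textbf{A small simplification.} You cite the proof of \Cref{lem:Subsequence} for the claim that each pair of $S'$ is a cherry of the current tree when applied. Instead, take $S'$ to be the pairs of $S$ that delete a leaf of the current restriction of $T$. Each such pair is then a cherry by definition, and there are exactly $|A|-1$ of them because $TS$ is a single leaf. The hedge in your last paragraph then becomes unnecessary.
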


\begin{proof}
    

    We prove the claim
    by induction on the number of leaves of $T$.
    For the base case, take $|L[T]| = 2$. In particular $T$ is a cherry $(a,b)$. The only pendant subtrees of $T$ are itself, which is reduced by the sequence $(a,b)$, \yuki{and the singleton trees on leaves~$a$ and~$b$, which are also reduced by~$(a,b)$ (in particular, they are already reduced).}

    Suppose the claim holds for every tree on at most $|L[T]| = k$ leaves. Consider a tree $T$ on $k+1$ leaves. Let $(x,y)$ be a cherry of $T$. Let $S$ be a sequence that reduces $T(x,y)$. Note that $(x,y)S$ reduces $T$.
    Let~$v$ be a vertex in~$T$.
    \paragraph{Case 1: $x\notin L[v]$.}
    In this case, $T|_{L[v]}$ is a pendant subtree of $T(x,y)$. So $(x,y)S$ reduces $T|_{L[v]}$ by induction hypothesis.
    \paragraph{Case 2: $x\in L[v]$.}
    In this case, $T|_{L[v]}(x,y)$ is a pendant subtree of $T(x,y)$. 
    By induction hypothesis,~$T|_{L[v]}(x,y)$ is reduced by~$S$, which means~$T|_{L[v]}$ is reduced by $(x,y)S$.
\end{proof}

\begin{lemma}\label{lem:subtree-must-survive-to-reduce-whole-tree}
    Let $T$ be a non-binary tree on $X$, let $v \in V$ and $A \subseteq L[v]$, and let $B \subseteq X \setminus L[v]$ where $B \neq \emptyset$. 
    Let $S$ be a sequence of $T$ that reduces $T|_{A\cup B}$. 
    Then $T|_A$ is survivably reducible with respect to $S$ and the subtree $T[v]$.
\end{lemma}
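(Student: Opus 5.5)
We have to show two things: that $S$ reduces $T|_A$, and that the leaf remaining in $T|_A S$ is $y_k$, the second coordinate of the last element $(x_k,y_k)$ of $S$ that reduces a cherry in $T[v]$. If $|A|\le 1$ the first part is trivial. In that case we still need the single leaf of $A$ to be $y_k$; the statement presumably intends $|A|\ge 1$, and we treat the remaining-leaf part uniformly below.

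For reducibility we use \Cref{lem:Subsequence}. Since $S$ reduces $T|_{A\cup B}$, it has a subsequence $S'$ of length $|A|+|B|-1$ consisting only of pairs inside $A\cup B$. Applying $S'$ to $T|_{A\cup B}$ removes every leaf except one. Inside $T|_{A\cup B}$, the set $A$ sits below $\LCA(A)$. Every leaf of $B$ lies outside $L[v]$, and $\LCA(A)$ is at or below $v$, so each $b \in B$ satisfies $\LCA(a,b)$ strictly above $\LCA(A)$ for all $a\in A$. Hence $T|_A$ is a pendant subtree of $T|_{A\cup B}$, at least when $|A|\ge 2$; if $|A|=1$ it is a leaf, which is also pendant. \Cref{lem:reduce-subtrees} then gives that $S$ reduces $T|_A$.

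For survivability, let $a^\ast$ be the remaining leaf of $T|_A S$, and let $(x,y)$ be the pair in $S'$ (when applied to $T|_{A\cup B}$) that removes the last leaf of $A$ other than $a^\ast$. After this step, $A$ has collapsed to $a^\ast$. Since $B\neq\emptyset$, the final leaf of $T|_{A\cup B}S$ cannot be forced to be in $A$. In either case, at some later step $a^\ast$ must form a cherry with something in $B$, or a $B$-leaf is removed against $a^\ast$. I want to argue that $a^\ast$ is still present, as a leaf below $v$, at the last moment $S$ touches a cherry of $T[v]$.

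The key claim is that no pair $(a^\ast, w)$ with $w\in L[v]$ occurs in $S$, and that $a^\ast$ is never removed while the tree below $v$ still has other leaves. If $a^\ast$ were the first coordinate of a pair with second coordinate inside $L[v]$, then $a^\ast$ would be deleted by $S$ as applied to $T$. By the counting argument of Fact 1/Fact 2 in the proof of \Cref{lem:Subsequence}, it could then not later pair with a $B$-leaf in $S'$. That contradicts $S'$ having $|A|+|B|-1$ pairs, which forces every element of $A\cup B$ except the final survivor to appear exactly once as a first coordinate within $A\cup B$.

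So in $S$ applied to $T$, $a^\ast$ is either never removed, or removed by a pair $(a^\ast,b)$ with $b\notin L[v]$. Such a pair can only be a cherry of $T$ once $T[v]$ has been reduced to the single leaf $a^\ast$. Hence the last element reducing a cherry within $T[v]$ leaves $a^\ast$ as the sole leaf of $T[v]$, and that element has second coordinate $a^\ast$, i.e.\ $y_k=a^\ast$.

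The main obstacle is this last step. We must make precise that, in the run of $S$ on $T$, the leaf of $L[v]$ that survives the reduction of $T[v]$ coincides with the leaf surviving $T|_A$. In particular we must rule out a different leaf $z\in L[v]\setminus A$ being the final survivor while $a^\ast$ was removed earlier by some pair $(a^\ast,z)$. I would handle this with the Fact 2 style argument: such a pair, with $z\notin A\cup B$, would cost $S'$ a first coordinate and contradict $|S'|=|A\cup B|-1$.
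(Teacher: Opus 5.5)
Your reducibility step (pendant subtree plus \Cref{lem:reduce-subtrees}) is the same as the paper's. The case $w\in A$ of your key claim is also fine: every pair of $S$ inside $A$ must act on $T|_A$, so $(a^\ast,w)$ would delete $a^\ast$ from $T|_A$.

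\textbf{The gap.} It is in the case $w\in L[v]\setminus A$, which you try to close purely by counting (Facts 1 and 2 of \Cref{lem:Subsequence} applied to $A\cup B$). Fact 2 only forbids a pair $(x,z)$ with $z\notin A\cup B$ when $x$ is \emph{not} the final survivor of $T|_{A\cup B}S$. Now suppose $S$ contains $(b',a^\ast)$ with $b'\in B$ and, later, $(a^\ast,z)$ with $z\in L[v]\setminus A$.
\begin{itemize}
\item Then $a^\ast$ is never a first coordinate inside $A\cup B$, so it is the final survivor of the $A\cup B$ reduction.
\item Every counting constraint, and the cherry-picking condition on second coordinates, is satisfied.
\item Your sentence ``it could then not later pair with a $B$-leaf in $S'$'' is vacuous here, because the pairing with $B$ has already happened.
\item Likewise, the pair $(a^\ast,z)$ in your last paragraph does not ``cost $S'$ a first coordinate.''
\end{itemize}
So your argument does not exclude a leaf $z\in L[v]\setminus A$ outliving $a^\ast$ in $T[v]$.

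\textbf{The fix.} What rules out this configuration is the shape of $T$, not counting, and this is exactly the paper's argument. Take the pair of $S$ joining $A$ and $B$, namely $(a,b)$ or $(b,a)$ with $a\in A$, $b\in B$. Because $S$ is a sequence of $T$, this pair is a genuine cherry of the current tree when it is applied. Since $b\notin L[v]$, every other leaf of $L[v]$ must already have been deleted at that moment. Hence no leaf of $L[v]$ other than $a$ occurs in $S$ after this pair, and $a$ cannot have been a first coordinate before it. So $a$ is never picked against a leaf of $L[v]$. It is therefore the remaining leaf of both $T[v]S$ and $T|_AS$, which is precisely survivability.

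You already use this observation for a pair $(a^\ast,b)$ in your penultimate paragraph. Applying it to the orientation $(b,a^\ast)$ as well closes the gap, and it also handles $|A|=1$ uniformly.
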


\begin{proof}
    Note that $T|_A$ is a pendant subtree of $T|_{A\cup B}$, and $S$ reduces $T|_{A\cup B}$ by assumption. 
    So by \Cref{lem:reduce-subtrees}, $S$ also reduces $T|_A$. 
    Note that there must be an element $(a,b)$ or $(b,a)$ in $S$ such that $a \in A$ and $b \in B$, otherwise $S$ would not reduce $T|_{A\cup B}$. 
    
    Suppose first that $(a,b)$ is in $S$.
    \yuki{We wish to show that no element of~$L[v]$ appears in any pair of~$S$ after~$(a,b)$.}
    Suppose there is an element $(x,y)$ or~$(y,x)$ of $S$, such that $x \in L[v], y\in X$, and it appears later in $S$ than $(a,b)$.
    We consider the~$(x,y)$ and the~$(y,x)$ cases simultaneously.
    \yuki{Note that~$x\ne a$ since each leaf can appear as a first coordinate at most once.}
    When we apply $(a,b)$ \yuki{to $T$}, the leaves $a$ and $x$ (\yuki{and possibly~$y$}) are still under $v$. 
    Therefore, the $(a,b)$ in the sequence~$S$ would not affect $T$, which contradicts the assumption that $S$ is a sequence of $T$. 
    So no element of~$L[v]$ appears in any pair of~$S$ after~$(a,b)$. 
    
    By an analogous argument used above, if~$(b,a)$ is an element of $S$ instead of $(a,b)$, then for all elements~$(x,y)$ of~$S$ after~$(b,a)$, if an element of~$L[v]$ appears, then it must be the leaf~$a$.
    
    So in both cases~$(a,b)$ and~$(b,a)$, it follows that~$a$ is the surviving leaf in~$T[v]$ with respect to~$S$.
    By \Cref{lem:reduce-subtrees},~$T[v]$ is reduced by~$S$ as it is a pendant subtree of $T$. 
    Therefore, $a$ must be the surviving leaf of $T[v]$ with respect to $S$ and $T[v]$. 
    Because $a$ is the surviving leaf in $T[v]$ with respect to $S$, $a \in A$, and $S$ reduces $T|_A$, it follows that $T|_A$ is survivably reducible with respect to $S$ and $T[v]$.
\end{proof}

\begin{lemma}\label{lem:non-reducible-concat}
    Let $T$ be a non-binary tree on $X$. Let $v \in V$ and let $A,B \subseteq L[v]$, where $T|_A$ and $T|_B$ are pairwise survivably irreducible. 
    Let $P,Q \subseteq X \setminus L[v]$ be non-empty. 
    Then $T|_{A \cup P}$ and $T|_{B \cup Q}$ are pairwise irreducible (See \Cref{fig:concat-lemma-figure}).
\end{lemma}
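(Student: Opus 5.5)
We argue by contradiction. Suppose some sequence $S$ of $T$ reduces both $T|_{A\cup P}$ and $T|_{B\cup Q}$; we will show that $T|_A$ and $T|_B$ are then both surviving subtrees with respect to a single sequence and a single pendant subtree, contradicting the hypothesis.

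First we apply \Cref{lem:subtree-must-survive-to-reduce-whole-tree} twice. With $v$, $A\subseteq L[v]$ and the non-empty set $P\subseteq X\setminus L[v]$, it shows that $T|_A$ is survivably reducible with respect to $S$ and $T[v]$. With $B$ and $Q$, it shows the same for $T|_B$. Hence both subtrees are reduced by $S$ to the surviving leaf $y_k$ of $S$ with respect to $T[v]$; that is, both are surviving subtrees with respect to $S$ and $T[v]$.

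The main obstacle is a mismatch between what we have and what the hypothesis talks about. Pairwise survivable irreducibility of $T|_A$ and $T|_B$ is phrased with respect to sequences of the ambient tree in which they live. Since $A,B\subseteq L[v]$, we read this as referring to $T[v]$, the natural reading given the definition of $\psi_v$ for $\mathcal{T}_{T[v]}$. So we need a sequence of $T[v]$, not of $T$. We obtain one by restricting $S$: let $S_v$ be the subsequence of $S$ consisting of the pairs with both coordinates in $L[v]$.

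We then check two things about $S_v$.

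\emph{$S_v$ is a sequence of $T[v]$.} By \Cref{lem:reduce-subtrees}, $S$ reduces the pendant subtree $T[v]$ of $T$. The forward direction of \Cref{lem:Subsequence} then gives a subsequence of $S$ of length $s(v)-1$ using only pairs within $L[v]$. Counting first coordinates, as in Fact 1 of that proof, shows that this subsequence is exactly $S_v$. The cherry-picking condition holds because every leaf of $L[v]$ except the survivor is removed as a first coordinate within $S_v$. Since $S_v$ reduces $T[v]$, it has length $s(v)-1$, so it is a sequence of $T[v]$.

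\emph{$S_v$ survivably reduces $T|_A$ and $T|_B$.} Applying \Cref{lem:Subsequence} to $A$ inside $S$, we obtain a reducing subsequence using only pairs within $A\subseteq L[v]$. That subsequence is contained in $S_v$, so the converse direction of \Cref{lem:Subsequence}, applied in $T[v]$, shows that $S_v$ reduces $T|_A$, and likewise $T|_B$. The last pair of $S_v$ is the last pair of $S$ that acts inside $T[v]$, so the surviving leaf of $S_v$ with respect to $T[v]$ equals that of $S$. The leaf remaining in $T|_AS_v$ is the common endpoint of the reducing pairs, the same leaf as in $T|_AS$. It is therefore $y_k$, and similarly for $B$.

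Thus $T|_A$ and $T|_B$ are both surviving subtrees with respect to the single sequence $S_v$ of $T[v]$, contradicting pairwise survivable irreducibility. Hence no such $S$ exists, and $T|_{A\cup P}$ and $T|_{B\cup Q}$ are pairwise irreducible. If the hypothesis is instead read with respect to $T$ itself, the second step already gives the contradiction directly.
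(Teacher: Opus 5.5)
Your argument is correct and follows the paper's proof: two applications of \Cref{lem:subtree-must-survive-to-reduce-whole-tree}, then a contradiction with pairwise survivable irreducibility. Your restriction of $S$ to $S_v$ makes explicit the passage from a sequence of $T$ to a sequence of $T[v]$, which the paper leaves implicit even though it applies the lemma to elements of $\psi_a$. One caveat: your closing remark is only right if ``with respect to $T$'' means sequences of $T$ with survival still measured at $T[v]$; if survival is measured at $T$ itself, the global survivor of $S$ may lie outside $L[v]$, and you would instead need to append to $S_v$ a reduction of the rest of $T$ that never removes $y_k$.
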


\begin{proof}
    If a sequence $S$ of $T$ reduces $T|_{A\cup P}$, then $T|_A$ must be survivably reducible with respect to $S$ and $T[v]$ by \Cref{lem:subtree-must-survive-to-reduce-whole-tree}.
    Similarly, if $S$ reduces $T|_{B \cup Q}$, then $T|_B$ must be survivably reducible with respect to $S$ and $T[v]$ by \Cref{lem:subtree-must-survive-to-reduce-whole-tree}.
    \yuki{But by assumption,~$T|_A$ and~$T|_B$ are pairwise survivably irreducible.}
    \yuki{So at most one of} $T|_A$ and $T|_B$ is survivably reducible with respect to any sequence of $T$ and subtree $T[v]$, and the claim follows.
\end{proof}

\begin{figure}[ht]
    \centering
    \begin{minipage}[b]{0.4\textwidth}
        \centering
        \treeMultiplicationLemma{0.8}
    \end{minipage}
    \begin{minipage}[b]{0.3\textwidth}
        \centering
        \treeMultiplicationA{0.8}
    \end{minipage}
    \begin{minipage}[b]{0.2\textwidth}
        \centering
        \treeMultiplicationB{0.8}
    \end{minipage}
    \caption{An example of a tree $T$ (on the left) and two of its pairwise irreducible subtrees ($T|_{\{1,3,4\}}$ and $T|_{\{2,4\}}$) in \Cref{lem:non-reducible-concat}. Let $A = \{1\}$, $B = \{2\}$, $P = \{3,4\}$ and $Q = \{4\}$. Then $T|_{A \cup P}$ and $T|_{B \cup Q}$ are pairwise irreducible with respect to any sequence of $T$, because either $T|_A$ or $T|_B$ can survive with respect to the subtree $T[v]$ and any sequence of $T$, but not both.}
    \label{fig:concat-lemma-figure}
\end{figure}

\subsection{Bounds on the survival covering number}\label{subsec:SCNBounds}
We start with proving a lower bound on the survival covering number (\Cref{lem:lower_bound}). We do this by constructing a set of pairwise survivably irreducible subtrees using \Cref{lem:non-reducible-intersect,lem:non-reducible-concat}.
Then we prove an upper bound on the survival covering number (\Cref{lem:upper_bound}) by constructing a set of sequences that survivably reduce every subtree of a tree. Finally, we show that the lower and upper bounds are equal (\Cref{thm:surviving_covering_number}, which leads to a recursive formula for the survival covering number (\Cref{cor:surviving_covering_number}).

\yuki{Recall that for a vertex~$v\in V$, we write~$s(v) = |L[v]|$ to denote the number of leaves below~$v$.
We write $\psi_v$ to be a pairwise \balint{survivably} irreducible subtree set of maximum size of the subtrees of $T[v]$. 
If~$S$ survivably reduces a tree in~$\psi_v$ with respect to $T[v]$, then it does not survivably reduce the other trees in~$\psi_v$ with respect to $T[v]$. 
}

\begin{observation}\label{obs:anc-surv-red}
    \balint{Let $T$ be a non-binary tree and let~$u,v$ be vertices where~$u$ is above~$v$. 
    No two elements of $\psi_v$ can be survivably reduced with respect to any sequence of $T[u]$ and $T[u]$.}
\end{observation}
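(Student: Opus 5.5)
The plan is to argue by contradiction and reduce everything to the definition of $\psi_v$ as a pairwise survivably irreducible set with respect to $T[v]$. Suppose some sequence $S$ of $T[u]$ survivably reduces two distinct elements $T_1, T_2 \in \psi_v$ with respect to $T[u]$. By definition both are reduced by $S$, and their remaining leaves $T_1 S$ and $T_2 S$ both equal the surviving leaf $y$ of $S$ with respect to $T[u]$. The aim is to build from $S$ a sequence $S_v$ of $T[v]$ that survivably reduces both $T_1$ and $T_2$ with respect to $T[v]$. That contradicts the choice of $\psi_v$.

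\textbf{Step 1: restrict $S$ to $L[v]$.}
\begin{enumerate}
\item Let $S_v$ be the subsequence of $S$ consisting of the pairs with both coordinates in $L[v]$.
\item Since $T[v]$ is a pendant subtree of $T[u]$, \Cref{lem:reduce-subtrees} shows that $S$ reduces $T[v]$.
\item By \Cref{lem:Subsequence}, $S_v$ has length $s(v)-1$ and uses only leaves of $L[v]$.
\item It remains to check that $S_v$ is a cherry-picking sequence that reduces $T[v]$, i.e.\ that $S_v$ is a sequence of $T[v]$. The idea is that applying $S$ to $T[u]$ acts on the pendant part $T[v]$ exactly as $S_v$ acts on $T[v]$: a pair with both entries in $L[v]$ that is a cherry in the current reduced $T[u]$ is also a cherry in the current reduced $T[v]$. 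Conversely, pairs of $S$ with at most one entry in $L[v]$ never delete a leaf of $L[v]$ until $T[v]$ has shrunk to one leaf.
\end{enumerate}
I expect Step 1 to be the main technical point. In particular, one must make sure the pairs of $S$ mixing $L[v]$ and outside leaves all come after $T[v]$ is reduced; this is the same phenomenon as in the proof of \Cref{lem:subtree-must-survive-to-reduce-whole-tree}. A pair $(a,b)$ with $a \in L[v]$ and $b \notin L[v]$ can only act once $a$ is the sole remaining leaf under $v$.

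\textbf{Step 2: locate the surviving leaf of $S_v$.} Let $y_v$ be the surviving leaf of $S_v$ with respect to $T[v]$, which is also the surviving leaf of $S$ with respect to $T[v]$. Each $T_i$ is a subtree of $T[v]$, and its reductions by $S$ are exactly its reductions by $S_v$, again by \Cref{lem:Subsequence}. So $T_i S_v$ is the single leaf $y$. This leaf $y$ lies in $L[v]$.

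\textbf{Step 3: show $y = y_v$.} After $S_v$ has been applied, only $y_v$ remains of $L[v]$ in the reduced $T[u]$. Every later pair of $S$ touching $L[v]$ involves $y_v$. Hence the final surviving leaf $y$ of $T[u]$, being in $L[v]$, must be $y_v$: any leaf of $L[v]$ other than $y_v$ was deleted during $S_v$.

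\textbf{Conclusion.} Both $T_1$ and $T_2$ are then surviving subtrees with respect to $S_v$ and $T[v]$. This contradicts pairwise survivable irreducibility in $\psi_v$.
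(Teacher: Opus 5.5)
Your proof is correct and follows essentially the same route as the paper: restrict $S$ to the subsequence acting on $T[v]$ (your $S_v$ coincides with the paper's subsequence of elements affecting $T[v]$) and then use the defining property of $\psi_v$. Your Steps 1 and 3 usefully make explicit two points the paper leaves implicit: that this subsequence is a genuine sequence of $T[v]$, because every pair of $S$ reduces a cherry and so mixed pairs can only occur once $L[v]$ is down to one leaf; and that a surviving leaf of $S$ with respect to $T[u]$ that lies in $L[v]$ must be the surviving leaf of $S_v$ with respect to $T[v]$.
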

\begin{proof}
    Let $S$ be a sequence of $T[u]$ and $S'$ be the subsequence of $S$ consisting of every element that affect $T[v]$. Then $S'$ cannot survivably reduce two distinct elements of $\psi_v$. Note that the element of $S$ not in $S'$ do not affect any cheery of $T[v]$, and the claim follows.
\end{proof}

\begin{lemma}\label{lem:lower_bound}
    Let $T$ be a binary tree on $X$ rooted at $\rho$ with children $a$ and $b$, such that $s(a) \ge s(b)$. Then 
\[ c'(\rho) \ge |\psi_\rho| \ge \begin{cases} 
      |\psi_a||\psi_b| & \text{if } s(b) \ge 2 \\
      |\psi_a| + 1 & \text{if } s(b) = 1. \\
   \end{cases}
\]
\end{lemma}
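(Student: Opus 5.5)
The first inequality $c'(\rho)\ge|\psi_\rho|$ is the standard packing argument. Let $\mathcal{S}$ be an optimal survival covering set of $\mathcal{T}_T$. Every tree in $\psi_\rho$ is survivably reducible with respect to some $S\in\mathcal{S}$ and $T$. By definition of a pairwise survivably irreducible set, no single $S$ can do this for two distinct members of $\psi_\rho$. The map sending each tree of $\psi_\rho$ to a sequence that survivably reduces it is therefore injective, so $|\mathcal{S}|\ge|\psi_\rho|$. For the second inequality it suffices to exhibit a pairwise survivably irreducible subtree set of $T$ of the stated size. Write $\psi_a=\{T|_{A_1},\dots,T|_{A_k}\}$ and $\psi_b=\{T|_{B_1},\dots,T|_{B_m}\}$, where all $A_i\subseteq L[a]$ and $B_j\subseteq L[b]$ are non-empty.

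\textbf{Case $s(b)\ge 2$.} I take the family $\{T|_{A_i\cup B_j}: i\in[k], j\in[m]\}$, which is $\psi_a\times\psi_b$. These $km$ trees are distinct because their leaf sets are distinct. Take two distinct index pairs $(i,j)\neq(i',j')$; by symmetry suppose $i\neq i'$. I apply \Cref{lem:non-reducible-concat} with $v=a$, $A=A_i$, $B=A_{i'}$, $P=B_j$ and $Q=B_{j'}$, where $P,Q$ are non-empty subsets of $X\setminus L[a]$. This shows that the two trees are pairwise irreducible, hence pairwise survivably irreducible.

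One subtlety has to be handled here. The hypothesis of \Cref{lem:non-reducible-concat} is survivable irreducibility in the sense of sequences of $T$ and the pendant tree $T[a]$, whereas $\psi_a$ is defined via sequences of $T[a]$. I will bridge this exactly as in \Cref{obs:anc-surv-red}. Restricting a sequence $S$ of $T$ to the pairs that affect $T[a]$ gives a sequence of $T[a]$ with the same surviving leaf in $T[a]$. Consequently two members of $\psi_a$ cannot both be surviving subtrees with respect to $S$ and $T[a]$. The proof of \Cref{lem:non-reducible-concat}, via \Cref{lem:subtree-must-survive-to-reduce-whole-tree}, only uses this property, so it goes through. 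The case $j\neq j'$ is identical with $v=b$.

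\textbf{Case $s(b)=1$.} Now $b$ is a leaf, and taking products would only give $|\psi_a|$. Instead I use the family $\psi_a\cup\{T|_{\{b\}}\}$, of size $|\psi_a|+1$. The key observation concerns any sequence $S$ of $T$. The surviving leaf of $S$ with respect to $T$ is either $b$ or the surviving leaf $y$ of $S$ with respect to $T[a]$. This is because the last pair of $S$ is $(y,b)$ or $(b,y)$, and no leaf of $L[a]$ other than $y$ can occur after the last pair affecting $T[a]$.

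Suppose $T|_{A_i}$ is a surviving subtree with respect to $S$ and $T$. Then the surviving leaf of $T$ lies in $A_i\subseteq L[a]$, so it equals $y$, and $T|_{A_i}$ is also a surviving subtree with respect to $S$ and $T[a]$. Hence two distinct members of $\psi_a$ cannot survive together, again via \Cref{obs:anc-surv-red}. Moreover, $T|_{A_i}$ and $T|_{\{b\}}$ cannot both survive, since one forces the surviving leaf into $L[a]$ and the other forces it to be $b$; alternatively this follows directly from \Cref{lem:non-reducible-intersect}.

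I expect the main obstacle to be the bookkeeping between survivability relative to $T[a]$ or $T[b]$ and relative to $T$, that is, justifying that \Cref{lem:non-reducible-concat} applies to members of $\psi_a$. I will handle it by the restriction-of-sequence argument above. I will also check that a single-leaf subtree counts as trivially reduced, so that $T|_{\{b\}}$ is a legitimate element of the family.
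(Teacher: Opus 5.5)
Your proposal is correct and follows essentially the paper's argument: the packing bound gives $c'(\rho)\ge|\psi_\rho|$, the product family $\psi_a\times\psi_b$ is handled via \Cref{lem:non-reducible-concat}, and the family $\psi_a\cup\psi_b$ (for $s(b)=1$ this is $\psi_a\cup\{T|_{\{b\}}\}$) is handled via \Cref{obs:anc-surv-red} and \Cref{lem:non-reducible-intersect}. Your departures are minor. You treat the $s(b)=1$ case directly rather than bounding by $\max(|\psi_a|+|\psi_b|,\,|\psi_a||\psi_b|)$, and you spell out the restriction-of-sequences argument (survival with respect to $T[a]$ versus $T$) that the paper leaves implicit when it feeds members of $\psi_a$ into \Cref{lem:non-reducible-concat}.
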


\begin{proof}
    Note that $c'(\rho) \ge |\psi_\rho|$, because by definition, no two subtrees can be survivably reduced with respect to a single sequence in $\psi_\rho$ and $T$, so we need a distinct sequence for each of the subtrees in $\psi_\rho$.

    We claim that no two subtrees in $\psi_a \cup \psi_b$ can be survivably reduced with respect to the same sequence and $T$. To prove this, take pairs of elements from $\psi_a \cup \psi_b$ and consider two cases:
    \begin{itemize}
        \item If both elements in the pair come from $\psi_a$ or both of the elements come from $\psi_b$, then they cannot be survivably reduced with respect to the same sequence and $T$ by definition of $\psi_a$ and $\psi_b$, and \Cref{obs:anc-surv-red}.
        \item If the two elements of the pair come from different sets, one from $\psi_a$ and from $\psi_b$, then they have no common leaves, so they are pairwise survivably irreducible by \Cref{lem:non-reducible-intersect}.
    \end{itemize}
    Since no two elements in $\psi_a \cup \psi_b$ can be survivably reduced by the same sequence, $|\psi_\rho| \ge |\psi_a \cup \psi_b| = |\psi_a| + |\psi_b|$.

    We now show that no two elements of $\psi_a \times \psi_b$ can be survivably reduced with respect to the same sequence and $T$.
    Take two non-isomorphic trees $P$ and $Q$ from $\psi_a \times \psi_b$. Let $P_a = T|_{L[a] \cap L[P]}$. Define $P_b$, $Q_a$, and $Q_b$ analogously.
    Since $P \not\cong Q$, we have either $P_a \not\cong Q_a$, or $P_b \not\cong Q_b$. 
    \yuki{Without loss of generality, suppose~$P_a\not\cong Q_a$.}
    \yuki{Since~$P_a,Q_a\in \psi_a$, they are pairwise \balint{survivably} irreducible.
    By \Cref{lem:non-reducible-concat},~$P$ and~$Q$ are also pairwise survivably irreducible, and hence they are not survivably reducible with respect to any sequence of~$T$ and~$T$.}
    \yuki{This holds for all pairs of elements in $\psi_a \times \psi_b$, and so}
    $|\psi_\rho| \ge |\psi_a \times \psi_b| = |\psi_a||\psi_b|$.

    Therefore, $|\psi_\rho| \ge \max(|\psi_a|+|\psi_b|, |\psi_a||\psi_b|)$. Recall that $s(a) \ge s(b)$. We have $|\psi_a| = 1$ if and only if $s(a) = 1$. The same holds for $\psi_b$ and $b$. 
    \yuki{As} $|\psi_a| + |\psi_b| > |\psi_a||\psi_b|$ if and only if $s(b) = 1$,
    we have \[ c'(\rho) \ge |\psi_\rho| \ge \begin{cases} 
      |\psi_a||\psi_b| & \text{if } s(b) \ge 2\\
      |\psi_a| + 1 & \text{if } s(b) = 1.
   \end{cases}
\]
\end{proof}

\yuki{We now give an upper bound on the survival covering number.}

\begin{lemma}\label{lem:upper_bound}
    Let $T$ be a binary tree on $X$ rooted at $\rho$ with children $a$ and $b$, where $s(a) \ge s(b)$. Then 
\[ c'(\rho) \le \begin{cases} 
      c'(a)c'(b) & \text{if } s(b) \ge 2 \\
      c'(a) + 1 & \text{if } s(b) = 1 \\
      2 & \text{if } s(a) = 1. \\
   \end{cases}
\]
\end{lemma}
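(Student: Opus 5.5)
We construct explicit survival covering sets in each of the three cases, building them from optimal survival covering sets of the two children. The basic tool is this. If $S$ is a sequence of $T[a]$ with surviving leaf $y$, and $R$ is a sequence of $T[b]$ with surviving leaf $w$, then $S\,R\,(w,y)$ and $S\,R\,(y,w)$ are sequences of $T$.

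To check this, note that no pair of $R$ affects $T[a]$ and no pair of $S$ affects $T[b]$, since every pair lies entirely inside $L[a]$ or inside $L[b]$. After applying $S$ and $R$, the tree $T$ is the cherry on $\{y,w\}$. The cherry-picking condition holds because $y$ and $w$ are exactly the second coordinates that still need to appear later.

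Along the way we record two facts about $D=S\,R\,(w,y)$ (and its mirror $S\,R\,(y,w)$). First, the surviving leaf of $D$ with respect to $T$ is $y$ (respectively $w$). Second, the surviving leaf of $D$ with respect to $T[a]$ is still $y$, and with respect to $T[b]$ is still $w$, since the last pair affecting each pendant subtree lies in $S$ or $R$.

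Next, let $U$ be a subtree of $T$ that meets both sides, with $A=L[U]\cap L[a]$ and $B=L[U]\cap L[b]$. Suppose $S$ survivably reduces $T|_A$ with respect to $T[a]$ and $R$ survivably reduces $T|_B$ with respect to $T[b]$. Then $D$ reduces $U$ to one of $y,w$ before the final pair and finishes with it. By \Cref{lem:Subsequence}, $D$ reduces $U$, and $U$ survives with respect to $D$ and $T$. A subtree contained in $L[a]$ survives with respect to $D$ and $T$ exactly when it survives with respect to $S$ and $T[a]$ and the final pair is oriented so that $y$ survives. The symmetric statement holds for subtrees contained in $L[b]$.

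\textbf{Case $s(a)=1$.} Then $T$ is a cherry on $\{x,y\}$. The two sequences $(x,y)$ and $(y,x)$ survivably cover $T|_{\{x\}}$, $T|_{\{y\}}$ and $T$, so $c'(\rho)\le 2$.

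\textbf{Case $s(b)=1$.} Let $b=z$ be a leaf and let $S_1,\dots,S_k$ be an optimal survival covering set of $T[a]$, where $k=c'(a)$ and $S_i$ has surviving leaf $y_i$. We take the sequences $S_i(z,y_i)$ for $i\in[k]$, plus the single extra sequence $S_1(y_1,z)$. Subtrees inside $L[a]$ and subtrees containing $z$ together with leaves of $L[a]$ are handled by the observations above. The subtree $T|_{\{z\}}$ survives in the extra sequence. This gives $c'(\rho)\le c'(a)+1$.

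\textbf{Case $s(b)\ge 2$.} Let $S_1,\dots,S_k$ and $R_1,\dots,R_l$ be optimal survival covering sets of $T[a]$ and $T[b]$, with surviving leaves $y_i$ and $w_j$. Here $k,l\ge 2$, because a cherry already needs two sequences. For each pair $(i,j)$ we use exactly one sequence $D_{i,j}$:
\begin{itemize}
\item $D_{i,j}=S_iR_j(w_j,y_i)$ if $i+j$ is even;
\item $D_{i,j}=S_iR_j(y_i,w_j)$ if $i+j$ is odd.
\end{itemize}
These $kl$ sequences cover every subtree meeting both sides, because all pairs $(i,j)$ occur. For a subtree inside $L[a]$ that survives in $S_i$, we need some $j$ with $i+j$ even; this exists since $l\ge 2$. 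Symmetrically, for a subtree inside $L[b]$ that survives in $R_j$, we need some $i$ with $i+j$ odd; this exists since $k\ge 2$. Hence $c'(\rho)\le c'(a)c'(b)$.

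The main obstacle is the careful verification of survival in the general case. One must show that the surviving leaf of $D_{i,j}$ with respect to $T$ is the second coordinate of its last pair. One must also show that the pairs of $R_j$, and the final pair, do not change which leaf of $L[U]\cap L[a]$ remains. Both follow from \Cref{lem:Subsequence} applied to the restricted leaf sets, together with the fact that the pairs of $R_j$ do not touch $L[a]$.
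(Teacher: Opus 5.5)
Your proof is correct and follows the paper's construction: the products $S_iR_j$ closed by a final pair on the two surviving leaves, the extra sequence $S_1(a_1,b)$ when $s(b)=1$, and the two orientations of the cherry when $s(a)=1$. The only difference is that you orient the final pair by the parity of $i+j$ rather than by the paper's test $Ord_a(a_i)=Ord_b(b_j)$; this is a slight simplification, because it needs only $c'(a),c'(b)\ge 2$, whereas the paper's rule also uses $s(a)\ge s(b)$ and the implicit fact that every leaf is the surviving leaf of some sequence in an optimal survival covering set.
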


\begin{proof}
Recall that $\mathcal{T}_T$ denotes the set of all subtrees of a tree $T$.
If~$c'(a)\ge 2$, we let~$\{S_1, S_2, ..., $\\$S_{c'(a)}\}$ be an optimal survival covering set for $\mathcal{T}_{T[a]}$, and let~$a_i$ be the surviving leaf of~$S_i$ with respect to~$T[a]$.
Similarly, if~$c'(b)\ge 2$, we let~$\{R_1, R_2, ..., R_{c'(b)}\}$ be an optimal survival covering set for $\mathcal{T}_{T[b]}$, and let~$b_j$ be the surviving leaf of~$R_j$ with respect to~$T[b]$.
We split into three cases, when $s(b) \ge 2$, $s(b) = 1$ and $s(a) = 1$.

\paragraph{Case 1: $s(b) \ge 2$.}
We refer the reader to \Cref{fig:Sur_upper_bound} for an illustration of the following construction.
Consider the following triplets $(S_1, a_1, P_1),\ldots,(S_{c'(a)}, a_{c'(a)},P_{c'(a)})$ where for each $i \in [c'(a)]$, 
$P_i$ is the set of subtrees that $S_i$ survivably reduces with respect to~$T[a]$.
Define the triplets $(R_1, b_1, Q_1), ..., (R_{c'(b)}, b_{c'(b)}, Q_{c'(b)})$ analogously.
We construct a new set of $c'(a)c'(b)$ sequences of~$T$ using these triplets and show that the constructed sequences form a survival covering set of $\mathcal{T}_T$. 
To do so, we first give an ordering on the leaves below $a$ and $b$. 
Let $v$ be a vertex in $T$. Then we define an ordering on $L[v]$ with respect to $v$ as an arbitrary bijection
\[Ord_v: L[v] \rightarrow \{1, 2, \ldots, s(v)\}.\]
Define a matrix $D$ with $c'(a)$ rows and $c'(b)$ columns, whose $(i,j)$-th element is
\[D_{i,j} = \begin{cases} 
      S_iR_j(a_i,b_j) & \text{if } Ord_a(a_i) = Ord_b(b_j) \\
      S_iR_j(b_j,a_i) & \text{if } Ord_a(a_i) \ne Ord_b(b_j)\\
   \end{cases}
\]
It remains to show that every subtree of $T$ is survivably reduced with respect to one of these $c'(a)c'(b)$ sequences and $T$.
Let~$T'$ be a subtree of~$T$.
There are three subcases to consider: $T'$ has leaves only in~$L[a]$, only in~$L[b]$, or in both~$L[a]$ and~$L[b]$.

\paragraph{Subcase 1: $T'$ has leaves only in~$L[a]$.}
By \yuki{construction,}
there exists an $i\in [c'(a)]$ where $T'$ is survivably reducible with respect to $S_i$ and $T[a]$, with~$a_i$ as the surviving leaf. 
By definition of $Ord_a$ and $Ord_b$, and by assumption that $s(b) \ge 2$, there exists $j\in [s(b)]$ such that $Ord_a(a_i) \neq Ord_b(b_j)$.

\yuki{By construction of the matrix~$D$, we have}
$D_{i,j} = S_iR_j(b_j,a_i)$.
Combining all the arguments above, $T'S_i = T'|_{\{a_i\}}$ is the singleton tree on $a_i$.
\yuki{No element affects singleton trees}, so~$T'S_iR_j(b_j,a_i) = T'|_{\{a_i\}}$.
As~$a_i$ is the second coordinate of the last element of $S_iR_j(b_j,a_i)$, it follows that $T'$ is survivably reducible with respect to $D_{i,j}$ and $T$.

\paragraph{Subcase 2: $T'$ has leaves only in~$L[b]$.}
By \yuki{construction},
there exists a $j\in[c'(b)]$, where $T'$ is survivably reduced with respect to $R_j$ and $T[b]$ \yuki{with~$b_j$ as the surviving leaf}.
Furthermore, $S_i$ does not reduce any cherry in $T'$ for all $i$. By definition of $Ord_a$ and $Ord_b$ and the assumption that $s(a) \ge s(b)$, there exists a $k\in[s(b)]$ such that $Ord_a(a_m) = k$ and $Ord_b(b_j) = k$ for some $m\in[c'(a)]$. 

\yuki{By construction of the matrix~$D$, we have}
$D_{m,j} = S_mR_j(a_m,b_j)$.
Combining all arguments above, we have $T'S_m = T'$; observe that $T'S_mR_j = T'|_{\{b_j\}}$ is the singleton tree on $b_j$ and $T'S_mR_j(a_m,b_j) = T'|_{\{b_j\}}$, so $b_j$ is a surviving leaf with respect to $D_{m,j}$ and~$T$. 
Therefore, $T'$ is survivably reducible with respect to $D_{m,j}$ and $T$.

\paragraph{Subcase 3: $T'$ has leaves in both~$L[a]$ and~$L[b]$.}
Let $T_1 = T|_{L[a]\cap L[T']}$ and
$T_2 = T|_{L[b]\cap L[T']}$.
\yuki{Since~$T_1,T_2$ are subtrees of~$T[a],T[b]$, respectively, there exist~$S_i$ and~$R_j$ such that~$S_i$ survivably reduces~$T_1$ with respect to~$T[a]$, and~$R_j$ survivably reduces~$T_2$ with respect to~$T[b]$.}
Note that $S_i$ only reduces cherries that have both leaves in $T_1$, and~$R_j$ only reduces cherries that have both leaves in $T_2$. 
And $T_1$ and $T_2$ have no common leaves. 
Therefore, \yuki{$T'S_i = T|_{\{a_i\} \cup (L[b]\cap L[T'])}$}, and $T'S_iR_j = T|_{\{a_i, b_j \}}$. 
So $T'S_iR_j$ is the single cherry $(a_i,b_j)$.
This cherry is survivably reducible with respect to both sequences $(a_i,b_j)$ and $(b_j,a_i)$, and the tree $T$. 
By construction,~$D_{i,j} = S_iR_j(b_j,a_i)$ or $D_{i,j} = S_iR_j(a_i,b_j)$.
It follows that~$T'$ is survivably reducible with respect to $D_{i,j}$ and $T$.

\yuki{Therefore, if~$s(b)\ge 2$, the set~$\{D_{i,j}: i\in[c'(a)],j\in[c'(b)]\}$ of sequences of~$T$ is a survival covering set of~$\mathcal{T}_T$.
It follows that~$c'(\rho)\le c'(a)c'(b)$.}

\paragraph{Case 2: $s(b) = 1$.} 
The set of sequences $\{S_1(b,a_1), S_2(b,a_2),\ldots,S_{c'(a)}(b,a_{c'(a)}),$ $ S_1(a_1,b)\}$ survivably covers $\mathcal{T}_T$, because the first $c'(a)$ sequences survivably reduces all elements in $\mathcal{T}_T$ except the subtree on the single leaf $b$, and the last sequence survivably reduces this subtree.
So we have $c'(\rho)\le c'(a) + 1$. 

\paragraph{Case 3: $s(a) = 1$.} The tree~$T$ is a cherry on~$\{a,b\}$. This can be survivably covered by $\{(a,b), (b,a)\}$, 
and it cannot be survivably covered by any set with just one sequence. 
So in this case,~$c'(\rho) = 2$.
\medskip

\noindent Combining the three cases, we have
\[ c'(\rho) \le \begin{cases} 
      c'(a)c'(b) & \text{if } s(b) \ge 2 \\
      c'(a) + 1 & \text{if } s(b) = 1 \\
      2 & \text{if } s(a) = 1. \\
   \end{cases}
\]

\end{proof}

\begin{theorem}\label{thm:surviving_covering_number}
    Let $T$ be a binary tree \yuki{on $X$, where $|X|\ge 2$}. 
    Then $c'(\rho) = |\psi_\rho|$.
\end{theorem}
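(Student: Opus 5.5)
We prove the statement by strong induction on $|X|$, i.e., on the number of leaves below the vertex in question. We show that for every vertex $v$ with $s(v)\ge 2$ we have $c'(v)=|\psi_v|$, and that $c'(v)=|\psi_v|=1$ when $s(v)=1$. The second claim is immediate: $T[v]$ is a single leaf, and the empty sequence survivably reduces it. The inequality $c'(v)\ge|\psi_v|$ always holds, since each sequence survivably reduces at most one member of $\psi_v$; this is the first line of the proof of \Cref{lem:lower_bound}. So it suffices to prove $c'(v)\le|\psi_v|$.

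For the induction, let $\rho$ have children $a,b$ with $s(a)\ge s(b)$. We split into the three cases of \Cref{lem:upper_bound}.

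\textbf{Case $s(a)=1$.} Here $T$ is a cherry and $c'(\rho)=2$ by Case 3 of \Cref{lem:upper_bound}. The two singleton subtrees $T|_{\{a\}}$ and $T|_{\{b\}}$ have disjoint leaf sets, so they are pairwise survivably irreducible by \Cref{lem:non-reducible-intersect}. Hence $|\psi_\rho|\ge 2$, which gives equality.

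\textbf{Case $s(b)\ge 2$.} Note that $s(a),s(b)<|X|$. If a child has at least two leaves, the induction hypothesis gives $c'=|\psi|$ for it; if it has one leaf, both quantities equal $1$. So $c'(a)=|\psi_a|$ and $c'(b)=|\psi_b|$. Combining \Cref{lem:upper_bound} with \Cref{lem:lower_bound},
\[ c'(\rho)\le c'(a)c'(b)=|\psi_a||\psi_b|\le|\psi_\rho|. \]

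\textbf{Case $s(b)=1<s(a)$.} Similarly,
\[ c'(\rho)\le c'(a)+1=|\psi_a|+1\le|\psi_\rho|. \]

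The only delicate point is bookkeeping. The statement of \Cref{lem:lower_bound} involves $\psi_a,\psi_b$ of the children, but the proof of that lemma uses \Cref{obs:anc-surv-red}, i.e., it already relates pairwise survivable irreducibility within $T[a]$ to sequences of the whole tree. So the lower bound applies verbatim at every internal vertex $v$, with $T[v]$ in place of $T$. The induction must therefore be phrased over rooted subtrees $T[v]$, each treated as a binary tree in its own right. Since the survival covering number of $T[v]$ is defined intrinsically, this is consistent, and there is no real obstacle beyond ensuring that the base cases, where a child is a leaf, are handled through the convention $c'=|\psi|=1$.
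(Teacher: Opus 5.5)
Your proof is correct and follows the paper's argument. Both induct on the number of leaves, take the cherry as the base case, and in the cases $s(b)\ge 2$ and $s(b)=1$ sandwich $c'(\rho)$ between \Cref{lem:upper_bound} and \Cref{lem:lower_bound}, using the induction hypothesis on the children. (Your single-leaf convention $c'=|\psi|=1$ is never actually invoked: in both inductive cases the hypothesis is only applied to children with at least two leaves.)
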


\begin{proof}
    We use induction on the number of leaves in $T$.
    Take a cherry $T$, and let~$a,b$ denote the children of~$\rho$. This can be survivably covered with two sequences $(a,b)$ and $(b,a)$. And $\psi_\rho = \{T[a], T[b]\}$. So $c'(\rho) = |\psi_\rho| = 2$, so the base case holds.

    Suppose the claim holds for every tree on at most $k$ leaves, where $k\ge2$. We show that the claim holds for trees on $k+1$ leaves.
    Take any tree $T$ on $k+1$ leaves, and let~$a,b$ denote the children of~$\rho$.
    There are two cases.

    \paragraph{Case 1: $s(b) \ge 2$.}  
    By the induction hypothesis, $c'(a) = |\psi_a|$ and $c'(b) = |\psi_b|$. By \Cref{lem:lower_bound}, $c'(\rho) \ge |\psi_\rho| \ge |\psi_a||\psi_b|$ and by \Cref{lem:upper_bound} $c'(\rho) \le c'(a)c'(b)$. So $c'(a)c'(b) \ge c'(\rho) \ge |\psi_\rho| \ge |\psi_a||\psi_b| = c'(a)c'(b)$, which means $c'(\rho) = |\psi_\rho|$.

    \paragraph{Case 2: $s(b) = 1$.} By the induction hypothesis, $c'(a) = |\psi_a|$.
    By the induction hypothesis, $c'(a) = |\psi_a|$.
    By \Cref{lem:lower_bound}, $c'(\rho) \ge |\psi_\rho| \ge |\psi_a| + 1$ and by \Cref{lem:upper_bound}, $c'(\rho) \le c'(a) + 1$.
    So $c'(a) + 1 \ge c'(\rho) \ge |\psi_\rho| \ge |\psi_a| + 1 = c'(a) + 1$.
    Hence, $c'(\rho) = |\psi_\rho|$.
\end{proof}

\begin{corollary}\label{cor:surviving_covering_number}
    Let $T$ be a binary tree on $X$ rooted at $\rho$ with children $a$ and $b$, such that $s(a) \ge s(b)$. Then 
\[ c'(\rho) = \begin{cases} 
      c'(a)c'(b) & \text{if } s(b) \ge 2 \\
      c'(a) + 1 & \text{if } s(b) = 1 \\
      2 & \text{if } s(a) = 1. \\
   \end{cases}
\]
\end{corollary}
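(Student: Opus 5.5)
The corollary follows by combining \Cref{thm:surviving_covering_number} with \Cref{lem:lower_bound} and \Cref{lem:upper_bound}, applied to both children of $\rho$. I would split into the same three cases as in the statement.

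\textbf{Case $s(a)=1$.} Then $T$ is a cherry. Case 3 of the proof of \Cref{lem:upper_bound} already shows $c'(\rho)=2$: the pair $\{(a,b),(b,a)\}$ survivably covers $\mathcal{T}_T$, while a single sequence has only one surviving leaf, so it cannot survivably reduce both singleton subtrees $T[a]$ and $T[b]$.

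\textbf{Case $s(b)\ge 2$.} Here both pendant subtrees $T[a]$ and $T[b]$ have at least two leaves, so \Cref{thm:surviving_covering_number} applies to them and gives $c'(a)=|\psi_a|$ and $c'(b)=|\psi_b|$. The one point to check is that $\psi_a$ and $c'(a)$ are defined intrinsically on $T[a]$, so the theorem applies to the binary tree $T[a]$ in its own right. Chaining the bounds gives
\[
c'(a)c'(b)\ \ge\ c'(\rho)\ \ge\ |\psi_\rho|\ \ge\ |\psi_a||\psi_b|\ =\ c'(a)c'(b),
\]
where the first inequality is \Cref{lem:upper_bound} and the second and third are \Cref{lem:lower_bound}. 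Equality therefore holds throughout.

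\textbf{Case $s(b)=1$ and $s(a)\ge 2$.} Applying \Cref{thm:surviving_covering_number} to $T[a]$ gives $c'(a)=|\psi_a|$. \Cref{lem:lower_bound} then gives $c'(\rho)\ge |\psi_a|+1=c'(a)+1$, and \Cref{lem:upper_bound} gives $c'(\rho)\le c'(a)+1$, so $c'(\rho)=c'(a)+1$.

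The only subtle point is the scope of \Cref{thm:surviving_covering_number}: it requires at least two leaves. That is why the cases with a single-leaf child cannot invoke it for that child, and are instead handled directly by the explicit bounds.
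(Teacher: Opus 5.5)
Your proposal is correct and follows essentially the paper's route. The paper gives no separate proof of this corollary; it is read off from the equality chains in the proof of \Cref{thm:surviving_covering_number} (\Cref{lem:lower_bound} against \Cref{lem:upper_bound}, with $c'(a)=|\psi_a|$ and $c'(b)=|\psi_b|$), plus Case 3 of \Cref{lem:upper_bound} for the cherry, which is exactly what you do.
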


Now we prove a recursive formula for the covering number. First, we prove that the covering number of a vertex is at least as much as the survival covering number of one of its children (\Cref{lem:trees_need_to_survive}). Then we prove the recursive formula (\Cref{thm:CoveringNumber}). 

\begin{lemma}\label{lem:trees_need_to_survive}
    Let $T$ be a binary tree on $X$, and let~$a,b$ be the children of~$\rho$. 
    Then $c(\rho) \ge c'(a)$.
\end{lemma}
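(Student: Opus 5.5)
Fix an optimal covering set $\mathcal{S}$ of $\mathcal{T}_T$, so $|\mathcal{S}| = c(\rho)$. We will show that $\mathcal{S}$, restricted to $T[a]$, yields a survival covering set of $\mathcal{T}_{T[a]}$ of size at most $|\mathcal{S}|$. This gives $c'(a) \le c(\rho)$.

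The first step is the key observation. Let $T''$ be any subtree of $T[a]$, so $T'' = T|_A$ for some $A \subseteq L[a]$. Pick any leaf $x \in L[b]$; this is possible since $L[b] \neq \emptyset$, and $L[b] = X \setminus L[a]$. The subtree $T|_{A\cup\{x\}}$ lies in $\mathcal{T}_T$, so some $S \in \mathcal{S}$ reduces it. Apply \Cref{lem:subtree-must-survive-to-reduce-whole-tree} with $v = a$ and $B = \{x\}$. It says $T|_A$ is survivably reducible with respect to $S$ and $T[a]$. So every subtree of $T[a]$ is survivably reduced, with respect to $T[a]$, by some sequence of $\mathcal{S}$.

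The second step converts each $S \in \mathcal{S}$ into a sequence of $T[a]$. Let $S|_a$ be the subsequence of $S$ consisting of the pairs that reduce a cherry of $T[a]$ when $S$ is applied to $T$; equivalently, the pairs with both coordinates in $L[a]$ that affect the current tree. Since $S$ reduces $T$, \Cref{lem:reduce-subtrees} shows that $S$ reduces the pendant subtree $T[a]$. Then \Cref{lem:Subsequence} (or a direct argument) shows that $S|_a$ has length $s(a)-1$ and reduces $T[a]$, so it is a sequence of $T[a]$. Because pairs with a coordinate outside $L[a]$, or that do not affect $T[a]$, act trivially on subtrees of $T[a]$, $S|_a$ has the same effect as $S$ on every $T|_A$ with $A \subseteq L[a]$. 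Its last pair is the last pair of $S$ reducing a cherry in $T[a]$, so it has the same surviving leaf with respect to $T[a]$. Hence survivable reducibility with respect to $T[a]$ transfers from $S$ to $S|_a$.

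Putting the two steps together, $\{S|_a : S \in \mathcal{S}\}$ is a survival covering set of $\mathcal{T}_{T[a]}$ with at most $c(\rho)$ elements, so $c'(a) \le c(\rho)$.

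The main obstacle is the bookkeeping in the second step. One must check that the restricted subsequence really is a cherry-picking sequence, meaning the second-coordinate condition holds within $L[a]$. One must also check that the reductions of $T[a]$ occurring inside $T$ coincide with those of $T[a]$ on its own; this holds because $T[a]$ is pendant, so its cherries inside $T$ are exactly its own cherries. If the cherry-picking-sequence condition fails for the raw restriction, I would first try a simple fix: take the pairs of $S$ whose first coordinate lies in $L[a]\setminus\{a^*\}$, where $a^*$ is the surviving leaf. By Fact~2 in the proof of \Cref{lem:Subsequence}, these pairs have second coordinate in $L[a]$, which yields a valid sequence.

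The degenerate case $s(a)=1$ is trivial if $c'$ of a single leaf is taken to be $1$, and the survivability claim is immediate there.
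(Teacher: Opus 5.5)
Your proof is correct, but it argues in the opposite direction from the paper's.

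The paper's proof is a packing argument. It takes a maximum pairwise survivably irreducible set $\psi_a=\{T_1,\ldots,T_{c'(a)}\}$ and attaches one fixed leaf $b_1\in L[b]$ to each member. By \Cref{lem:non-reducible-concat}, the resulting $c'(a)$ subtrees of $T$ are pairwise irreducible, so each needs its own sequence. That route needs the identity $|\psi_a|=c'(a)$ from \Cref{thm:surviving_covering_number} to turn the size of the packing into $c'(a)$.

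You instead project. Any covering set of $T$ becomes, via $S\mapsto S|_a$, a survival covering set of $T[a]$ of no larger size, and \Cref{lem:subtree-must-survive-to-reduce-whole-tree} supplies the survivability. This route is more elementary and more general. It never uses the identity $|\psi_a|=c'(a)$, so it works verbatim for non-binary trees and for any non-root vertex $v$ in place of $a$, giving $c(T)\ge c'_T(v)$. It also makes explicit the passage from sequences of $T$ to sequences of $T[a]$, which the paper only uses implicitly (cf.\ \Cref{obs:anc-surv-red}). The paper's route, in exchange, yields an explicit certificate of $c'(a)$ pairwise irreducible subtrees, in the same style as the $\psi_a\times\psi_b$ lower bound in \Cref{thm:CoveringNumber}.

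Your fallback fix is not needed. Since $S$ has length $n-1$ and reduces $T$, every pair removes a leaf. So $S|_a$ is exactly the set of pairs of $S$ with both coordinates in $L[a]$. For each such pair $(x_i,y_i)$, the leaf $y_i$ is either removed by a later pair of $S|_a$ or is the surviving leaf. Hence the raw restriction already satisfies the cherry-picking condition.
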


\begin{proof}
    Take the set $\psi_a = \{T_1, T_2, \ldots, T_{c'(a)}\}$ \yuki{which are pairwise survivably irreducible. Such a set exists by \Cref{thm:surviving_covering_number}}. 
    Consider the set $\mathcal{Q} = \{T|_{L[T_1] \cup \{b_1\}},$\\$ T|_{L[T_2] \cup \{b_1\}}, \ldots, T|_{L[T_{c(a)'}] \cup \{b_1\}}\}$, where $b_1 \in L[b]$ By \Cref{lem:non-reducible-concat}, no pairs of trees in $\mathcal{Q}$ can be reduced by the same sequence. So we need at least $c'(a)$ sequences to cover all subtrees of $T$. In other words, we need $c(\rho) \ge c'(a)$.
\end{proof}


\thmCoveringNumber*

\begin{proof}[Proof of \Cref{thm:CoveringNumber}]
    Consider the tree $T$, \yuki{where the root~$\rho$ has two children~$a$ and~$b$}.
    We split into three cases. 
    The first case is when $s(a) = 1$. In this case, we have a cherry $(a,b)$. This can be covered with a single sequence, $(a,b)$.

    The second case is when $s(b) = 1$. Consider $T$. Let the survival covering set of $T[a]$ be $\mathcal{S} = \{S_1,S_2,\ldots, S_{c'(a)}\}$, with surviving leaf $a_1, a_2,\ldots, a_{c'(a)}$ respectively.
    Construct the set $\mathcal{Q} = \{S_1(b, a_1), S_2(b, a_2),\ldots S_{c'(a)}(b, a_{c'(a)})\}$. We claim that $\mathcal{Q}$ is an optimal covering set of $T$.
    Let $T'$ be a subtree of $T$. Take the tree $T''$ obtained by removing the leaf $b$ from $T'$. By definition, $T''$ is survivably covered by a sequence $S_i$ from $\mathcal{S}$. If $T' \cong T''$, then $T'$ is also covered by $S_i$ and thus by $S_i(b,a_i)$. If $T' \not\cong T''$, then $T'S_i$ is a cherry $(a_i, b)$ and this cherry can be covered by $(b,a_i)$, so $T'$ is covered by $S_i(b,a_i)$.
    This means $\mathcal{Q}$ is a covering set of $T$.
    Note that $c(\rho) \ge c'(a)$, because of \Cref{lem:trees_need_to_survive}.
    Because $|\mathcal{Q}| = |\mathcal{S}| = c'(a)$, $\mathcal{Q}$ is optimal.

    The third case is when $s(b) \ge 2$. By \yuki{applying} \Cref{lem:non-reducible-concat} \yuki{to $\psi_a\times \psi_b$}, we need at least $c'(a)c'(b)$ sequences. By \Cref{lem:upper_bound}, \yuki{all subtrees of~$T$} can be survivably covered by $c'(a)c'(b)$ sequences.
    And so it can also be covered by $c'(a)c'(b)$ sequences.

    Combining the above, we have \[ c(\rho) = \begin{cases} 
      c'(a)c'(b) & \text{if } s(b) \ge 2 \\
      c'(a) & \text{if } s(b) = 1 \\
      1 & \text{if } s(a) = 1. \\
   \end{cases}
\]
\end{proof}

\begin{figure}[h]
    \centering
    \begin{minipage}{0.3\textwidth}
        \centering
        \treeSequence{0.65}
    \end{minipage}
    \begin{minipage}{0.25\textwidth}
        \centering
        \coveringSequencesOriginal
    \end{minipage}
    \begin{minipage}{0.4\textwidth}
        \centering
        \coveringSequencesCombined
    \end{minipage}
    \caption{Example of constructing~$D_{i,j}$ in \Cref{lem:upper_bound}.
    Given is a tree on~$\{1,2,3,4,5\}$ with~$L[a] = \{1,2,3\}$ and~$L[b] = \{4,5\}$.
    The table in the middle shows~$S_i$ and~$R_j$ for~$i\in[3]$ and~$j\in[2]$; $S_i$ gives a sequence which reduces~$T[a]$ where~$i$ is the surviving leaf.
    In this case,~$P_3$ is the set of subtrees of~$T[a]$ induced by~$\{1,2,3\}$ and~$\{2,3\}$.
    For the table on the right, we use the function~$Ord_a(i) = i;\quad Ord_b(j) = j-3$.
    As an example, since~$Ord_a(2) = 2$ and~$Ord_b(4) = 1$, we have~$D_{2,1} = S_2R_1(4,2)$.
    }
    \label{fig:Sur_upper_bound}
\end{figure}


\section{Survival covering number for non-binary trees}\label{sec:surviving-covering-num-non-bin}

In this section, we try to generalize the results of \Cref{sec:covering-num-binary} for non-binary trees. We prove a recursive formula for the survival covering number in \Cref{thm:nonbin_surviving_covering_number}. To do this, we first prove lemmas that exploit the structure of the binary refinements of trees. However, finding a formula for the (non-surviving) covering number seems very challenging and it is addressed in \Cref{sec:NonBinCover}.

\begin{lemma}\label{lem:pairwise-irreducible-subtree-lower-bound-non-binary}
    Let $T$ be a non-binary tree on $X$ rooted at $\rho$.
    We partition the children of~$\rho$ into two sets.
    Let~$A', A''$ be the set of non-leaf and leaf children of~$\rho$, respectively.  
    Then
     \[ |\psi_\rho| \ge \begin{cases} 
      \prod _{v \in A'}{|\psi_v|} + |A''| & \text{if } A' \neq \emptyset \\
      |A''| & \text{if } A' = \emptyset. \\
   \end{cases}
\]
\end{lemma}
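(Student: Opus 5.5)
We exhibit an explicit pairwise survivably irreducible subtree set of $T=T[\rho]$ of the claimed size. For each non-leaf child $v\in A'$, fix a maximum pairwise survivably irreducible set $\psi_v$ of subtrees of $T[v]$. Define
\[
\mathcal{P} \;=\; \Bigl\{\, T|_{\bigcup_{v\in A'} L[T_v]} \;:\; T_v\in\psi_v \text{ for each } v\in A' \Bigr\},
\]
that is, the iterated join of the sets $\psi_v$. For each leaf child $\ell\in A''$, also take the singleton tree $T|_{\{\ell\}}$. The candidate set is $\mathcal{P}\cup\{T|_{\{\ell\}}:\ell\in A''\}$. Distinct choices of the tuple $(T_v)_{v\in A'}$ give distinct trees, because $T_v$ is recovered by restricting to $L[v]$. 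Hence $|\mathcal{P}|=\prod_{v\in A'}|\psi_v|$. If $A'=\emptyset$, only the $|A''|$ singletons remain.

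We check pairwise survivable irreducibility in three cases.

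\emph{Two singletons, or a singleton and a tree of $\mathcal{P}$.} These have disjoint leaf sets, since elements of $\mathcal{P}$ use only leaves below non-leaf children. Thus \Cref{lem:non-reducible-intersect} applies.

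\emph{Two distinct trees $P,Q\in\mathcal{P}$.} They differ in some coordinate $v\in A'$, so $P_v:=T|_{L[v]\cap L[P]}$ and $Q_v$ are distinct elements of $\psi_v$ and are pairwise survivably irreducible. If $|A'|\ge 2$, then $L[P]\setminus L[v]$ and $L[Q]\setminus L[v]$ are non-empty: each contains leaves below another $u\in A'$, because every element of $\psi_u$ has at least one leaf. \Cref{lem:non-reducible-concat} then shows that $P$ and $Q$ are pairwise irreducible, and hence pairwise survivably irreducible.

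\emph{The case $|A'|=1$.} Here $P=P_v$ and $Q=Q_v$ lie inside $T[v]$, and we must pass from survivable irreducibility with respect to $T[v]$ to survivable irreducibility with respect to $T$. This is \Cref{obs:anc-surv-red} with $u=\rho$. The observation as stated concerns sequences of $T[u]$ and $T[u]$, which with $u=\rho$ is exactly what is needed.

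The step needing most care is the interface between the two notions of survival. Being survivably reduced with respect to $T$ is stronger than being survivably reduced with respect to $T[v]$, since the surviving leaf of $T$ must lie in the tree. So irreducibility relative to $T[v]$ does transfer upward. \Cref{obs:anc-surv-red} records this, but we should verify its argument applies in the non-binary setting. The argument is: restrict the sequence to the pairs affecting $T[v]$; this gives a sequence of $T[v]$ whose surviving leaf is the surviving leaf with respect to $T[v]$. If two trees inside $T[v]$ were both survivably reduced with respect to $T$, they would share the global surviving leaf $y$. That leaf lies in $L[v]$, and after the last pair touching $T[v]$ the tree $T[v]$ has collapsed to $y$. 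Hence both trees would be survivably reduced with respect to $T[v]$, a contradiction.

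For the concatenation case, \Cref{lem:non-reducible-concat} as stated allows non-binary $T$ and arbitrary $P,Q\subseteq X\setminus L[v]$. So it applies directly, with $A=L[P_v]$, $B=L[Q_v]$, and the complementary parts as the outside sets. Collecting the cases gives $|\psi_\rho|\ge\prod_{v\in A'}|\psi_v|+|A''|$, and $|\psi_\rho|\ge |A''|$ when $A'=\emptyset$.
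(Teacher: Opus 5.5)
Your proof is correct and follows the paper's own construction. The paper also takes the join of the sets $\psi_v$ over the non-leaf children together with the singletons on the leaf children, separates elements of the join with \Cref{lem:non-reducible-concat}, and handles the singletons with \Cref{lem:non-reducible-intersect}. Your separate treatment of $|A'|=1$ via \Cref{obs:anc-surv-red} fills a real gap: the paper invokes \Cref{lem:non-reducible-concat} in that case too, even though the lemma's outside sets would then be empty, and you also justify $|\mathcal{P}|=\prod_{v\in A'}|\psi_v|$, which the paper leaves implicit.
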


\begin{proof}
    We split into two cases, when $A' \neq \emptyset$ and $A' = \emptyset$.
    \paragraph{Case 1: $A' \neq \emptyset$.}
    We prove this case by constructing a pairwise \balint{survivably} irreducible subtree set of size $|A''| + \prod_{v\in A'}|\psi_v|$.
    Let $\psi_\rho' = \bigtimes_{x \in A'}\psi_x$.
    Let $P,Q \in \psi_\rho'$, such that $P \not\cong Q$. Then there exists a~$v\in A'$ where $P'$ and $Q'$ are subtrees of $P$ and $Q$ respectively, and $P', Q' \in \psi_v$.
    Therefore, $P$ and $Q$ are pairwise survivably \balint{irreducible} by \Cref{lem:non-reducible-concat} (noting that pairwise irreducibility implies pairwise suvivably irreducibility). 
    So $\psi_\rho'$ is a pairwise \balint{survivably irreducible} subtree set with size $\prod_{x\in A'}|\psi_x| $.
    
    Let $\psi = \psi_\rho' \cup A''$. Take two subtrees $P, Q \in \psi$. There are two cases. The first case is when $P, Q \in \psi_\rho'$. In this case $P$ and $Q$ are pairwise \balint{survivably irreducible}, since $\psi_\rho'$ is a pairwise \balint{survivably irreducible subtree set}.
    The second case is $|\psi_\rho' \cap \{P,Q\}| \le 1$. 
    Then either $P \in A''$ or $Q \in A''$. Therefore, either $P$ or $Q$ is a single leaf. And note that it does not appear in any of the other trees in $\psi$. So, $P$ and $Q$ are pairwise \balint{survivably irreducible} by \Cref{lem:non-reducible-intersect}.
    Finally, if both~$P,Q\in A''$, then the claim follows immediately, since only one leaf can survive per sequence.
    This means that $\psi$ is a pairwise \balint{survivably irreducible subtree set}. For the size, we have $|\psi| = |A''| + \prod_{v\in A'}|\psi_v|$, and the claim follows.
    
    \paragraph{Case 2:  $A' =\emptyset$.} This case immediately follows from the fact that every element of $A''$ is a singleton and \Cref{lem:non-reducible-intersect}.
\end{proof}

\begin{figure}
    \centering
    \begin{minipage}[b]{0.3\textwidth}
        \centering
        \binaryRefinementOriginal{0.25}
    \end{minipage}
    \begin{minipage}[b]{0.3\textwidth}
        \centering
        \biniaryRefinementOptimal{0.25}
    \end{minipage}
    \begin{minipage}[b]{0.3\textwidth}
        \centering
        \binaryRefinementSuboptimal{0.25}
    \end{minipage}
    \caption{Illustration for proof of \Cref{thm:nonbin_surviving_covering_number}. A non-binary phylogenetic tree~$T$ (on the left) with survival covering number~$c(T)$, and two of its binary refinements. One of the refinements (middle) has survival covering number exactly equal to~$c(T)$.
    This refinement is the caterpillarization (of $\rho$) of the left tree with respect to the ordering $(6,3,a,b,c)$ where~$a,b,c$ are the parents of $7,8; 4,5; 1,2$, respectively.
    The other refinement (right), which is not a caterpillarization, has survival covering number strictly larger than~$c(T)$.}
    \label{fig:non-binary-and-its-refinements}
\end{figure}

Let $T$ be a non-binary phylogenetic tree. Let $r\in V(T)$ be a vertex with out-degree at least three, and let $a$ and $b$ be arbitrarily chosen children of $r$. 
We call the following operation a \emph{refinement} at $\{a,b\}$ on $T$: 
\begin{enumerate}
    \item remove edges $(r, a)$ and $(r, b)$ from $T$; and
    \item add a new vertex $r'$ and edges $(r, r'), (r', a)$ and $(r', b)$ to $T$.
\end{enumerate}
If a binary phylogenetic tree $T'$ is obtained by some refinements on a non-binary phylogenetic tree $T$, we call $T'$ a \emph{binary refinement} of $T$ (See \Cref{fig:non-binary-and-its-refinements}). 

\begin{lemma}\label{lemma:subset-of-binary-refinement-is-also-a-binary-refinement}
Let $T$ be a non-binary tree on $X$. Let $T'$ be a binary refinement of $T$. Let $X'\subseteq X$. Then $T'|_{X'}$ is a binary refinement of $T|_{X'}$.
\end{lemma}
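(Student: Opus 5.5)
We argue via the cluster characterisation of phylogenetic trees. Recall that a rooted phylogenetic tree is determined up to isomorphism by its set of clusters $\{L[v] : v\in V\}$. Moreover, a tree $T'$ on $X$ is a binary refinement of $T$ exactly when $T'$ is binary and every cluster of $T$ is a cluster of $T'$. One direction holds because each refinement at $\{a,b\}$ keeps all existing clusters and only adds the new cluster $L[a]\cup L[b]$. For the converse, one repeatedly resolves a non-binary vertex of $T$ along a bipartition of its children that is compatible with the clusters of $T'$, and inducts on the number of missing clusters. We record this equivalence first, with a short proof by induction on the number of refinements.

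The main step is to describe the clusters of a restricted tree. For any tree $U$ on $X$ and $X'\subseteq X$, the non-empty clusters of $U|_{X'}$ are exactly the non-empty sets $C\cap X'$ with $C$ a cluster of $U$. The reason is that deleting leaves and suppressing vertices of in- and out-degree one never creates a new leaf-set below a surviving vertex. Each surviving vertex $w$ of $U|_{X'}$ corresponds to a vertex of $U$ with $L_{U|_{X'}}[w]=L_U[w]\cap X'$. Conversely, every non-empty $L_U[v]\cap X'$ equals $L[w]$ for the highest surviving vertex $w$ below $v$, which is the $\LCA$ of $L_U[v]\cap X'$.

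Applying this to both trees, every cluster $C\cap X'$ of $T|_{X'}$, with $C$ a cluster of $T$, is also of the form $C\cap X'$ for $C$ a cluster of $T'$, since $C$ is a cluster of $T'$ as well. So every cluster of $T|_{X'}$ is a cluster of $T'|_{X'}$. It remains to show that $T'|_{X'}$ is binary, which holds because restriction preserves binarity. A surviving vertex of outdegree $\ge 2$ had outdegree exactly $2$ in $T'$. After restriction, each child edge either survives, possibly via a suppressed path, or disappears, and if one disappears the vertex gets suppressed. Hence outdegrees stay in $\{0,2\}$ and the root remains outdegree $2$ whenever $|X'|\ge 2$. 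The edge cases $|X'|\le 1$ are trivial. By the characterisation above, $T'|_{X'}$ is then a binary refinement of $T|_{X'}$.

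The main obstacle is stating the cluster description of restrictions carefully: suppression chains have to be shown to collapse exactly to $\LCA$ vertices, and the equivalence between refinements and cluster containment needs checking, in particular the converse direction, which requires constructing an explicit sequence of refinement operations. The converse can be done greedily. While $T$ differs from $T'$, choose a cluster $D$ of $T'$ that is not in $T$ and is minimal with respect to inclusion. Its maximal proper subclusters in $T'$ are clusters of $T$ (by minimality and binarity there are exactly two, say $L[a]$ and $L[b]$), and $a$ and $b$ are siblings under a common non-binary vertex. Refining at $\{a,b\}$ creates $D$, and we repeat.
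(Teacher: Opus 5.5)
Your proof is correct, but it takes a different route from the paper.

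\textbf{The paper's route.} The paper argues locally along a fixed refinement sequence $T=T_0,T_1,\dots,T_k=T'$. Restricting to $X'$ turns each single refinement at $\{a,b\}$ into one of two things. It becomes nothing when $L[a]$ or $L[b]$ misses $X'$, or when no other child of $r$ meets $X'$, so that $r$ is suppressed. Otherwise it becomes a single refinement of $T_{i-1}|_{X'}$.

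\textbf{Your route.} You argue globally. Binary refinements of $T$ are exactly the binary trees whose cluster set contains that of $T$, and restriction acts on clusters by $C\mapsto C\cap X'$. The lemma then reduces to binarity being preserved under restriction.

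\textbf{Comparison.} The paper's route is more elementary. It directly yields a refinement sequence of length at most $k$ for the restricted trees. However, it leaves two points implicit: that $T_k|_{X'}$ is binary, and that $a$, $b$ may themselves be suppressed in the restriction, so the refinement really happens at their surviving descendants. Your cluster formulation absorbs both points. The price is the converse half of the characterisation, which is a standard fact but more work than the lemma itself.

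\textbf{A step to fill in.} In your greedy step, the claim that $a$ and $b$ are siblings needs its one-line justification. The parent of $a$ in the current tree has a cluster that is also a cluster of $T'$ and strictly contains $L[a]$. So it contains $D$, the smallest cluster of $T'$ strictly containing $L[a]$. It contains $D$ strictly, because $D$ is not yet a cluster. The same holds for $b$. Hence both parents are the vertex whose cluster is the smallest cluster of the current tree containing $D$, and that vertex has at least three children.

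\textbf{A shared convention.} Both arguments tacitly assume that restriction also removes a root of outdegree one. The paper's literal definition of $T|_{X'}$ does not do this, and your claim that a tree is determined by its clusters relies on it.
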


\begin{proof}
    Since $T'$ is a binary refinement of $T$, there exists a finite sequence of trees $T = T_0, T_1, \dots, T_k = T'$, where each $T_i$ is obtained by applying a refinement operation to $T_{i-1}$. 
    Then, it is enough to prove that either $T_i |_{X'} \cong T_{i-1}|_{X'}$ holds or $T_i |_{X'}$ is obtained by applying a refinement operation to $ T_{i-1}|_{X'}$, for any $i\in [k]$.

    Suppose that $T_i$ is obtained by the refinement at $\{a, b\}$ on $T_{i-1}$. 
    Let~$r'$ be the parent of~$a,b$ in~$T_i$ and let~$r$ be the parent of~$a,b$ (and potentially other vertices) in~$T_{i-1}$.
    If $L[a]\cap X' = \emptyset$, then $b$ is the only child of $r'$ in $T_i|_{X'}$. Then $r'$ is suppressed by definition and $T_i|_{X'} \cong T_{i-1}|_{X'}$ holds. The analogous argument holds when $L[b]\cap X' = \emptyset$. 

    Then we consider $L[a]\cap X' \neq\emptyset$ and $L[b]\cap X' \neq \emptyset$. If there exists at least one child $c$ of $r$ other than $a$ or $b$ which satisfies $L[c]\cap X' \neq \emptyset$, then $r$ has out-degree at least three in $T_{i-1}|_{X'}$. 
    In $T_{i}|_{X'}$, $r$ has as children $r'$ and $c$ (and potentially more), and $r'$ has as children $a$ and $b$. Since the connectivity of the other vertices remains unchanged between $T_{i-1}|_{X'}$ and $T_{i}|_{X'}$, $T_{i}|_{X'}$ is obtained by applying the refinement at $\{a, b\}$ on $T_{i -1}|_{X'}$. If there does not exist such a child $c$ of $r$, then the children of $r$ except $r'$ is removed when restricting the leaf-set from $X$ to $X'$, and hence $r$ is suppressed. 
    Therefore, by the facts that the parent of $a$ and $b$ is $r$ on $T_{i-1}|_{X'}$ and $r'$ on $T_{i}|_{X'}$, we obtain $T_i|_{X'} \cong T_{i-1}|_{X'}$. This completes the proof.
\end{proof}

\begin{lemma}[Modified Lemma 3 of \cite{cherry_picking_and_network_containment}]\label{lem:sequence-of-binary-refinement-reduces-original-tree}
    Let $T$ be a non-binary tree on $X$. Let $T'$ be a binary refinement of $T$. If a sequence $S$ reduces $T'$, it also reduces $T$.
\end{lemma}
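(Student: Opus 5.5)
We prove the statement by induction on the number of refinement operations used to obtain $T'$ from $T$. Choose trees $T = T_0, T_1, \dots, T_k = T'$ in which each $T_i$ arises from $T_{i-1}$ by a single refinement. It then suffices to prove one step: if $T_i$ is obtained from $T_{i-1}$ by the refinement at $\{a,b\}$ (with new vertex $r'$ below $r$), then every sequence reducing $T_i$ also reduces $T_{i-1}$. Chaining this step from $T_k$ down to $T_0$ gives the claim.

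For the single step, let $S = (x_1,y_1)\cdots(x_m,y_m)$ reduce $T_i$. The invariant to maintain is the following. After applying each prefix of $S$ to both trees, the current tree obtained from $T_i$ is either isomorphic to the current tree obtained from $T_{i-1}$, or is a refinement of it (possibly with several new internal vertices, since some of the refinement may have been undone by suppressions). More simply, the first tree equals the second with some vertex's children grouped under an extra vertex. I would prove by induction on the prefix length that the current $T_i$-tree is a binary-or-not refinement of the current $T_{i-1}$-tree; here "refinement" means the general notion of contracting some internal edges of the first tree yields the second.

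Two cases arise when a pair $(x,y)$ is a cherry of the refined tree $R$ but we need to compare with the coarser tree $C$. First, if $x$ and $y$ share a parent $p$ in $R$, then in $C$ the parent of $p$ either stays $p$ or is contracted into an ancestor. Since $x,y$ are leaves, their parent in $C$ is the image of $p$, and they are still siblings, so $(x,y)$ is a cherry of $C$. Second, deleting $x$ and suppressing degree-2 vertices commutes with edge contraction, because contracting an edge into a vertex that later becomes degree $2$ yields the same suppressed tree. If $(x,y)$ is not a cherry of $R$, it does not affect $R$, but it might be a cherry of $C$. In that case we need that applying it to $C$ preserves the refinement relation, or else we must argue it cannot happen harmfully. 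Deleting the leaf $x$ from both trees keeps $R|_{\cdot}$ a refinement of $C|_{\cdot}$, which is analogous to \Cref{lemma:subset-of-binary-refinement-is-also-a-binary-refinement}. However, $R$ did not delete $x$, so the leaf sets diverge.

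This last case is the main obstacle. My plan is to avoid it by using \Cref{lem:Subsequence}-style reasoning in place of the stepwise invariant. Since $S$ reduces $T_i$ on the full leaf set $X$, the relevant deletions happen in the same order. Alternatively, I would show the invariant "$R$ refines $C$ with the same leaf set": a pair affecting $C$ but not $R$ forces $x,y$ to be siblings in $C$ but not in $R$. Because the leaf sets agree and $S$ eventually reduces $R$ to a single leaf, $x$ must later appear as a first coordinate. That contradicts the sequence convention that each leaf appears as a first coordinate at most once, given that $S$ has length $|X|-1$ and reduces $R$, so every first coordinate actually deletes a leaf of $R$. With this contradiction, each pair acts identically on leaf sets of $R$ and $C$, the invariant persists, and at the end $C$ has one leaf, so $S$ reduces $T_{i-1}$.
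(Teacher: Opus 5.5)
The paper gives no proof of this lemma (it only points to Lemma~3 of \cite{cherry_picking_and_network_containment}), so I am judging your argument on its own. Your invariant is the right one: the tree obtained from $T'$ refines the tree obtained from $T$, on the same leaf set. Your handling of a pair that is a cherry of the refined tree is also correct; the commutation step is cleanest via clusters, since deleting $x$ maps each cluster $A$ to $A\setminus\{x\}$ and preserves cluster containment.

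The gap is in the remaining case, which you dismiss by assuming $|S|=|X|-1$. That is not a hypothesis of the statement. In this paper ``sequence'' means an arbitrary cherry-picking sequence, and for those the statement is false. Let $T$ be the star on $\{1,2,3\}$, $T'$ its refinement at $\{1,2\}$, and $S=(2,3)(1,2)(2,3)$, which is a valid cherry-picking sequence.
\begin{itemize}
\item On $T'$ the first pair does nothing, and the other two delete $1$ and then $2$, so $S$ reduces $T'$.
\item On $T$ the first pair deletes $2$ and the other two do nothing, leaving the cherry on $\{1,3\}$.
\end{itemize}
So your ``contradiction'' comes entirely from the imported length assumption, and that assumption must be stated as a hypothesis. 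Under it, the bad case is simply vacuous: $|X|-1$ pairs must delete $|X|-1$ leaves of the refined tree, so every pair is a cherry of it when applied. Your intermediate claim that $x$ must later appear as a first coordinate is not needed, and it fails if $x$ is the surviving leaf.

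The length-$(|X|-1)$ version is also too weak for how the paper uses the lemma. In \Cref{lem:cover-non-binary-with-binary-refinement} it is applied to $T'|_{X'}$ and $T|_{X'}$ with $S$ a sequence of the whole tree $T'$. There $S$ has $|X|-1$ pairs, possibly many more than $|X'|-1$, and many of them legitimately do nothing to $T'|_{X'}$, so your counting no longer excludes the bad case.

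The hypothesis that works is one every sequence of a tree satisfies: no leaf occurs in $S$ after it has occurred as a first coordinate. Write $L$ for the common leaf set of the lemma's $T$ and $T'$.
\begin{itemize}
\item Exactly as in Fact~1 of the proof of \Cref{lem:Subsequence}, $S$ has at most $|L|-1$ pairs with both coordinates in $L$.
\item If $S$ reduces $T'$, each of these pairs must therefore delete a leaf of the current refined tree.
\item Every other pair involves a leaf outside $L$ and affects neither tree.
\end{itemize}
With that, your invariant goes through unchanged. Alternatively, keep your same-leaf-set version for $T,T'$ on $X$ and transfer to $X'$ with \Cref{lem:Subsequence}. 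Since $S$ reduces $T'$ and $T'|_{X'}$, it has $|X'|-1$ pairs inside $X'$. Since $S$ also reduces $T$, it then reduces $T|_{X'}$.
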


    
    

\begin{lemma}\label{lem:cover-non-binary-with-binary-refinement}
    Let $T$ be a non-binary tree on $X$. Let $T'$ be a binary refinement of $T$. If $\mathcal{S}$ is a survival covering set of $T'$, then it is also a survival covering set of $T$.
\end{lemma}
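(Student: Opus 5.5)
To prove the statement, I fix $\mathcal{S}$, a survival covering set of $T'$, and an arbitrary subtree $T|_{X'}$ of $T$ with $X'\subseteq X$. The goal is a sequence $S\in\mathcal{S}$ that survivably reduces $T|_{X'}$ with respect to $T$.

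First, each $S\in\mathcal{S}$ is a sequence of $T'$, hence it reduces $T'$ and has length $|X|-1$. By \Cref{lem:sequence-of-binary-refinement-reduces-original-tree}, $S$ also reduces $T$, so $S$ is a sequence of $T$. Next, by \Cref{lemma:subset-of-binary-refinement-is-also-a-binary-refinement}, $T'|_{X'}$ is a binary refinement of $T|_{X'}$. Since $\mathcal{S}$ survivably covers $\mathcal{T}_{T'}$, some $S\in\mathcal{S}$ survivably reduces $T'|_{X'}$ with respect to $T'$. In particular $S$ reduces $T'|_{X'}$, so by \Cref{lem:sequence-of-binary-refinement-reduces-original-tree} (applied to the pair $T'|_{X'}$, $T|_{X'}$) it also reduces $T|_{X'}$.

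It remains to match the surviving leaves. The surviving leaf of $S$ with respect to $T'$ is the second coordinate $y$ of the last pair of $S$ that reduces a cherry of $T'$. The same definition for $T$ picks out the last pair of $S$ that reduces a cherry of $T$. I will show that these coincide, namely that $y$ is the leaf remaining after applying $S$ to the whole tree. Since $S$ reduces $T$ with exactly $|X|-1$ pairs and each effective pair deletes one leaf, every pair of $S$ must act on $T$. This is also implicit in the proof of \Cref{lem:Subsequence}. The same holds for $T'$. So in both trees the last effective pair is the final pair $(x_{n-1},y_{n-1})$ of $S$, and the surviving leaf is $y_{n-1}$ for both $T$ and $T'$.

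Finally, the leaf remaining in $T|_{X'}S$ must be identified with the leaf remaining in $T'|_{X'}S$, which is $y_{n-1}$ by assumption. I expect this step to be the main obstacle. My approach is to use \Cref{lem:Subsequence}: $S$ has a subsequence $S'$ of $|X'|-1$ pairs within $X'$, and the leaf remaining is the unique element of $X'$ that never occurs as a first coordinate in $S'$. By the argument of Fact 1 there, this is the unique element of $X'$ that never occurs as a first coordinate of a pair of $S$ whose other entry lies in $X'$. This characterization depends only on $S$ and $X'$, not on the tree. Applied to $T'|_{X'}$ and to $T|_{X'}$, it therefore yields the same leaf $y_{n-1}$.
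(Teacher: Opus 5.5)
Your proposal is correct and follows the same route as the paper: restrict the refinement using \Cref{lemma:subset-of-binary-refinement-is-also-a-binary-refinement}, then transfer reduction using \Cref{lem:sequence-of-binary-refinement-reduces-original-tree}. You also make explicit two points that the paper's one-line proof leaves implicit: that each $S\in\mathcal{S}$ is a sequence of $T$, and that the surviving leaf (the second coordinate of the last pair of $S$) is the leaf left in both $T'|_{X'}S$ and $T|_{X'}S$, which your tree-independent characterization via \Cref{lem:Subsequence} correctly establishes.
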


\begin{proof}
    Let $X' \subseteq X$. There is an $S \in \mathcal{S}$ that survivably reduces $T'|_{X'}$ with respect to $T'$. And since $T'|_{X'}$ is a binary refinement of $T|_{X'}$ by \Cref{lemma:subset-of-binary-refinement-is-also-a-binary-refinement}, and by \Cref{lem:sequence-of-binary-refinement-reduces-original-tree}, $S$ also survivably reduces $T|_{X'}$ with respect to $T$.
\end{proof}

The \emph{height} of a non-binary tree is the length of the longest path from the root to a leaf.

\paragraph{Caterpillarization.} Consider a non-leaf vertex $v$ of a tree $T$. Let $p$ be the parent of $v$ (if it exists) and $a_1, a_2, \ldots, a_k$ the children of $v$ such that $s(a_1) \le s(a_2) \le \ldots\le s(a_k)$. We call the following operation the \emph{caterpillarization} of vertex $v$ with respect to the ordering~$(a_1,a_2,\ldots, a_k)$.
See the left and middle trees in~\Cref{fig:non-binary-and-its-refinements} for an example of caterpillarization. 
\begin{itemize}
    \item Delete $v$ from the tree, and create $k-1$ new vertices $v_1, \ldots, v_{k-1}$.
    \item Add an edge from $p$ to $v_1$.
    \item Add an edge from $v_i$ to $v_{i+1}$ for all $i\in [k-2]$.
    \item Add an edge from $v_i$ to $a_i$ for all $i\in [k-1]$, and add an edge from $v_{k-1}$ to $a_k$.
\end{itemize}
The \emph{caterpillarization} of a tree $T$ is a tree obtained by applying the caterpillarization operation to every non-leaf vertex of $T$.
Furthermore, we call $v_1, \ldots, v_{k-1}$ the \emph{refined path} of $v$, and $v_1$ the \emph{head} of $v$.
We do not apply the caterpillarization processes to leaves, but for consistencies sake, we say that the \emph{head} of a leaf~$a$ is~$a$ itself.

\begin{observation}\label{obs:caterpillarization}
    A caterpillarization $R$ of a tree $T$ is also a binary refinement of $T$.
\end{observation}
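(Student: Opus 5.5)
The statement says that a caterpillarization $R$ of $T$ is a binary refinement of $T$. My plan is to show that every single-vertex caterpillarization step can be written as a finite sequence of refinement operations. Applying this to all non-leaf vertices in turn then gives a finite sequence of refinements that turns $T$ into a binary tree, namely $R$.

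First I fix a non-leaf vertex $v$ with parent $p$ and children $a_1,\ldots,a_k$, ordered so that $s(a_1)\le\cdots\le s(a_k)$. If $k=2$, the caterpillarization only renames $v$ as $v_1$, so the tree is unchanged up to isomorphism. If $k\ge 3$, I apply refinements at $v$ from the bottom up.

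\begin{enumerate}
    \item The first refinement is at $\{a_{k-1},a_k\}$. It creates a new vertex, which plays the role of $v_{k-1}$, with children $a_{k-1}$ and $a_k$.
    \item While $v$ still has outdegree at least three, I refine at $\{a_{i},v_{i+1}\}$, taking $i=k-2,k-3,\ldots,2$. The refinement at $\{a_i,v_{i+1}\}$ creates $v_i$ as the parent of $a_i$ and $v_{i+1}$.
    \item After these $k-2$ refinements, $v$ has exactly the two children $a_1$ and $v_2$. So $v$ itself plays the role of $v_1$, with $p$ as its parent.
\end{enumerate}

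The resulting local structure is exactly the refined path $v_1,\ldots,v_{k-1}$ with the prescribed edges. Since refinement operations only change edges incident to $r$ and the new vertex, each step is legal: $v$ has outdegree at least three before each refinement. The step also leaves the subtrees below the $a_i$ and the rest of the tree untouched.

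Second, I carry out this procedure for every non-leaf vertex. These local modifications commute, because each only changes edges between a vertex and its children. The vertices $a_i$ themselves keep their identities, so caterpillarizing a child afterwards does not conflict. At the end every internal vertex has outdegree exactly two, so the tree is binary and equals $R$. By definition $R$ is then a binary refinement of $T$.

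The only delicate point is bookkeeping. I need the new vertex created by each refinement to be identified with the correct $v_i$, and the original $v$ to be identified with the head $v_1$, so that the result is isomorphic to $R$ as defined. I expect this to be routine, since isomorphism is leaf-label preserving.
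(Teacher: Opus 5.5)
Your proposal is correct and follows the same approach as the paper, whose proof simply notes that caterpillarizing a vertex is a series of refinements turning it into a chain of binary vertices. Your explicit bottom-up order fills in the details the paper leaves implicit: refine first at $\{a_{k-1},a_k\}$, then at $\{a_i,v_{i+1}\}$ for $i=k-2,\ldots,2$, and keep $v$ as the head $v_1$.
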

\begin{proof}
    The caterpillarization of a vertex can be written as a series of taking refinements.
    These refinement steps convert non-binary vertices into a chain of binary vertices.
\end{proof} 

\begin{theorem}\label{thm:nonbin_surviving_covering_number}
    Let $T$ be a non-binary tree on $X$, where $|X|\ge 2$, and let~$\rho$ be the root of~$T$. Let $R$ be a caterpillarization of $T$.
    Then $c'(T)= c'(R) = |\psi_\rho|$.
    Let the set of children of $\rho$ be $A$ \yuki{where~$|A|\ge 2$}. Let $A' = \{v | v \in A, s(v)\ge2\}$, and let $A'' = A \setminus A'$.
    Furthermore, we have
\[ c'(T) = \begin{cases} 
      \prod _{v \in A'}{c'_T(v)} + |A''| & \text{if } A' \neq \emptyset \\
      |A''| & \text{if } A' = \emptyset. \\
   \end{cases}
\] 
\end{theorem}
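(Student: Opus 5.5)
We will prove the theorem by strong induction on the height of $T$ (equivalently on $|X|$), proving simultaneously the three claims $c'(T)=c'(R)$, $c'(T)=|\psi_\rho|$ and the recursive formula. The strategy is to squeeze $c'(T)$ between two matching bounds. The lower bound is $|\psi_\rho|$, via \Cref{lem:pairwise-irreducible-subtree-lower-bound-non-binary}. The upper bound is $c'(R)$, computed through the binary recursion \Cref{cor:surviving_covering_number} along the refined path of $\rho$.

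\textbf{Lower bound.} As in the proof of \Cref{lem:lower_bound}, a survival covering set needs a distinct sequence for every element of a pairwise survivably irreducible set, so $c'(T)\ge|\psi_\rho|$. By \Cref{lem:pairwise-irreducible-subtree-lower-bound-non-binary}, $|\psi_\rho|\ge\prod_{v\in A'}|\psi_v|+|A''|$, with the product term omitted when $A'=\emptyset$. For each $v\in A'$, the pendant subtree $T[v]$ has smaller height and at least two leaves, so the induction hypothesis gives $|\psi_v|=c'_T(v)$. Hence $c'(T)\ge\prod_{v\in A'}c'_T(v)+|A''|$.

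\textbf{Upper bound.} By \Cref{obs:caterpillarization} and \Cref{lem:cover-non-binary-with-binary-refinement}, $c'(T)\le c'(R)$, so it suffices to compute $c'(R)$. First, for each $v\in A'$, the pendant subtree of $R$ at the head of $v$ is a caterpillarization of $T[v]$. By induction, its survival covering number is therefore $c'_T(v)$; for a leaf child the head is the leaf itself.

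Next, list the children of $\rho$ in nondecreasing order of $s$, so the $|A''|$ leaves come first and the elements of $A'$ follow. The refined path $\rho_1,\dots,\rho_{k-1}$ then hangs the leaves off the top, and the non-leaf children sit at the bottom. We apply \Cref{cor:surviving_covering_number} from the bottom vertex $\rho_{k-1}$ upward.
\begin{itemize}
\item At a path vertex both of whose children have at least two leaves, the corollary multiplies the two values. Inductively along the path, the value at $\rho_{|A''|+1}$ is $\prod_{v\in A'}c'_T(v)$.
\item Each subsequent step upward attaches a single leaf, and the corollary adds $1$. This yields $\prod_{v\in A'}c'_T(v)+|A''|$ at $\rho_1$.
\end{itemize}

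The bookkeeping needs care at the ends of the path. If $|A'|=1$, there are no multiplication steps and we start directly from the single head. If $A'=\emptyset$, the bottom vertex is a cherry with value $2$, and each further leaf adds $1$, giving $|A''|$. The condition $s(a)\ge s(b)$ in the corollary holds at each path vertex, because the subtree below the path contains everything ordered later. It is also harmless that a path vertex with two non-leaf children may have them in either order, since multiplication is symmetric.

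\textbf{Conclusion.} Combining the two bounds gives $c'(R)\ge c'(T)\ge|\psi_\rho|\ge\prod_{v\in A'}c'_T(v)+|A''|=c'(R)$, so all quantities are equal. The base case is a star, i.e.\ $A'=\emptyset$, where the same computation gives $|A''|$ on both sides.

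The main obstacle is the upper-bound step: verifying that the caterpillarization computation exactly matches the lower bound. In particular, with the leaves placed on top, every leaf costs only $+1$ instead of being multiplied in. This is why the ordering by $s$ matters, and the edge cases $|A'|\le1$ must be handled without the multiplication rule. A secondary subtlety is making sure that the induction hypothesis about $c'$ of a caterpillarization applies to the pendant subtrees of $R$ at the heads.
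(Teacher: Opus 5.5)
Your proposal is correct and takes essentially the same route as the paper. Both argue by induction on height and squeeze $c'(T)$ between $|\psi_\rho|\ge\prod_{v\in A'}|\psi_v|+|A''|$ (from \Cref{lem:pairwise-irreducible-subtree-lower-bound-non-binary}) and $c'(R)$ (via \Cref{lem:cover-non-binary-with-binary-refinement}), evaluating $c'(R)$ by applying \Cref{cor:surviving_covering_number} up the refined path of $\rho$ with the leaf children on top. The remaining differences are cosmetic: you apply the induction hypothesis as $|\psi_v|=c'_T(v)$ on the lower-bound side where the paper uses $c'_R(b)=c'_T(a)=|\psi_a|$ on the upper-bound side, and you spell out the end-of-path bookkeeping (the cases $|A'|\le 1$) that the paper leaves implicit.
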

\begin{proof}
    We use induction on the height~$h(T)$ of~$T$.
    For the base case, if~$h(T)=1$, then all leaves are attached to the root, and~$c'(T) \ge |X|$ since exactly one leaf can be the surviving leaf of a sequence.
    On the other hand, consider the caterpillarization~$R$ of~$T$.
    By \Cref{cor:surviving_covering_number},~$c'(R) = |X|$; by \Cref{lem:cover-non-binary-with-binary-refinement}, we have~$c'(T)\le c'(R) = |X|$.
    It follows that~$c'(T) = |X|$.
    To see that~$|\psi_\rho| = |X|$, note that one can take the set of singletons in~$X$, which is pairwise survivably irreducible.
    Thus,~$c'(T) = |\psi_\rho|$.

    Now suppose we have proved the claim for trees with height at most~$h$ where~$h\ge 1$. 
    We shall show that the claim holds for trees with height~$h+1$.
    Let $T$ be a tree of height $h+1$ with root $\rho$ and let $R$ be an arbitrary caterpillarization of $T$.
    Let $A = \{a_1, \ldots, a_k\}$ be the set of children of $\rho$.
    Let $B = \{b_1, \ldots, b_k\}$ be the set of vertices where $b_i$ is the head of $a_i$ in $R$. 
    The elements of $A$ are vertices of $T$. The elements of $B$ and the vertices $\rho_1, \ldots, \rho_{k-1}$ are vertices of $R$ and may not necessarily be vertices of~$T$.
    As defined in the claim, let $A'$ and $A''$ be the set of non-leaf and leaf elements of $A$, respectively.
    Define $B'$ and $B''$ for $B$ analogously.
    
    The tree $R$ is a binary refinement of $T$ by \Cref{obs:caterpillarization}.
    By applying \Cref{cor:surviving_covering_number} to $R$, in particular by applying it to $\rho_{k-1}, \ldots, \rho_1$ repeatedly, we get 
     \[ c'(R) = \begin{cases} 
          \prod _{v \in B'}{c_R'(v)} + |B''| & \text{if } B' \neq \emptyset \\
          |A''| & \text{if } B' = \emptyset. \\
       \end{cases}
    \]

    Since caterpillarization is only applied to non-leaf vertices, we have~$|A''| = |B''|$.
    Note that every element $b$ of $B'$ is the head of exactly one element $a$ of $A'$. 
    Furthermore, for any~$a\in A'$, we have $h(T[a]) < h(T)$ because $a$ is a child of the root of $T$. 
    Since $b$ is the head of $a$, and $R$ is a caterpillarization of $T$, the subtree $R[b]$ is a caterpillarization of $T[a]$. By the induction hypothesis, $c'_R(b) = c'_T(a) = |\psi_a|$. It follows that
        
    \[ c'(R) = \begin{cases} 
          \prod _{v \in A'}{|\psi_v|} + |A''| & \text{if } A' \neq \emptyset \\
          |A''| & \text{if } A' = \emptyset. \\
       \end{cases}
    \]

    By \Cref{lem:pairwise-irreducible-subtree-lower-bound-non-binary}, we have 
    \[ |\psi_\rho| \ge \begin{cases} 
          \prod _{v \in A'}{|\psi_v|} + |A''| & \text{if } A' \neq \emptyset \\
          |A''| & \text{if } A' = \emptyset. \\
       \end{cases}
    \]

    Therefore, $c'(R) \le |\psi_\rho|$.
    By \Cref{lem:cover-non-binary-with-binary-refinement}, the covering set of a binary refinement of a tree is also a covering set of the original tree.
    By \Cref{obs:caterpillarization}, $R$ is a binary refinement of $T$, and therefore $c'(T) \le c'(R)$.
    
    It follows that $c'(T) \le |\psi_\rho|$.
    But we also have $c'(T) \ge |\psi_\rho|$, because the elements of $\psi_\rho$ are pairwise survivably irreducible.
    Combining the above, we have  $|\psi_\rho| \le c'(T) \le c'(R) \le |\psi_\rho|$. Therefore, $c'(T) = c'(R) = |\psi_\rho|$. Additionally, $R$ is a caterpillarization of $T$, so the claim follows.
\end{proof}

\section{Covering Number for non-binary trees}\label{sec:NonBinCover}

\yuki{In this section, we shall use both phylogenetic trees and unrooted trees, and use their full names to distinguish between the two objects.}
In this section, we \yuki{consider the covering number of non-binary phylogenetic trees.}
\medskip
\begin{center}
\noindent\fbox{\parbox{0.95\linewidth}{
{\sc CoveringNumber$(T,k)$}\\
{\bf Input:} Non-binary phylogenetic tree $T$ on $n$ leaves and an integer $k$.\\
{\bf Decide:} Does there exist a set of cherry-picking sequences of size at most $k$ that reduces all subtrees of $T$?}}
\end{center}
\medskip

\yuki{We write~$c(T)$ to denote the smallest~$k$ for which {\sc CoveringNumber$(T,k)$} is a YES-instance.}
We focus on \emph{star phylogenetic trees}, where every leaf is a child of the root.
Let us denote star phylogenetic trees on~$n$ leaves with~$T^*(n)$.
We show that finding $c(T^*(n))$ is equivalent to solving the following problem.

Let~$n,k\in \mathbb{N}$.
\yuki{By an \emph{unrooted tree}, we mean an undirected connected acyclic graph.}
Suppose we have a set $S=\{T_1,\ldots, T_k\}$ of unrooted trees, 
where each unrooted tree~$T_i$ has~$n$ vertices, which are labelled from $1$ to $n$.
Let $T = (V,E)$ be an unrooted tree. We write~$|T| = |V|$.
Let~$A \subseteq V$. 
We write $T[A]$ to denote the \yuki{\emph{induced subgraph} of~$T$, which is the graph whose vertex set is~$A$ and whose edge set consists of all elements in~$E$ with both endpoints in~$A$.} 
We say that $S$ is \emph{complete for~$n$} if for any subset~$A$ of~$[n]$, there exists an unrooted tree~$T_i\in S$ such that~$T_i[A]$ is connected.
\medskip
\begin{center}
\noindent\fbox{\parbox{0.95\linewidth}{
{\sc SubsetConnectivity}\\
{\bf Input:} Natural numbers $n$ and $k$.\\
{\bf Decide:} Does there exist a set of unrooted trees $\{T_1,\ldots, T_k\}$ that is complete for $n$?}}
\end{center}
\medskip

\yuki{We show that \textsc{CoveringNumber} for star phylogenetic trees and \textsc{SubsetConnectivity} are equivalent.}

\begin{theorem}\label{thm:Equiv}
Let~$n,k\in\mathbb{N}$. Then~$(T^*(n),k)$ is a YES-instance of~\textsc{CoveringNumber} if and only if~$(n,k)$ is a YES-instance of~\textsc{SubsetConnectivity}.
\end{theorem}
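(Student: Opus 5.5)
The plan is to set up an explicit correspondence between sequences of the star $T^*(n)$ and labelled unrooted trees on vertex set $[n]$. Every sequence $S=(x_1,y_1)\cdots(x_{n-1},y_{n-1})$ of $T^*(n)$ consists of $n-1$ pairs with distinct first coordinates. Let $G(S)$ be the graph on $[n]$ with edge set $\{x_iy_i : i\in[n-1]\}$. I will show that $G(S)$ is a tree. Removing $x_i$ after step $i$ makes each $x_i$ a leaf of the subgraph spanned by the edges $x_jy_j$ with $j\ge i$, since $x_i$ never appears again. So adding the edges in reverse order, each new edge attaches the new vertex $x_i$ to the current component containing $y_i$. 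Hence $G(S)$ is connected and acyclic with $n-1$ edges.

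Conversely, every labelled tree $U$ on $[n]$ arises this way. Repeatedly delete a leaf $x$ of $U$ together with its neighbour $y$ and record the pair $(x,y)$. In the star every pair of remaining leaves is a cherry, so this always yields a valid sequence of $T^*(n)$ with $G(S)=U$. The only care needed is to check the cherry-picking condition on second coordinates. Each $y$ is either deleted later or is the last remaining vertex, which is exactly the second coordinate of the final pair.

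The key step is to characterise when $S$ reduces a subtree $T^*(n)|_A$. Since every subtree of a star is a star, every pair of its leaves is a cherry. By \Cref{lem:Subsequence}, $S$ reduces $T^*(n)|_A$ (for $|A|\ge 2$) if and only if $S$ has a subsequence of $|A|-1$ pairs with both entries in $A$. This holds exactly when $G(S)[A]$ contains $|A|-1$ edges. In a forest, that is equivalent to $G(S)[A]$ being connected. Singleton and empty $A$ are trivially fine on both sides. This step carries the real content, and I expect it to be the main obstacle. One must make sure \Cref{lem:Subsequence} applies verbatim to non-binary stars, which it is stated for, and that induced-subgraph connectivity matches the edge count. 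The latter holds because an induced subforest on $|A|$ vertices is connected iff it has $|A|-1$ edges.

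With these facts, both directions of \Cref{thm:Equiv} follow. A covering set $\{S_1,\dots,S_k\}$ of $T^*(n)$ gives the trees $G(S_1),\dots,G(S_k)$, which are complete for $n$. Conversely, a complete set $\{U_1,\dots,U_k\}$ gives sequences $S_i$ with $G(S_i)=U_i$, and these form a covering set of size at most $k$.
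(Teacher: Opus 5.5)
Your proposal is correct and follows essentially the same route as the paper: the same edge-set correspondence between sequences of $T^*(n)$ and labelled trees on $[n]$, leaf-pruning for the converse, and \Cref{lem:Subsequence} combined with the fact that an induced subforest on $|A|$ vertices is connected if and only if it has $|A|-1$ edges. The only difference is cosmetic. You prove that $G(S)$ is acyclic by adding edges in reverse order, each one introducing a fresh vertex $x_i$, whereas the paper argues by contradiction from a hypothetical cycle; your version is, if anything, the cleaner of the two.
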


\begin{proof}
    Suppose first that $(T^*(n),k)$ is a YES-instance of \textsc{CoveringNumber}. 
    Then we have at most~$k$ cherry-picking sequences~$S_1,\ldots, S_k$ which cover~$T^*(n)$.
    We construct $k$ unrooted trees on~$n$ vertices. 
    For each $S_i$, take the following steps.
    \begin{itemize}
        \item Initialize an empty graph $T_i$ on $n$ vertices. 
        \item For every element $(x_j,y_j)$ in $S_i$, add an edge between $x_j$ and $y_j$ in $T_i$. 
    \end{itemize}
    
    By definition of cherry picking sequences, $S_i$ has $n-1$ distinct elements, and each element adds an edge in $T_i$. 
    We prove that~$T_i$ is a tree by showing that it is acyclic.
    
    Suppose for a contradiction that~$T_i$ has a cycle $a_1, a_2, \ldots, a_m$.
    Because of the construction of $T_i$, the elements $(a_1,a_2), \ldots, (a_{m-1}, a_m), (a_m, a_1)$ must appear in $S_i$ in some order (with possibly swapping the coordinates in some pairs).
    Moreover, by definition, an element which appears as a first coordinate in $S_i$ cannot appear later in $S_i$, neither as a first coordinate, nor as a second coordinate.
    So the sequence $S_i$ must contain the pairs $(a_1, a_2), \ldots, (a_{m-1}, a_m), (a_m, a_1)$ or the pairs $(a_2, a_1), (a_3, a_2), \ldots, (a_m, a_{m-1}), (a_1, a_m)$. In either cases, consider the pair among these pairs that appears the latest in $S_i$. Its second element must have appeared as a first coordinate in $S_i$ in an earlier pair due to the previous observation, which contradicts the definition of $S_i$. So $T_i$ cannot contain a cycle.

    It follows that $T_i$ is an unrooted tree, since it has $n$ vertices, $n-1$ edges, and it is acyclic.
    \medskip
    
    It remains to show that $\{T_1, \ldots, T_k\}$ is complete for~$n$.
    Consider any subset~$A\subseteq [n]$, where~$|A|\ge 2$.
    Since $\{S_1,\ldots, S_k\}$ covers the set of all phylogenetic subtrees of $T^*(n)$, there must be a sequence $S_i$ which reduces $T^*(n)|_A$. 
    By \Cref{lem:Subsequence}, $S_i$ has a subsequence $S'_i$ of size $|A|-1$, which contains only the elements of $A$.
    The unrooted tree $T_i[A]$ is acyclic, since $T_i$ is a tree.
    Furthermore, $T_i[A]$ contains~$|A|$ vertices, and has~$|A|-1$ edges.
    Then it is an unrooted tree and it must be connected.
    So the set of unrooted trees~$\{T_1,\ldots,T_k\}$ is complete for~$n$, and thus $(n,k)$ is a YES-instance of \textsc{SubsetConnectivity}.
    \medskip

    Now suppose that $(n,k)$ is a YES-instance of \textsc{SubsetConnectivity}. 
    In particular, let $\{T_1, \ldots, T_k\}$ be a set of unrooted trees that is complete for $n$.
    We show that $(T^*(n), k)$ is a YES-instance of \textsc{CoveringNumber}.

    We construct a cherry-picking sequence $S_i$ from $T_i$ for all $i \in [k]$ as follows.
    \begin{enumerate}
        \item Initialize $S_i$ to be an empty sequence.
        \item While $T_i$ contains at least $2$ vertices:
        \begin{enumerate}
            \item Find two vertices $x$ and $y$, where $x$ is a degree-1 vertex and $y$ its neighbour.
            \item Append $(x,y)$ to $S_i$.
            \item Remove $x$ from $T_i$.
        \end{enumerate}
    \end{enumerate}


    Now we show that $S_i$ is a cherry-picking sequence for $T^*(n)$.
    Note that $|S_i| = n - 1$, because $|T_i| = n$ and we add an element to $S_i$ for each vertex of $T_i$ except one. 
    It follows from the process, that $S_i$ has all elements of $[n]$ as a first coordinate except one, and if an $x \in [n]$ appears as a first coordinate, it will not appear later in $S_i$.
    And since every leaf of $T^*(n)$ is connected to the root, every pair $(x,y)$, where $x \neq y$, is a cherry as long as both $x$ and $y$ are not removed from $T^*(n)$.
    Therefore, every element of $S_i$ removes a leaf from $T^*(n)$, and since $|S_i| = n-1$,  $S_i$ reduces $T^*(n)$.

    Let $A \subseteq [n]$, where $|A|\ge2$. Since the set $\{T_1, \ldots, T_k\}$ is complete for $n$, there is an $i \in [k]$ such that $T_i[A]$ is connected. We claim that there is a subsequence $S_i'$ in $S_i$, where $|S_i'| = |A|-1$, and $S_i'$ contains only the elements of $A$ as first and second coordinates.

    To see this, first observe that there is a one-to-one correspondence between the edges of $T_i$ and the elements of $S_i$. In particular, for any $x,y\in [n]$, we have that $xy$ is an edge in $T_i$ if and only if $(x,y)$ or $(y,x)$ is an element of $S_i$.
    The edges in~$T_i[A]$ are also edges in~$T_i$. 
    All other edges in~$T_i$ have at least one endpoint from~$[n]\setminus A$.
    Since~$T_i[A]$ is an unrooted tree,~$T_i$ contains exactly~$|A|-1$ edges where both endpoints are in~$A$.
    It follows that~$S_i$ has a subsequence~$S'_i$ where~$S'_i = |A|-1$, and $S_i'$ contains only the elements of $A$ as first and second coordinates.
    

    By the above claim and \Cref{lem:Subsequence}, if $T_i[A]$ is a connected component in $T_i$, then $S_i$ reduces $T^*(n)|_A$. This means that since $\{T_1, \ldots, T_k\}$ is complete for $n$, for every phylogenetic subtree $T'$ of $T^*(n)$, there is a sequence $S\in\{S_1, \ldots, S_k\}$ that reduces $T'$.
    Therefore, if $(n,k)$ is a YES-instance of \textsc{SubsetConnectivity}, then $(T^*(n), k)$ is a YES-instance of \textsc{CoveringNumber}.

    And since we showed both directions of the claim, $(T^*(n),k)$ is a YES-instance of \textsc{CoveringNumber}, if and only if $(n,k)$ is a YES-instance of \textsc{SubsetConnectivity}.
\end{proof}

In the following theorems we prove a lower and an upper bound for the \textsc{SubsetConnectivity} problem.

Let $star_{n,k}$ denote the unrooted tree on $n$ vertices, where the vertices are labeled from $1$ to $n$, and the vertex labeled $k$ is adjacent to every other vertex.
In the rest of the section, we write~$c(n)$ to be the smallest integer where~$\{T_1,\ldots, T_{c(n)}\}$ is complete for~$n$.
See \Cref{fig:optimalDoubleStar} for an illustration of three cases~$c(4) = 2, c(5) = 3,$ and~$c(6) = 4$.

\begin{figure}[h]
    \centering
    \begin{minipage}{0.2\textwidth}
        \centering
        \pathFour{1}{2}{3}{4}{0.5}
    \end{minipage}
    \begin{minipage}{0.2\textwidth}
        \centering
        \pathFour{3}{1}{4}{2}{0.5}
    \end{minipage} \\
    \vspace{0.3cm}
    \begin{minipage}{0.2\textwidth}
        \centering
        \subconFive{1}{2}{3}{4}{5}{0.5}
    \end{minipage}
    \begin{minipage}{0.2\textwidth}
        \centering
        \subconFive{3}{1}{4}{2}{5}{0.5}
    \end{minipage}
    \begin{minipage}{0.2\textwidth}
        \centering
        \starTreeFive{0.5}
    \end{minipage} \\
    \vspace{0.3cm}
    \begin{minipage}{0.2\textwidth}
        \centering
        \doubleStarSix{1}{3}{5}{2}{4}{6}{0.5}
    \end{minipage}
    \begin{minipage}{0.2\textwidth}
        \centering
        \doubleStarSix{3}{2}{5}{4}{1}{6}{0.5}
    \end{minipage}
    \begin{minipage}{0.2\textwidth}
        \centering
        \doubleStarSix{5}{2}{4}{6}{1}{3}{0.5}
    \end{minipage}
    \begin{minipage}{0.2\textwidth}
        \centering
        \doubleStarSix{1}{4}{5}{2}{3}{6}{0.5}
    \end{minipage}
    \caption{(Top figures) A set of unrooted trees that gives the optimal answer $c(4) = 2$ for the \textsc{SubsetConnectivity} problem. 
    (Middle figures) A set of unrooted trees that gives the optiomal answer $c(5) = 3$.
    (Bottom figures) A set of unrooted trees that gives the optimal answer $c(6) = 4$.}
    \label{fig:optimalDoubleStar}
\end{figure}

\begin{theorem}\label{thm:nostar}
    Let~$n\ge 2$. 
    If~$c(n) = c(n+1)$, then every minimum complete set of trees for~$n+1$ does not contain $star(n+1,m)$ for all $m \in [n+1]$.
\end{theorem}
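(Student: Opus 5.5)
We argue by contraposition: we assume some minimum complete set $\mathcal{C}=\{T_1,\ldots,T_{c(n+1)}\}$ for $n+1$ contains a star $star_{n+1,m}$, and we show that $c(n+1)>c(n)$. Relabel so that $m=n+1$ and $T_1=star_{n+1,n+1}$. The key observation is that a star centred at $n+1$ is nearly useless for subsets avoiding the centre: for $A\subseteq[n]$ with $|A|\ge 2$, the induced subgraph $T_1[A]$ has no edges (every edge of $T_1$ uses $n+1$), so it is disconnected. Hence every subset $A\subseteq[n]$ with $|A|\ge2$ must be made connected by one of $T_2,\ldots,T_{c(n+1)}$. (Subsets of size at most one are trivially connected and can be ignored.)

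Next we restrict the remaining trees to $[n]$ so as to obtain a complete set for $n$ with only $c(n+1)-1$ trees. The restriction $T_i[[n]]$ need not be a tree, since deleting vertex $n+1$ may disconnect $T_i$. Instead we contract: for each $i\ge2$ we define $T_i'$ on $[n]$ by deleting $n+1$ and reconnecting its neighbours $u_1,\ldots,u_d$ in $T_i$ by a path (or star) among them. The result is a tree on $[n]$, and we claim it only adds connectivity. Fix $A\subseteq[n]$. If $T_i[A]$ is connected, then it uses no edge at $n+1$ (since $n+1\notin A$), so its edges survive in $T_i'$ and $T_i'[A]$ is connected. Thus $\{T_2',\ldots,T_{c(n+1)}'\}$ is complete for $n$, which gives $c(n)\le c(n+1)-1<c(n+1)$. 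This contradicts $c(n)=c(n+1)$.

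There is one subtlety in the completeness step: a set $A\subseteq[n]$ might be covered by $T_1$ only as part of a larger set. This does not arise, because completeness demands that $A$ itself be connected in some tree, and we showed $T_1$ fails on every $A\subseteq[n]$ with $|A|\ge2$. We also need the contraction to preserve induced connectivity for every $A$, which holds since edges among $A$ are retained. The main point to check carefully is therefore the choice of reconnecting $n+1$'s neighbours so that $T_i'$ remains a tree on exactly $n$ vertices. Joining the $d$ components of $T_i-(n+1)$ with $d-1$ new edges along a path through $u_1,\ldots,u_d$ gives $n-1$ edges and connectivity, hence a tree. In fact, any valid tree structure would suffice, since completeness only needs $T_i'[A]\supseteq T_i[A]$. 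The case $n=2$ is handled by the same argument, with trivial bookkeeping.
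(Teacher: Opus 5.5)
Your proof is correct and follows essentially the same route as the paper. The star centred at $n+1$ induces an edgeless graph on every $A\subseteq[n]$ with $|A|\ge 2$, so deleting vertex $n+1$ from the remaining trees and reconnecting the resulting components yields a complete set for $n$ of size $c(n+1)-1$, contradicting $c(n)=c(n+1)$; your explicit path reconnection and the observation $T_i'[A]\supseteq T_i[A]$ simply make precise the step the paper leaves as ``connected arbitrarily''.
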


\begin{proof}

    Suppose for a contradiction that there is a complete set $S$ for $n+1$ that contains a $star(n+1,m)$. Without loss of generality, let $m = n+1$.
    Note that 
    $star(n+1,n+1)$ with the vertex~$n+1$ removed (and the incident edges removed)
    consists of only isolated vertices.
    Therefore, for all sets $A$ such that $n+1 \notin A$ and $|A| \ge 2$, we have that $star(n+1,n+1)[A]$ is disconnected. So, every such set must be connected in some $T \in S \setminus \{star(n+1,n+1)\}$. And since the empty set and the single element sets are connected in every tree, $S \setminus \{star(n+1,n+1)\}$ is complete for $n$.
    Let $S'$ contain all elements of $S \setminus \{star(n+1,n+1)\}$, 
    where in each tree, the vertex labeled $n+1$ is removed from the tree and the resulting components are connected arbitrarily to form a new tree.
    Observe that since $S \setminus \{star(n+1,n+1)\}$ is complete for $[n]$, $S'$ is also complete for $[n]$.
    Since $|S'| = |S \setminus \{star(n+1,n+1)\}| = |S| - 1$, we have $c(n) \le c(n+1) - 1$, which contradicts the assumption that $c(n) = c(n+1)$.
    The claim holds.
\end{proof}

\begin{theorem}
\label{thm:cn-plus-one-or-cn}
    $c(n+1) = c(n)+1$ or $c(n)$.
\end{theorem}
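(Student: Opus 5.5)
The claim amounts to two inequalities, $c(n) \le c(n+1)$ and $c(n+1) \le c(n)+1$. I will prove each by converting a complete set for one size into a complete set for the other.

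\textbf{Monotonicity, $c(n)\le c(n+1)$.} Let $\{T_1,\ldots,T_k\}$ be a minimum complete set for $n+1$, so $k=c(n+1)$. For each $T_i$, delete the vertex $n+1$ and its incident edges. This leaves a forest on $[n]$. Reconnect its components arbitrarily by adding edges between them to obtain a tree $T_i'$ on $[n]$, exactly as in the proof of \Cref{thm:nostar}.

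Now take any $A\subseteq[n]$. Since $A\subseteq[n+1]$, completeness gives some $T_i$ with $T_i[A]$ connected. Because $n+1\notin A$, the induced subgraph $T_i[A]$ uses no edge at $n+1$. Hence every edge of $T_i[A]$ survives the deletion, so $T_i'[A]\supseteq T_i[A]$ is connected. Therefore $\{T_1',\ldots,T_k'\}$ is complete for $n$, and $c(n)\le c(n+1)$.

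\textbf{Upper bound, $c(n+1)\le c(n)+1$.} Let $\{T_1,\ldots,T_k\}$ be complete for $n$ with $k=c(n)$. Form $T_i^+$ by attaching the new vertex $n+1$ as a leaf adjacent to vertex $1$, or to any fixed vertex. Add one further tree, the star $star_{n+1,n+1}$ centred at $n+1$. I claim the resulting $k+1$ trees are complete for $n+1$.

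Take $A\subseteq[n+1]$ and consider two cases.
\begin{itemize}
\item If $n+1\notin A$, pick $T_i$ with $T_i[A]$ connected. Then $T_i^+[A]=T_i[A]$, so $A$ is connected in $T_i^+$.
\item If $n+1\in A$, every other vertex of $A$ is adjacent to $n+1$ in the star, so the star induces a connected subgraph on $A$.
\end{itemize}
The empty set and singletons are connected in every tree, so they need no separate treatment. Thus $c(n+1)\le c(n)+1$.

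Combining the two inequalities gives $c(n)\le c(n+1)\le c(n)+1$, and since these are integers, $c(n+1)\in\{c(n),c(n)+1\}$.

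I expect no real obstacle here. The one point needing care is the monotonicity step: reconnecting components must only add edges and never remove any. That guarantees $T_i'[A]$ contains $T_i[A]$ whenever $A$ avoids the deleted vertex.
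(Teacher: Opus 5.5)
Your proof is correct and follows the paper's argument exactly. For the upper bound you attach $n+1$ as a pendant vertex to each tree of a complete set for $n$ and add $star_{n+1,n+1}$; for $c(n)\le c(n+1)$ you delete $n+1$ and reconnect the remaining components arbitrarily. Your explicit case split on whether $n+1\in A$ is slightly more careful than the paper's text, which claims the star is connected on every subset of $[n]$ when it should say every subset containing $n+1$.
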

\begin{proof}
 
    First, we prove that $c(n+1) \le c(n) + 1$. Let $\{T_1, \ldots, T_{c(n)}\}$ be a set of unrooted trees which is complete for $n$. Let $G = \{T'_1, \ldots, T'_{c(n)}\}$, where $T'_i$ is obtained by adding the edge $(1,n+1)$ to $T_i$. Note that $G$ is complete for $n$. 
    Note that for every subset $S\subseteq [n]$, the unrooted tree $star_{n+1, n+1}[S]$ is connected.
    Therefore, $G \cup \{star_{n+1,n+1}\}$ is complete for $n+1$, and $|G \cup \{star_{n+1,n+1}\}| = c(n) + 1$. So $c(n+1) \le c(n) + 1$.

    Now we prove that $c(n) \le c(n+1)$. Let $\{T_1, \ldots, T_{c(n+1)}\}$ be a set of unrooted trees that is complete for $n+1$. Let $G = \{T'_1, \ldots, T'_{c(n+1)}\}$, where $T'_i$ is obtained by removing the vertex labeled $n+1$ from $T_i$ and arbitrarily connecting the resulting forest to form a new tree. $G$ is complete for $n$. To see this, for every subset $S \subseteq [n]$, there is a $T_i$ such that $T_i[S]$ is connected. One can verify that $T'_i[S]$ still remains connected.
    
    

    So $c(n) \le c(n+1) \le c(n)+1$, which means $c(n+1) = c(n)+1$ or $c(n+1) = c(n)$.
\end{proof}

\section{Reducing all subtrees by a single long sequence}\label{sec:Hybrid>cover}

In previous sections, we have restricted ourselves to cherry-picking sequences on the reference tree, i.e., sequences of length~$|T|-1$.
In general, one can have longer cherry-picking sequences; these no longer correspond to trees but to orchard networks (see \Cref{sec:TCOrch} or \cite{cherry_picking_and_network_containment}).
So we ask the following question: given a set of all subtrees of a tree~$T$, can we reduce them by a single sequence? If so, what is the minimum length of such a sequence?
The goal of this section is to show that there is always such a sequence, and that the minimum length is~$\binom{n}{2}$, where~$n$ is the number of leaves in the reference tree (\Cref{thm:OHSBound}).
In this section, all trees are phylogenetic trees.

\paragraph{Cherry order graph.}
Consider a tree $T$ on~$n$ leaves.
Let~$\cD(T)$ denote the set of all non-trivial subtrees of~$T$.
We call every pair~$\{a,b\}\subseteq L[T]$ a \emph{pseudo-cherry} of~$T$.
A \emph{cherry order graph} of $T$ is denoted by $G_T$ and constructed as follows. 
The vertices of $G_T$ are the pseudo-cherries of $T$. In particular, the vertex corresponding to the pseudo-cherry $\{a,b\}$ is denoted by $G_T\{a,b\}$. Note that $G_T$ has exactly $\binom{n}{2}$ vertices.
There is a directed edge from $G_T\{a,b\}$ to $G_T\{c,d\}$ if and only if $\LCA_T(c,d)$ is strictly above $\LCA_T(a,b)$ in $T$.

\begin{observation}\label{obs:cogAcyclic}
    The cherry order graph $G_T$ of $T$ is acyclic.
    This also means that there is a topological ordering of $G_T$.
\end{observation}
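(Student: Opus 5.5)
The plan is to exhibit a strictly monotone "potential" on the vertices of $G_T$ that increases along every edge; acyclicity then follows immediately. The natural choice is to assign to each pseudo-cherry $\{a,b\}$ the vertex $\LCA_T(a,b)$ of $T$, and to measure its height by the depth $d(\LCA_T(a,b))$. Here $d(v)$ is the length of the unique path from the root of $T$ to $v$, which is well defined because $T$ is a rooted tree.

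First I will check that an edge $G_T\{a,b\}\to G_T\{c,d\}$ forces $d(\LCA_T(c,d)) < d(\LCA_T(a,b))$. By definition of the edge, $\LCA_T(c,d)$ is strictly above $\LCA_T(a,b)$. So there is a non-trivial directed path from $\LCA_T(c,d)$ to $\LCA_T(a,b)$. Concatenating the root path of $\LCA_T(c,d)$ with this path gives the unique root path of $\LCA_T(a,b)$, so the depth strictly increases from $\LCA_T(c,d)$ to $\LCA_T(a,b)$.

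Second, I suppose for contradiction that $G_T$ has a directed cycle $G_T\{a_1,b_1\}\to\cdots\to G_T\{a_m,b_m\}\to G_T\{a_1,b_1\}$. Applying the first step along the cycle gives $d(\LCA_T(a_1,b_1)) > d(\LCA_T(a_2,b_2)) > \cdots > d(\LCA_T(a_1,b_1))$, which is impossible. Hence $G_T$ is acyclic. Equivalently, any two pseudo-cherries whose LCAs coincide, or are incomparable, are non-adjacent, and the "strictly above" relation on vertices of $T$ is a strict partial order.

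Finally, the existence of a topological ordering is the standard fact that every finite directed acyclic graph admits one. Concretely, I would list the pseudo-cherries by non-increasing depth of their LCA, breaking ties arbitrarily. By the first step, every edge then goes from an earlier element to a later one. There is no real obstacle here; the only point needing care is that "strictly above" is irreflexive and transitive in a tree, which the depth argument handles directly.
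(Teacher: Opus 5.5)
Your proof is correct and uses the same idea as the paper's one-line argument: edges of $G_T$ follow the strict ancestor relation between lowest common ancestors in $T$, and that relation cannot be cyclic. Your depth potential handles cycles of every length explicitly, whereas the paper only says that two LCAs cannot each be strictly above the other and tacitly relies on transitivity of ``strictly above''. The depth potential also directly gives you a concrete topological ordering, by non-increasing depth of the LCA.
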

\begin{proof}
    Consider two pseudo-cherries $\{a,b\} \ne \{c,d\}$. $\LCA_T(a,b)$ cannot be above and below $\LCA_T(c,d)$ in $T$ at the same time.
\end{proof}


    Consider a tree $T$ on $n$ leaves and a topological ordering $O$ of $G_T$. $O$ exists by \Cref{obs:cogAcyclic}.
    Let $S_O$ be a sequence that \emph{corresponds to the topological ordering} $O$. 
    In particular, if the topological ordering $O$ of $G_T$ is $(G_T\{a_1,b_1\}, \dots, G_T\{a_{\binom{n}{2}}, b_{\binom{n}{2}}\})$. 
    Then $S_O =  (a_1,b_1)(a_2,b_2)\dots(a_{\binom{n}{2}}, b_{\binom{n}{2}})$.

\begin{lemma}\label{lem:SReducesT}
    Let $T$ be a tree and $O$ a topological ordering of $G_T$. Let $T' \in \cD(T)$.
    Then $S_O$ reduces $T'$.
\end{lemma}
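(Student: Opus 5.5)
I will prove the lemma by induction on the number of leaves of $T'$, keeping track of how $T'$ evolves as $S_O$ is applied. A subtle point is that $S_O$ is only a sequence of ordered pairs that may fail to be a cherry-picking sequence in the strict sense. What matters, though, is the effect of applying its pairs to $T'$: a pair acts only if it is currently a cherry, and otherwise it does not affect the tree. The claim to establish is that after applying $S_O$, the tree $T'$ has a single leaf.

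The key invariant concerns the tree obtained from $T'$ after applying some prefix of $S_O$. This tree is $T'|_{Y}$ for the current leaf set $Y\subseteq L[T']$, since reducing a cherry only deletes a leaf. Write $u=\LCA_T(Y)$ and let $w$ be a lowest vertex of $T$ of the form $\LCA_T(a,b)$ with $a,b\in Y$ distinct. Because $w$ is lowest, all leaves of $Y$ below $w$ are children of a single vertex in $T'|_Y$. Hence any pair $a,b\in Y$ with $\LCA_T(a,b)=w$ forms a cherry of the current tree, regardless of whether other elements of $Y$ lie below $w$.

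The main step is to show that the first pseudo-cherry $\{a,b\}$ in $O$ with $a,b\in Y$ and both leaves still present satisfies $\LCA_T(a,b)$ minimal among the LCAs of pairs in $Y$. This follows from the topological ordering. If some pair $\{c,d\}\subseteq Y$ had $\LCA_T(c,d)$ strictly below $\LCA_T(a,b)$, there would be an edge from $G_T\{c,d\}$ to $G_T\{a,b\}$, so $\{c,d\}$ would appear earlier in $O$. At that earlier moment $c$ and $d$ were both present, since leaves are only ever deleted. I need to argue that this earlier pair would have acted. The cleanest way is to argue directly about the first moment after a given time at which a pair in $Y$ appears, using the same minimality reasoning. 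So I will set up the induction as follows: whenever the current leaf set is $Y$ with $|Y|\ge 2$, the next pair of $S_O$ with both entries in $Y$ has minimal LCA, is therefore a cherry of $T'|_Y$, and removes one leaf. The worry is a pair $\{a,b\}$ that appears after the time point but whose LCA is not minimal, while the pair with minimal LCA $\{c,d\}$ appeared before the time point. I expect this to be the main obstacle.

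To resolve it, I will replace the "next pair" induction by a descending argument on vertices of $T$. Process the vertices $v$ of $T$ in the order in which the pseudo-cherries with $\LCA_T=v$ occur in $O$; note that these need not be contiguous, since incomparable vertices interleave. I will show by induction over positions in $O$ that, once all pseudo-cherries with LCA equal to $v$ have been applied, at most one leaf of $L[v]\cap L[T']$ remains. At the time any pair $\{a,b\}$ with $\LCA_T(a,b)=v$ is processed, every vertex strictly below $v$ has already been completed. Indeed, each such vertex $x$ has all of its pseudo-cherries earlier in $O$, since every pseudo-cherry with LCA equal to $x$ has an edge to $\{a,b\}$. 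By induction, each child subtree of $v$ therefore retains at most one leaf of $T'$. So the surviving leaves under $v$ are siblings in the current tree, and every pair among them is a cherry when processed. Since all pairs with LCA $v$ occur in $O$ and each surviving pair present gets reduced, at most one leaf remains under $v$ when all these pairs are done. Applying this to the root $\rho$ shows that $T'S_O$ has one leaf. The remaining care is to check that reducing a cherry of $T'|_Y$ is consistent with deleting a leaf from the restriction, which is routine.
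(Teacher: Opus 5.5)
Your final argument is correct, but it takes a different route from the paper.

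\textbf{The paper's route.} The paper inducts on the size of $T'$. It lets $(a,b)$ be the first pair of $S_O$ with $a,b\in L[T']$ and shows this pair is a cherry of $T'$: otherwise some $c\in L[T']$ has $\LCA_T(a,c)$ or $\LCA_T(b,c)$ strictly below $\LCA_T(a,b)$, so that pseudo-cherry would precede $\{a,b\}$ in $O$. It then splits $S_O=S''S'$, with $S''$ ending at $(a,b)$, and applies the induction hypothesis to $T'(a,b)$ with the \emph{whole} sequence $S_O$. No pair of $S''$ lies inside $L[T'(a,b)]$, so $S''$ does not affect $T'(a,b)$, and hence $T'S_O=T'(a,b)S'=T'(a,b)S_O$.

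\textbf{Your anticipated obstacle.} This prefix argument is exactly what removes the difficulty you expected in your first plan. At every stage, all pairs inside the current leaf set lie after the current position: before $(a,b)$ no pair lies inside $L[T']$, and $(a,b)$ itself contains the deleted leaf. So the next such pair is the first one in $O$. No pair of the current leaf set has LCA strictly below it, and it is therefore a cherry. Your ``next pair'' induction would in fact have gone through.

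\textbf{Your route.} Your replacement proves, by induction on the height of $v$, that once all pseudo-cherries with LCA $v$ have been applied, at most one leaf of $L[v]\cap L[T']$ survives. The topological order ensures every child of $v$ is completed before any pair with LCA $v$ is applied. Hence the surviving leaves below $v$ are children of $v$ in the current restriction, and every pair among them that is still present acts.

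\textbf{Comparison.} Your route needs slightly more bookkeeping, namely the suppression check that those leaves are siblings. In exchange it gives a clearer structural picture: $S_O$ collapses every clade of $T$ to a single representative, bottom-up and simultaneously for all $T'$. It also avoids the prefix-splitting trick.

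Your side remark that $S_O$ need not be a cherry-picking sequence in the paper's strict sense is also right. For the star on four leaves, a topological order ending with two disjoint pairs cannot be oriented to meet the definition. The lemma only concerns the action of the pairs, which both proofs use correctly.
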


\begin{proof}
    We prove the claim by induction on the number of pseudo-cherries of $T'$.
    For the base case, consider $T'$ to be a single pseudo-cherry $\{a,b\}$. By construction of $S_O$, either $(a,b)$ or $(b,a)$ appears in $S_O$, so $S_O$ reduces $T'$. Therefore the base case holds.
    
    Now suppose that the claim holds for all subtrees on at most $k$ pseudo-cherries. We show that it holds for trees on $k+1$ pseudo-cherries.
    Let~$T'$ be a tree on~$k+1$ pseudo-cherries.
    Let $(a,b)$ be the first element of $S_O$ such that $a,b \in L[T']$. Note that $T'$ is not affected by any earlier element of $S_O$. We claim that $(a,b)$ affects $T'$. Suppose this is not the case. 
    Then there exists $c \in L[T']$ such that $LCA_{T'}(a,c)$ or $LCA_{T'}(b,c)$ is strictly below $LCA_{T'}(a,b)$. 
    Without loss of generality, suppose that $LCA_{T'}(a,c)$ is strictly below $LCA_{T'}(a,b)$. This means that there is an edge $(G_{T'}\{a,c\}, G_{T'}\{a,b\})$ in $G_{T'}$. And since $T'$ is a subtree of $T$, the edge $(G_T\{a,c\}, G_T\{a,b\})$ must also be in $G_T$. But this means that the pair $(a,c)$ or $(c,a)$ appears earlier in $S_O$ than $(a,b)$ or $(b,a)$ which  contradicts our choice of~$(a,b)$. 
    So $T'$ is affected by $(a,b)$, and therefore, $T'(a,b)$ is a tree on at most $k$ pseudo-cherries.
    
    Consider the two sequences $S'$ and $S''$, where $S'$ is obtained from $S_O$ by deleting the first elements of $S_O$ up to and including $(a,b)$ and $S''$ is obtained by deleting the last $|S'|$ elements of $S_O$. Note that $S_O = S''S'$. Note that by the choice of $(a,b)$, $T'S'' = T'(a,b)$. And also note that $S''$ does not affect $T'(a,b)$, 
    by choice of $(a,b)$.
    By the induction hypothesis, $S_O$ reduces $T'(a,b)$. And because $S''$ does not affect $T'(a,b)$, $T'(a,b)$ must be reduced by $S'$. And since $T'S'' = T'(a,b)$ and $S'$ reduces $T'(a,b)$, it follows that $T'$ must be reduced by $S_O = S''S'$.
\end{proof}

\begin{theorem}\label{thm:OHSBound}
    Let~$T$ be a tree on~$n$ leaves.
    Then a cherry-picking sequence that reduces all trees in~$\mathcal{D}(T)$ is of length at least~$\binom{n}{2}$, and this bound is tight.
\end{theorem}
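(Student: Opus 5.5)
The proof splits into a lower bound and a matching construction.

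\textbf{Tightness.} By \Cref{obs:cogAcyclic}, $G_T$ has a topological ordering $O$, and the corresponding sequence $S_O$ has length exactly $\binom{n}{2}$. By \Cref{lem:SReducesT}, $S_O$ reduces every $T'\in\cD(T)$, so the bound is attained. One point needs checking: $S_O$ must be a cherry-picking sequence in the paper's sense, meaning each second coordinate reappears later as a first coordinate or is the second coordinate of the last pair. Since the pair $\{a,b\}$ may be oriented as either $(a,b)$ or $(b,a)$, I would fix the orientations so this holds. If the strict definition is too rigid, I would argue, as \Cref{sec:TCOrch} suggests, that the general notion of sequences used for orchard networks allows it.

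\textbf{Lower bound.} Let $S$ be any sequence that reduces every tree in $\cD(T)$. For each pseudo-cherry $\{a,b\}$, the subtree $T|_{\{a,b\}}$ is a single cherry and lies in $\cD(T)$. Reducing it requires an element of $S$ that is a cherry of $T|_{\{a,b\}}$, and the only such elements are $(a,b)$ and $(b,a)$. Hence for every one of the $\binom{n}{2}$ unordered pairs $\{a,b\}$, at least one of $(a,b)$ or $(b,a)$ occurs in $S$. These occurrences are distinct positions of $S$, because a single pair $(x,y)$ meets only the unordered pair $\{x,y\}$. Therefore $|S|\ge\binom{n}{2}$.

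\textbf{Main obstacle.} The lower bound is essentially immediate. The only delicate step is the orientation in the tightness part, that is, confirming that some orientation of a topological ordering gives a legitimate sequence while \Cref{lem:SReducesT} still applies. \Cref{lem:SReducesT} never uses the orientation of the pairs, so any orientation is acceptable. I would therefore orient each pair toward a leaf that survives until the end, for example a fixed leaf ordering in which the second coordinate is the leaf that is larger in the order. If this ordering cannot be made consistent, I would fall back on the paper's definition of sequence for orchard networks.
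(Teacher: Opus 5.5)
Your proof is the paper's proof. The lower bound comes from the $\binom{n}{2}$ two-leaf subtrees, each of which forces $(a,b)$ or $(b,a)$ into $S$ at its own position. Tightness comes from $S_O$ via \Cref{obs:cogAcyclic} and \Cref{lem:SReducesT}, and you are right that that lemma never uses the orientation of the pairs.

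The orientation issue you flag is real under the definition in \Cref{sec:preliminaries}, and the paper's proof does not address it. Your concrete rule, however, fails for an arbitrary leaf order. Let the root have children $1$ and $p$, where $p$ is the parent of $2$ and $3$. Every topological ordering of $G_T$ starts with $\{2,3\}$, so orienting by label gives $(2,3)(1,2)(1,3)$ or $(2,3)(1,3)(1,2)$. In the first, the pair $(1,2)$ breaks the condition: $2$ never reappears as a first coordinate and the final pair is $(1,3)$. In the second, the pair $(2,3)$ breaks it for the same reason, since the final pair is $(1,2)$.

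The rule can be repaired by tying the leaf order and the tail of the sequence to the tree. Pseudo-cherries whose lowest common ancestor is the root have no out-edges and no edges among themselves in $G_T$. So they may be placed after all other pseudo-cherries, in any order, and the result is still a topological ordering. Choose the leaf order so that the largest leaf $m$ lies below a child $c_1$ of the root and the second largest leaf $m'$ lies below a different child $c_2$. Orient every pair as (smaller, larger). End the sequence with the pairs $(y,m')$ for $y\in L[c_1]\setminus\{m\}$, then $(y,m)$ for $y\notin L[c_1]\cup\{m'\}$, and finally $(m',m)$.

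In this tail, only $m'$ and $m$ occur as second coordinates. Every other leaf occurs exactly once in the tail, as a first coordinate, after all of its occurrences as a second coordinate. The leaf $m'$ is the first coordinate of the final pair, and $m$ is its second coordinate. Hence the sequence is a valid cherry-picking sequence of length $\binom{n}{2}$.

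Your fallback, reading `cherry-picking sequence' as an arbitrary sequence of ordered pairs as in \Cref{sec:TCOrch}, is effectively what the paper's proof assumes without saying so.
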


\begin{proof}
    Let $S$ be a cherry-picking sequence of minimum length that reduces all trees in $\cD(T)$. 
    
    We first show that ~$|S| \ge \binom{n}{2}$.
    $S$ must reduce all pseudo-cherries. This means that every pseudo-cherry must appear in $S$ as an ordered pair, and there are $\binom{n}{2}$ pseudo-cherries in total. Therefore, ~$|S| \ge \binom{n}{2}$.

    Now we show that there is a sequence of length $\binom{n}{2}$ that reduces all trees of $\cD(T)$. Consider a topological ordering $O$ of $G_T$. By \Cref{lem:SReducesT}, $S_O$ reduces all trees in $\cD(T)$ and $|S_O| = \binom{n}{2}$, because $T$ is a tree on $n$ leaves.
\end{proof}

    

\section{Discussion}\label{sec:Discussion}
We summarize our main contributions and outline several questions that remain open.

\paragraph{Summary of results.}
In this paper, we sought to answer the following question. `Given a phylogenetic tree~$T$, what is its covering number -- the minimum number of cherry-picking sequences of~$T$ needed to reduce all subtrees of~$T$?'.
A key notion used for this computation was the \emph{survival covering number} $c'(T)$ of a tree~$T$.
This is a variant of the covering number $c(T)$ that imposes, in addition to the covering condition, the requirement that every subtree of $T$ be a surviving subtree with respect to some sequence in the covering set.
For binary trees, we gave a recursive formula for $c'(T)$ (\Cref{thm:surviving_covering_number} and \Cref{cor:surviving_covering_number}) and used it to derive a recursive formula for the covering number $c(T)$ (\Cref{thm:CoveringNumber}).
\yuki{In the proof, we also gave an} explicit construction for building an optimal covering set of sequences from optimal covers of the subtrees rooted at the children.

For non-binary trees~$T$, the survival covering number $c'(T)$ remains \taka{tractable}.
\Cref{thm:nonbin_surviving_covering_number} gives a recursive formula in terms of the leaf and non-leaf children of the root.
In particular, one can compute~$c'(T)$ 
using an appropriately chosen binary refinement of $T$ (\Cref{lem:sequence-of-binary-refinement-reduces-original-tree,lem:cover-non-binary-with-binary-refinement}).
This allows the binary result to be generalized to the non-binary setting.

\taka{By contrast, the (non-survival) covering number $c(T)$ of a non-binary tree~$T$ appears to be more difficult even for star trees.}
In \Cref{sec:NonBinCover}, we showed that determining $c(T^{\ast}(n))$ is equivalent to a new combinatorial problem called \textsc{SubsetConnectivity} (\Cref{thm:Equiv}).
For this problem, \yuki{we showed that at most one additional tree is needed if we increase~$n$ by one, i.e.,~$c(n+1) \in \{c(n),\, c(n)+1\}$ (\Cref{thm:cn-plus-one-or-cn}).
In the case that~$c(n+1) = c(n)$, we also showed that the solution to \textsc{SubsetConnectivity}$(n)$ must not contain an undirected star graph (\Cref{thm:nostar}).}

Finally, in \Cref{sec:Hybrid>cover}, we considered the version of \textsc{CoveringNumber}, where there is no restriction on the length of the sequence.
We used a topological ordering of the cherry order graph $G_T$ (\Cref{obs:cogAcyclic}) to show that the set of all non-trivial subtrees of a tree $T$ on $n$ leaves can be reduced by a single cherry-picking sequence of length exactly $\binom{n}{2}$.

\paragraph{Open problems.}
Our work suggests several natural directions for further study.
The most immediate open problem is to determine the covering number of general non-binary trees, beyond the surviving variant.
For this problem, we have only loose bounds for when the input is a star phylogenetic tree.
\taka{
In this setting, we have verified computationally confirmed that~$c(n) =n-2$ whenever~$4\le n \le 7$ by using an exact algorithm based on a set cover solver.
We wonder if this equality holds for larger~$n$, or if the behavior changes for certain values of~$n$.}
Equivalently, we ask whether~$c(n+1) = c(n)$ for some~$n\ge 4$.

In another direction, it is natural to ask whether the covering number framework extends from trees to networks.
\textsc{CoveringNumber} has two inputs -- a network class and a family of subnetworks. in this paper, we considered trees and the set of all subtrees.
For future research, one can consider a generalization of either or both inputs.

We consider changing the first input from trees to networks.
This has partially been addressed in \Cref{sec:TCOrch}, where we asked the question for orchard networks and their subtrees and for tree-child networks and their subtrees. 
If~$N$ is an orchard, there may not be a cherry-picking sequence of~$N$ which reduces a given subtree (\Cref{fig:CPNTreeContainmentCounterExample}); if~$N$ is tree-child, there may not be a tree-child sequence of~$N$ which reduces a given subtree (\Cref{fig:TCNTreeContainmentCounterExample}).
For the former, it would be interesting to characterize orchard networks with finite covering number.
For the latter, one can ask if, for every subtree of~$N$, there exists a cherry-picking sequence of~$N$ which reduces it (since cherry-picking sequences are more general than tree-child sequences).
The example given in \Cref{fig:TCNTreeContainmentCounterExample} is not a counterexample to this question, as the sequence~$(2,3)(5,2)(3,2)(4,3)(1,2)(2,3)$ of~$N$ reduces~$T$.
Moreover, whenever the covering number for the network inputs is known to be finite, can we still compute it in polynomial time?

One can also modify the second input -- instead of the set of all subtrees, one can consider any family of subnetworks such as the set of all tree-child subnetworks or the set of all orchard subnetworks.
Here, we restrict to tree-child or orchard subnetworks as otherwise such subnetworks need not be reducible by cherry-picking sequences.

Finally, it would be of great interest to consider different combinations for the two inputs, to determine whether \textsc{CoveringNumber} remains tractable or becomes intractable for general choices of an input network and a family of subnetworks.

\section*{Acknowledgement}
We thank Marit Dee and Momoko Hayamizu for helpful discussions.

\printbibliography

\appendix
\section{Covering numbers for tree-child and orchard are non-finite}\label{sec:TCOrch}

In this section, we show that the covering number of a network may not be finite if we consider the classes of orchard and tree-child networks.
We start by stating definitions.


\taka{Let $N$ be a network, let $(x, y)$ be an ordered pair of distinct leaves of $N$, and let $p_x, p_y$ denote the parents of $x, y$, respectively. 
Cherries in a network and the reduction of a cherry are defined analogously to those in a tree (so as before, if~$p_x=p_y$,~$(x,y)$ is a cherry; see \Cref{sec:preliminaries}).
We call $(x,y)$ a \emph{reticulated cherry} of $N$ if $p_x$ is a reticulation, $p_y$ is a tree-vertex, and $p_y$ is a parent of $p_x$. 
We \emph{reduce} a reticulated cherry~$(x,y)$ in $N$ by deleting the edge~$(p_y, p_x)$ and suppressing any degree-2 vertices.
We call the resulting network~$N(x,y)$.
Let $S$ be a sequence of ordered pairs on distinct elements in $X$. 
Then~$NS$ is the network obtained by iteratively applying the elements of~$S$ to~$N$. 
We say that $S$ \emph{reduces} $N$ if $NS$ is a tree with a single leaf.
}

\taka{
A network $N$ on $X$ is \emph{orchard} if $N$ can be reduced by some cherry-picking sequence on $X$.
A cherry-picking sequence $S = (x_1,y_1)(x_2,y_2)\cdots(x_k,y_k)$ is \emph{minimal} for an orchard network $N$ if
\[N(x_1,y_1)(x_2,y_2)\cdots(x_{i-1},y_{i-1})\not\cong N(x_1,y_1)(x_2,y_2)\cdots(x_{i},y_{i})\]
for all~$i\in[|S|]$.
A network $N$ on $X$ is \emph{tree-child} if every non-leaf vertex of $N$ has at least one child which is a tree-vertex or a leaf.}

One can also characterize tree-child networks based on the type of cherry-picking sequences which reduce them.
We call a cherry-picking sequence $S$ a \emph{tree-child sequence (TCS)} on $X$ if the first coordinate of each ordered pair is not used as a second coordinate in the remainder of~$S$.
\taka{A network on~$X$ is tree-child if it can be reduced by some tree-child sequence on~$X$ (see Proposition 3 in~\cite{cherry_picking_and_network_containment}).}

\taka{
Similarly to the definition in \Cref{sec:preliminaries}, a set of minimal sequence $\mathcal{S}$ is said to \emph{cover} the set $\mathcal{N}$ of all subtrees of $N$ if each element of $\mathcal{N}$ is reduced by at least one element of $S$.
Then, it is natural to consider the covering numbers for tree-child networks and orchard networks as follows.
}
    Moreover, $S$ is called a \emph{tree-child sequence (TCS)} on $X$ if the first coordinate of each ordered pair is not used as a second coordinate in the remainder of~$S$.

\paragraph{Orchard and tree-child covering number.}
    For an orchard network $N$ on $X$, the \emph{orchard covering number} of $N$, written in $c_{or}(N)$, is the size of the smallest set of minimal CPSs of $N$ that covers the set of all subtrees of $N$. 
    For a tree-child network $N$ on $X$, the \emph{tree-child covering number} of $N$, written in $c_{tc}(N)$, is the size of the smallest set of minimal TCSs of $N$ that covers the set of all subtrees of $N$.



Interestingly, these numbers need not be finite.
It has been previously shown by \cite{cherry_picking_and_network_containment} that there is an orchard network that has a subtree on the same leaf set that is not reducible by any sequence of the network (see \Cref{fig:CPNTreeContainmentCounterExample}). It immediately follows from this result that the orchard covering number may not be finite for certain orchard networks.

\begin{observation}\label{obs:cor(N)infinite}
    Let~$N$ and~$T$ be the network and tree in \Cref{fig:CPNTreeContainmentCounterExample}, respectively.
    No CPS of~$N$ reduces~$T$.
    In particular, this means~$c_{or}(N)$ is not finite.
\end{observation}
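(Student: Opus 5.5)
The plan is to argue directly from the structure of $N$ in \Cref{fig:CPNTreeContainmentCounterExample}, since the statement is a concrete claim about a fixed pair $(N,T)$ (it is Figure~13 of~\cite{cherry_picking_and_network_containment}, and I expect their argument to be of this kind). First I would read off $N$ explicitly. Its reticulations are the parent of $2$ (with parents the parent of the pair $\{2\text{-ret},3\text{-side}\}$ block and the parent of $3$), the reticulation above the parent of $3$ (with one parent on the $\{1\}$-side and one on the $\{4\}$-side), the reticulation above the parent of $5$ (with one parent on the $\{4\}$-side and one on the $\{7\}$-side), and the parent of $6$. The tree $T$ is $((1,(2,3)),4),((5,6),7)$.

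I would then determine which pairs can act on $N$. Initially the only cherries or reticulated cherries of $N$ are $(2,3)$ and $(6,5)$. Any cherry-picking sequence $S$ of $N$ must therefore begin with one of these, possibly interleaved with pairs that do not affect $N$. Minimality of $S$ excludes such idle pairs, so every pair of $S$ changes $N$. Reducing $(2,3)$ deletes the edge from the parent of $3$ to the parent of $2$. After that, $2$ hangs off the $\{1\}$-side, and the next available pairs become $(3,\cdot)$ or $(2,\cdot)$ reticulated cherries toward the $\{4\}$-side or toward $1$. The pair $(6,5)$ behaves analogously on the right.

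Next I would set up the invariant on $T$. By \Cref{lem:Subsequence} and \Cref{lem:reduce-subtrees}, applied to the intermediate trees, $S$ reduces $T$ only if several conditions hold. The pendant subtrees $T|_{\{2,3\}}$, $T|_{\{1,2,3\}}$, $T|_{\{1,2,3,4\}}$, $T|_{\{5,6\}}$ and $T|_{\{5,6,7\}}$ must each be reduced, in an order compatible with $T$. In particular, a pair joining $\{2,3\}$ to $4$, or $\{5,6\}$ to $4$, must not remove the surviving leaf prematurely. I would then trace the finitely many ways $N$ can be reduced, branching on the order in which the four reticulations are resolved. The aim is to show each branch forces one of two violations. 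In the first, the survivor of $\{2,3\}$ is paired with $4$ (via the reticulation between the $1$-side and $4$-side) before $1$ is absorbed. In the second, the survivor of $\{5,6\}$ is paired with $4$ before $7$ is absorbed. Either violation breaks the survival condition of \Cref{lem:subtree-must-survive-to-reduce-whole-tree} for the pendant subtree containing it. The crux of the construction is the symmetric gadget in $N$: the middle vertex on the $4$-side feeds both reticulations. As a result, $4$ cannot be "freed" to join the $\{1,2,3\}$-block without first being used in a reticulated cherry with leaves from both sides.

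The main obstacle is making this case analysis exhaustive and clean. Orchard sequences may revisit leaves as second coordinates many times, so the branching is not obviously finite in a useful way. To control it, I would use minimality: every pair must change the network, so each reticulated cherry reduction strictly lowers the number of reticulation edges, and there are only four. This bounds the reticulation-removing pairs and makes the branching finite. As a fallback, the finite case analysis could be verified by exhaustive computer search over minimal sequences. Finally, since no sequence of $N$ reduces $T$, no set of minimal CPSs covers all subtrees of $N$, so $c_{or}(N)$ is not finite.
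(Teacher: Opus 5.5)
The paper gives no argument of its own here: it simply appeals to Theorem~4 of \cite{cherry_picking_and_network_containment}, which is where the example comes from. A self-contained check is a legitimate alternative. You are also right to work with minimal sequences, as in the definition of $c_{or}$: if pairs leaving $N$ unchanged were allowed, interleaving the idle pairs $(1,3),(3,4),(5,7),(7,4)$ with $(2,3)$ and $(6,5)$ would reduce $T$.

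As written, however, the proposal is a plan rather than a proof. The decisive claim, that every branch forces one of your two violations, is only announced. The tools you plan to use also do not transfer. \Cref{lem:Subsequence,lem:subtree-must-survive-to-reduce-whole-tree} are proved for sequences of a \emph{tree} of length $|L|-1$, in which each leaf is a first coordinate at most once (this is Fact~1, resp.\ the step ``$(a,b)$ would not affect $T$''). In a CPS of an orchard network, a leaf can be the first coordinate of several reticulated-cherry reductions.

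Several announced intermediate facts are wrong. After $(2,3)$ the parent of $2$ is a tree vertex, so there is no $(2,\cdot)$ reticulated cherry, and the only new pair is $(3,2)$. A pair $(3,\cdot)$ toward the $4$-side exists only after $(6,5)$ and $(5,6)$, and it is $(3,5)$. Nor is the failure specifically that a survivor gets ``paired with $4$''. The right invariant is this: some leaf of the current $T$ is deleted from $N$ by a pair that is not a cherry of the current $T$, after which minimality forbids that leaf from ever reappearing.

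The idea that makes the analysis short is the vertex you point to but do not use. Let $w$ be the sibling of $4$. Its children are the reticulation $r_3$ above $3$ (whose other parent lies below $p_1$) and the reticulation $r_5$ above $5$ (whose other parent lies below $p_7$). An edge out of $w$ can only be deleted by a reticulated cherry $(x,y)$ with $p_y=w$, which needs a leaf child of $w$. Hence whichever of $r_3,r_5$ is resolved first loses its \emph{other} in-edge. This forces either $(2,3)$ and later $(3,2)$, or $(6,5)$ and later $(5,6)$. Before that moment, neither $\{1,3\}$ nor $\{5,7\}$ can be picked.

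In the first case, $3$ becomes a child of $w$. It now has only tree-vertex ancestors, and those it shares with $1$ lie strictly above $p_4$. Since ancestor sets never grow, $p_4$ can only be suppressed by deleting $4$ (or $3$). So $\{1,3\}$ cannot be picked while $4$ is in $N$, although $T$ needs $\{1,3\}$ to be picked before $4$.

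In the second case, $5$ becomes a child of $w$, and its only common ancestor with $7$ is the root. So $\{5,7\}$ can be picked only when $N$ is the cherry on $\{5,7\}$, i.e.\ after the surviving leaf of the left block of $T$ has left $N$.

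If neither resolution ever happens, $\{1,3\}$ is never picked. Combined with the invariant above, this closes all cases without any exhaustive search.
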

\begin{proof}
    That no CPS of~$N$ reduces~$T$ follows from Theorem 4 of~\cite{cherry_picking_and_network_containment}.
\end{proof}

In a similar vein, the tree-child covering number may not be finite for certain tree-child networks (see \Cref{fig:TCNTreeContainmentCounterExample}).

\begin{observation}\label{obs:ctc(N)infinite}
    Let~$N$ and~$T$ be the network and tree in \Cref{fig:TCNTreeContainmentCounterExample}, respectively.
    No tree-child sequence of~$N$ reduces~$T$.
    In particular, this means~$c_{tc}(N)$ is not finite.
\end{observation}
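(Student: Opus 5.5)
The plan is to argue by exhaustive case analysis on the possible minimal tree-child sequences of $N$, keeping track of the network after each pair and of the restriction of the partial sequence to the leaf set $\{1,2,3\}$ of $T$. I read the network from \Cref{fig:TCNTreeContainmentCounterExample} as follows. The root has children $p_1$ and $p_4$. The vertex $p_1$ has children $1$ and $u$. The vertex $p_4$ has children $4$ and the reticulation $r_1$. The vertex $u$ has children $r_1$ and $p_5$. The reticulation $r_1$ has a single child $p_3$, whose children are $3$ and the reticulation $r_2$. The vertex $p_5$ has children $5$ and $r_2$. Finally, $r_2$ is the parent of $2$.

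First I would isolate a reduction criterion for $T$, in the spirit of \Cref{lem:Subsequence}. Since $T=((1,2),3)$, a sequence $S$ reduces $T$ if and only if the first pair of $S$ with both coordinates in $\{1,2,3\}$ is $(1,2)$ or $(2,1)$. In addition, a later pair must join $3$ with the survivor of that pair, where the survivor is the second coordinate. Pairs that involve only one leaf of $\{1,2,3\}$ do not affect $T$, except that a leaf appearing as a first coordinate in such a pair is deleted from $T$ as well. I would spell out this last point as a separate observation, since it rules out any such pair with first coordinate in $\{1,2,3\}$.

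Second, I would use minimality: every pair must reduce a cherry or a reticulated cherry of the current network. Initially the only candidates are the reticulated cherries $(2,3)$ and $(2,5)$. The pair $(2,3)$ is fatal, since it pairs $2$ with $3$ before $1$ and $2$ have met, so $S$ must start with $(2,5)$. After $(2,5)$ the leaf $2$ hangs below $p_3$ as a sibling of $3$, so $(2,3)$ and $(3,2)$ become genuine cherries. The only other available move is the reticulated cherry $(3,4)$, because $p_3$ now hangs under the reticulation $r_1$ whose other parent is $p_4$. Two other routes must also be excluded. Routing through $u$ via $(3,5)$-type pairs is impossible, because the first-coordinate condition of a tree-child sequence (once $5$ is used as a second coordinate it may still be picked, but $2$ and $3$ constrain it) must be checked case by case. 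Deleting $5$ or $4$ would drop a leaf that is needed later to detach a reticulation edge.

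Third, I would follow each branch until either some pair in $\{1,2,3\}$ other than $(1,2)$ or $(2,1)$ is forced first, or a leaf of $\{1,2,3\}$ is forced to appear as a first coordinate and is thus deleted, or the tree-child condition is violated. A violation here means that a leaf already used as a first coordinate would later have to appear as a second coordinate, for example $5$ or $4$ being needed again as the partner that removes an edge into $r_1$. The intuition is that for $1$ and $2$ to form a cherry, both reticulation edges $(p_5,r_2)$ and $(p_4,r_1)$ must be cut in the right way. Cutting them requires $2$ and $3$ to serve as first coordinates in reticulated cherries in some order, which either deletes one of them or merges $2$ with $3$ too early.

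I expect the main obstacle to be keeping this branching finite and rigorous. In a network, unlike in a tree, a leaf can be a first coordinate several times through reticulated cherries, so the state space is larger. My approach is to track, at each step, only the set of leaves still usable as second coordinates together with the current parent structure of $2$ and $3$. I would show that every branch reaches one of the three fatal situations above within a handful of steps.
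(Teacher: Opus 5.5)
Your reading of $N$ is correct, and you are right to insist on minimality. It is essential: the non-minimal tree-child sequence $(1,2)(2,5)(2,3)(3,4)(3,5)(1,5)(4,5)$ reduces both $N$ and $T$. However, the argument has a genuine gap, and several of its concrete steps are wrong.

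First, your reduction criterion for $T$ is false. A pair inside $\{1,2,3\}$ that is not a cherry of $T$, such as $(2,3)$, simply does not affect $T$. For example, the sequence $(2,3)(5,2)(3,2)(4,3)(1,2)(2,3)$ quoted in the Discussion reduces $T$. So does the tree-child sequence $(2,3)(2,1)(3,1)$ on $\{1,2,3\}$. The only valid necessary condition is that $S$ contains $(1,2)$ or $(2,1)$, because $\{1,2\}$ is the unique cherry of $T$.

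Likewise $T(2,5)\cong T$. The pair $(2,5)$ does not delete $2$ from $T$, and since it reduces a reticulated cherry, it does not delete $2$ from $N$ either. Taken literally, your ``separate observation'' would kill the very $(2,5)$ branch you go on to explore. So your dismissal of the $(2,3)$ branch is not justified as written.

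Second, after $(2,5)$ the pair $(3,4)$ is not available. The parent of $3$ is still the tree vertex $p_3$, with children $3$ and $2$; it is not a reticulation. The only possible moves at that point are $(2,3)$ and $(3,2)$.

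The missing idea is to apply the tree-child condition to the leaf $2$, not to $4$ or $5$. In either branch, $2$ is the first coordinate of the first pair, so it may never again occur as a second coordinate. This excludes $(1,2)$ outright. It also excludes $(5,2)$ after $(2,3)$, and $(3,2)$ after $(2,5)$.

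After the first pair, $2$ sits in an ordinary cherry with $5$ (respectively $3$). No other reduction can change the children of that cherry's parent. The cherry must therefore eventually be picked as $(2,5)$ (respectively $(2,3)$), which deletes $2$ from $N$ while its sibling is still $5$ (respectively $3$). By minimality, no later pair involves $2$, so $(2,1)$ never occurs either, and $T$ is not reduced.

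This is the paper's short argument, and it requires no branching at all. Your plan instead defers exactly this step to an unspecified case-by-case check. The obstructions you point to there (needing $4$ or $5$ again) are not the ones that actually block the sequence.
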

\begin{proof}
    The first element of any TCS must be either $(2,3)$ or $(2,5)$. Then the leaf $2$ will form a cherry with $5$ or $3$, respectively. This means, $2$ must be picked as a first coordinate with either $5$ or $3$ in the rest of the sequence as a cherry. So $(1,2)$ or $(2,1)$ cannot appear in any TCS of $N$. However, in order for $T$ to be reduced, $(1,2)$ or $(2,1)$ must appear in the sequence.
\end{proof}


We note that Corollary 3 in~\cite{cherry_picking_and_network_containment} says that, for any tree-child network $N$, any subtree of $N$ with the same leaf-set is reduced by any TCS of $N$.
\Cref{obs:ctc(N)infinite} shows that the claim no longer holds when considering subtrees on different leaf-sets.

\end{document}